\documentclass[11pt,reqno]{amsart}
\usepackage{amsmath,amssymb,mathrsfs}
\usepackage{graphicx,cite}
\usepackage{subfigure}
\usepackage{epstopdf}
\usepackage{graphics} 
\usepackage{epsfig} 
\usepackage{tikz}
\usepackage{paralist}
\usepackage{booktabs}
\usepackage{enumerate}
\usepackage{enumitem}
\usepackage{mdwlist}

\setlength{\topmargin}{-1.5cm}
\setlength{\oddsidemargin}{0.0cm}
\setlength{\evensidemargin}{0.0cm}
\setlength{\textwidth}{16.7cm}
\setlength{\textheight}{23cm}
\headheight 20pt
\headsep    26pt
\footskip 0.4in

\setlength{\itemsep}{0pt}
\setlength{\parsep}{0pt}
\setlength{\parskip}{2pt}

\newtheorem{theorem}{Theorem}[section]

\newtheorem{lemma}[theorem]{Lemma}

\newtheorem{remark}[theorem]{Remark}

\numberwithin{equation}{section}
\begin{document}

\title[A BIM for the elastic scattering problem]{A highly
accurate boundary integral method for the elastic obstacle scattering problem}

\author{Heping Dong}
\address{School of Mathematics, Jilin University, Changchun,  Jilin 130012,  China}
\email{dhp@jlu.edu.cn}

\author{Jun Lai}
\address{School of Mathematical Sciences, Zhejiang University,
	Hangzhou, Zhejiang 310027, China}
\email{laijun6@zju.edu.cn}

\author{Peijun Li}
\address{Department of Mathematics, Purdue University, West Lafayette, Indiana
	47907, USA}
\email{lipeijun@math.purdue.edu}


\subjclass[2010]{65N38, 65R20, 45L05, 45P05}

\keywords{elastic wave scattering, boundary integral equation, collocation
method, Helmholtz decomposition, convergence analysis}

\begin{abstract}
Consider the scattering of a time-harmonic plane wave by a rigid obstacle
embedded in a homogeneous and isotropic elastic medium in two dimensions. In
this paper, a novel boundary integral formulation is proposed and its highly
accurate numerical method is developed for the elastic obstacle
scattering problem. More specifically, based on the Helmholtz
decomposition, the model problem is reduced to a coupled boundary integral
equation with singular kernels. A regularized system is constructed in order to
handle the degenerated integral operators. The semi-discrete and full-discrete
schemes are studied for the boundary integral system by using the trigonometric
collocation method. Convergence is established for the numerical schemes in
some appropriate Sobolev spaces. Numerical experiments are presented for both
smooth and nonsmooth obstacles to demonstrate the superior performance of the
proposed method.
\end{abstract}

\maketitle

\section{Introduction}

The phenomena of elastic scattering by obstacles have received ever-increasing
attention due to the significant applications in diverse scientific areas such
as geological exploration, nondestructive testing, and medical diagnostics
\cite{LL-86, ABG-15}. The scattering problems for elastic waves have been
extensively studied; there are many mathematical and computational results
available for both the direct and inverse scattering problems \cite{AH-SIAP76,
HKS-IP13, L-SIAP12, PV-JASA, TC2007}. It has played an important role to have an
accurate and efficient numerical method in many of these applications since a
large number of forward simulations are often required. Due to the coexistence
of compressional and shear wave components with different wavenumbers, the
propagation of elastic waves governed by the Navier equation is much more
complicated than that of acoustic waves governed by the Helmholtz equation. This
paper is concerned with the scattering of a time-harmonic plane wave by a rigid
obstacle embedded in a homogeneous and isotropic elastic medium in two
dimensions. We propose a novel boundary integral formulation and develop a
highly accurate numerical method for solving the elastic obstacle scattering
problem.

Given the importance of elasticity, various numerical methods have been
proposed to solve the associated scattering problems in the literature.
Conventional methods include the finite difference and finite element methods. 
Despite being successful to deal with media with general properties and
geometries, they require the discretization of the whole computational domain
and encounter the issue of domain truncation by adding some artificial absorbing
boundary layers \cite{GK-WM90}. The method of boundary integral equations
offers an attractive alternative for solving the exterior boundary value
problems such as the obstacle scattering problems. It only requires the
discretization of boundary of the domain and satisfies the radiation condition
exactly \cite{PV-JASA}, but it does require the knowledge of Green's function
for the governing equation. As is known, the Green function of the elastic wave
equation is a second order tensor and is complicated to be applied in the
computation of boundary integral equations. Readers are referred to
\cite{BXY-JCP17, BLR-JCP14} and reference therein for some recent advances along
this direction. To bypass this complexity, we introduce two scalar potential
functions and use the Helmholtz decomposition to split the displacement of the
elastic wave field into the compressional wave and the shear wave. The two wave
components, both of which satisfy the two-dimensional Helmholtz equation
\cite{DLL2019, LL2019, YLLY-CiCP18}, are only coupled at the boundary of
obstacle. Therefore, the boundary value problem of the Navier equation is
converted equivalently into a coupled boundary value problem of the Helmholtz
equations for the potentials. Compared to the formulation based on the second
order tensor elastic Green's function, such a decomposition reduce greatly the
complexity for the computation of the elastic scattering problem. Similar
techniques have also been successfully applied to the equations of unsteady and
incompressible flow \cite{GJ2019}.

Another goal of this work is to carry on the convergence analysis of a high
order numerical discretization for the boundary integral system. Numerical
discretization for boundary integral equations requires special quadratures due
to singular integral kernels \cite{Alpert1999}. The quadrature methods for the
logarithmic and hypersingular integral equations were proposed in \cite{KS1993,
Kress1995} to solve the sound-soft and sound-hard obstacle scattering problems,
where the error analysis was done in the Sobolev space and H\"{o}lder space,
respectively. As an improvement of the quadrature method for the sound-hard
obstacle scattering problem, based on the trigonometric differentiation to
discrete the principal part of the hypersingular operator, a fully discrete
collocation method was proposed in \cite{Kress2014} and the convergence in a
Sobolev space setting was also proven. In \cite{SV1996}, the authors showed an
error analysis by using the trigonometric collocation method for the boundary
integral equation which contains more general singular integral operators. A
Galerkin boundary element method with a regularization for the hypersingular
integral was developed in \cite{BXY-JCP17} to solve the two-dimensional elastic
scattering problem. In \cite{KirschRitter1999}, the problem for bending of an
elastic plate with the Dirichlet boundary conditions was studied. An explicit
equivalent regularizer was constructed for the Fredholm integral equation of
second kind to derive existence and uniqueness results in an appropriate Sobolev
space. A high order spectral algorithm was developed in \cite{L2014} for the
three-dimensional elastic obstacle scattering problem with the Dirichlet or
Neumann boundary condition. A Nystr\"{o}m method with a local correction scheme
was shown in \cite{TC2007} for the elastic obstacle scattering problem in three
dimensions. We refer to \cite{MP-book1986} for a comprehensive account of 
the singular integral equations.

In this work, by using the Helmholtz decomposition, the exterior boundary
value problem of the elastic obstacle scattering is reduced to a coupled
boundary integral equation with the Cauchy type singular integral
operators. Based on the recent works \cite{DLL2019, DLL2020, LL2019},
we introduce an appropriate regularizer to the boundary integral system and
split the singular integral operator in the form of an isomorphic operator plus
a compact one, which enables us to derive the convergence result in some Sobolev
spaces. The semi-discrete and full-discrete schemes are examined for the
boundary integral system under the framework of trigonometric collocation
method. We deduce the error estimates for both the semi- and full-discrete
schemes and show that the numerical solution of the integral system
converges to the exact solution. In particular, we demonstrate that the proposed
scheme converges exponentially fast when the boundary of the obstacle and the
incident wave are analytic. Numerical experiments for both smooth and nonsmooth
obstacles are provided to confirm our theoretical analysis. We point out that
the proposed method is able to achieve a very high precision even for boundaries
with corners by using the graded meshes \cite{AOT2012, DR-shu2, Kress1990}. It
is also worth mentioning that our method is extremely fast since the
full-discrete scheme is established via simple quadrature operators. Most
importantly, we only need to solve the scalar Helmholtz equation instead of
solving the vector Navier equation. This feature makes the approach particularly
attractive as anyone, who has the code available to solve the acoustic obstacle
scattering problem, is able to adapt to solve the elastic obstacle scattering
problem. The application of this formulation to elastic
multi-particle scattering with fast multipole method and inverse elastic obstacle problem have been
thoroughly investigated in \cite{LL2019, DLL2019}.

This paper concerns both the theoretical analysis and numerical computation for
the elastic obstacle scattering problem. The work contains three contributions: 

\begin{enumerate}
\item propose a novel boundary integral formulation by introducing a regularizer
to the integral system obtained by applying the Helmholtz decomposition to the
Navier equation; 

\item  establish the convergence of the semi- and full-discrete schemes of the
boundary integral system via the trigonometric collocation method;

\item demonstrate the superior numerical performance by presenting examples of
smooth and nonsmooth obstacles.
\end{enumerate}

The paper is organized as follows. In Section 2, we introduce the problem
formulation. Section 3 presents the boundary integral equations, gives the
decomposition of integral operators, and deduces an operator equation in
form of an isomorphic operator plus a compact one. Section 4 is devoted to the
convergence analysis of the semi-discrete and full-discrete schemes for the
boundary integral system via the trigonometric collocation method. Numerical
experiments are presented to verify the theoretical findings in Section 5. The
paper is concluded with some general remarks and discussions on the future work
in Section 6.

\section{Problem formulation}

Consider a two-dimensional elastically rigid obstacle, which is described as a
bounded domain $D\subset\mathbb R^2$ with an analytic boundary $\Gamma_D$.
Denote by $\nu=(\nu_1, \nu_2)^\top$ and $\tau=(\tau_1, \tau_2)^\top$ the unit
normal and tangential vectors on $\Gamma_D$, respectively, where $\tau_1=-\nu_2,
\tau_2=\nu_1$. The exterior domain $\mathbb{R}^2\setminus \overline{D}$ is
assumed to be filled with a homogeneous and isotropic elastic medium with a unit
mass density.

Let the obstacle be illuminated by a time-harmonic compressional plane wave
$\boldsymbol{u}^{\rm inc}(x)=d \mathrm{e}^{\mathrm{i} \kappa_{\mathfrak p}d\cdot
x}$ or shear plane wave $\boldsymbol{u}^{\rm inc}(x)=d^{\perp} 
\mathrm{e}^{\mathrm{i}\kappa_{\mathfrak s} d\cdot x}$,
where $d=(\cos\theta, \sin\theta)^\top$ is
the unit propagation direction vector, $\theta\in [0, 2\pi)$ is the incident
angle, $d^{\perp}=(-\sin\theta, \cos\theta)^\top$ is an orthonormal vector of
$d$,
and
\[
\kappa_{\mathfrak p}=\frac{\omega}{\sqrt{\lambda+2\mu}},\quad
\kappa_{\mathfrak s}=\frac{\omega}{\sqrt{\mu}}
\]
are the compressional wavenumber and the shear wavenumber, respectively.

The displacement of the total field $\boldsymbol{u}$ satisfies the Navier
equation
\[
\mu\Delta\boldsymbol{u}+(\lambda+\mu)\nabla\nabla\cdot\boldsymbol{u}
+\omega^2\boldsymbol{u}=0\quad {\rm in}\, \mathbb{R}^2\setminus \overline{D}, 	
\]
where $\omega>0$ is the angular frequency and $\lambda, \mu$ are the
Lam\'{e} constants satisfying $\mu>0, \lambda+\mu>0$. Since the obstacle is
assumed to be rigid, the total field $\boldsymbol u$ satisfies the homogeneous
boundary condition 
\[
\boldsymbol{u}=0\quad {\rm on}~\Gamma_D.
\]
The total field $\boldsymbol u$ consists of the incident field $\boldsymbol
u^{\rm inc}$ and the scattered field $\boldsymbol v$, i.e., 
\[
\boldsymbol u=\boldsymbol u^{\rm inc}+\boldsymbol v. 
\]
It is easy to verify that the scattered field $\boldsymbol v$ satisfies
the boundary value problem
\begin{equation}\label{scatteredfield}
\begin{cases}
\mu\Delta\boldsymbol{v}+(\lambda+\mu)\nabla\nabla\cdot\boldsymbol{v}
+\omega^2\boldsymbol{v}=0\quad &{\rm in}~
\mathbb{R}^2\setminus\overline{D},\\
\boldsymbol{v}=-\boldsymbol{u}^{\rm inc}\quad &{\rm on}~\Gamma_D.
\end{cases}
\end{equation}
In addition, the scattered field $\boldsymbol v$ is required to satisfy
the Kupradze--Sommerfeld radiation condition
\[
\lim_{\rho\to\infty}\rho^{\frac{1}{2}}(\partial_\rho\boldsymbol v_{\mathfrak p}-\mathrm{i}\kappa_{\mathfrak p}\boldsymbol v_{\mathfrak p})=0,\quad
\lim_{\rho\to\infty}\rho^{\frac{1}{2}}(\partial_\rho\boldsymbol v_{\mathfrak s}-\mathrm{i}\kappa_{\mathfrak s}\boldsymbol v_{\mathfrak s})=0,\quad \rho=|x|,
\]
where
\[
\boldsymbol v_{\mathfrak p}=-\frac{1}{\kappa_{\mathfrak p}^2}\nabla\nabla\cdot\boldsymbol
v,\quad \boldsymbol v_{\mathfrak s}=\frac{1}{\kappa_{\mathfrak s}^2}{\bf curl}{\rm
curl}\boldsymbol v,
\]
are known as the compressional and shear wave components of $\boldsymbol v$,
respectively. Given a vector function $\boldsymbol v=(v_1, v_2)^\top$ and
a scalar function $v$, the scalar and vector curl operators are defined by
\[
{\rm curl}\boldsymbol v=\partial_{x_1}v_2-\partial_{x_2}v_1, \quad
{\bf curl}v=(\partial_{x_2}v, -\partial_{x_1}v)^\top.
\]

For any solution $\boldsymbol v$ of the elastic wave equation
\eqref{scatteredfield}, the Helmholtz decomposition reads
\begin{equation}\label{HelmDeco}
\boldsymbol{v}=\nabla\phi+\boldsymbol{\rm curl}\psi,
\end{equation}
where $\phi,\psi$ are two scalar functions. Combining
\eqref{scatteredfield} and \eqref{HelmDeco} yields the Helmholtz equations
\[
\Delta\phi+\kappa_{\mathfrak p}^{2}\phi=0, \quad \Delta\psi+\kappa_{\mathfrak s}^{2}\psi=0.
\]
As usual, $\phi$ and $\psi$ are required to satisfy the Sommerfeld radiation
conditions
\begin{equation*}
\lim_{\rho\to\infty}\rho^{\frac{1}{2}}(\partial_{\rho}
\phi-\mathrm{i}\kappa_{\mathfrak p}\phi)=0, \quad
\lim_{\rho\to\infty}\rho^{\frac{1}{2}}(\partial_{\rho}
\psi-\mathrm{i}\kappa_{\mathfrak s}\psi)=0, \quad \rho=|x|.
\end{equation*}
It follows from the Helmholtz decomposition and the boundary condition on
$\Gamma_D$ that
\[
\boldsymbol{v}=\nabla\phi+{\bf curl}\psi=-\boldsymbol{u}^{\rm
inc}.
\]
Taking the dot product of the above equation with $\tau$ and $\nu$,
respectively, we get
\[
\partial_\nu\phi+\partial_\tau\psi=f_1,\quad
\partial_\tau\phi-\partial_\nu\psi=f_2,
\]
where
\[
f_1=-\nu\cdot\boldsymbol{u}^{\rm inc},\quad
f_2=-\tau\cdot\boldsymbol{u}^{\rm inc}.
\]
In summary, the scalar potential functions $\phi, \psi$ satisfy the coupled
boundary value problem
\begin{align}\label{HelmholtzDec}
\begin{cases}
\Delta\phi+\kappa_{\mathfrak p}^{2}\phi=0, \quad \Delta\psi+\kappa_{\mathfrak s}^{2}\psi=0,
\quad &{\rm in} ~\mathbb{R}^2\setminus\overline{D},\\
\partial_\nu\phi+\partial_\tau\psi=f_1, \quad
\partial_\tau\phi-\partial_\nu\psi=f_2, \quad &{\rm on} ~ \Gamma_D,\\
\displaystyle{\lim_{\rho\to\infty}\rho^{\frac{1}{2}}(\partial_{\rho}\phi-\mathrm{i}\kappa_{\mathfrak p}\phi)=0}, \quad
\displaystyle{\lim_{\rho\to\infty}\rho^{\frac{1}{2}}(\partial_{\rho}\psi-\mathrm{i}\kappa_{\mathfrak s}\psi)=0}, \quad &\rho=|x|.
\end{cases}
\end{align}

It is well known that a radiating solution of \eqref{scatteredfield} has the
asymptotic behavior of the form
\begin{equation}\label{farf}
\boldsymbol{v}(x)=\frac{\mathrm{e}^{\mathrm{i}\kappa_{\mathfrak p}|x|}}{\sqrt{|x|}}\boldsymbol{v}_{\mathfrak p}^\infty(\hat{x})+\frac{\mathrm{e}^{\mathrm{i}\kappa_{\mathfrak s}|x|}}{\sqrt{|x|}}\boldsymbol{v}_{\mathfrak s}^\infty(\hat{x})+\mathcal{O}\left(\frac{1}{|x|^{\frac{3}{2}}}\right),
\quad |x|\to\infty
\end{equation}
uniformly in all directions $\hat{x}:=x/|x|$, where $\boldsymbol{v}_{\mathfrak
p}^\infty$ and $\boldsymbol{v}_{\mathfrak s}^\infty$, defined on
the unit circle $\Omega$, are known as the compressional and shear 
far-field patterns of $\boldsymbol{v}$, respectively. The following result
presents the relationship between the compressional (or shear) far-field pattern
of $\boldsymbol{v}$ and far-field patterns of $\phi$ (or $\psi$). The proof may
be found in \cite{DLL2019}. 

\begin{lemma}\label{Th1}
The far-field pattern \eqref{farf} for the radiating solution $\boldsymbol{v}$ to the Navier equation satisfies
\begin{equation}\label{behaviour relation}
\boldsymbol{v}_{\mathfrak p}^\infty(\hat{x}) =\mathrm{i}\kappa_{\mathfrak p}\phi_{\infty}(\hat{x})\hat{x}, \qquad \boldsymbol{v}_{\mathfrak s}^\infty(\hat{x})=-\mathrm{i}\kappa_{\mathfrak	s}\psi_{\infty}(\hat{x})\hat{x}^{\perp},
\end{equation}
where the complex-valued functions $\phi_\infty(\hat{x})$ and
$\psi_\infty(\hat{x})$ are the far-field patterns corresponding to $\phi$ and
$\psi$, respectively. 
\end{lemma}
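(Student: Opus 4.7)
The plan is to start from the far-field expansions of the two scalar potentials $\phi$ and $\psi$, which as radiating solutions of the Helmholtz equations in $\mathbb R^2$ with wavenumbers $\kappa_{\mathfrak p}$ and $\kappa_{\mathfrak s}$ admit the standard expansions
\[
\phi(x)=\frac{e^{\mathrm{i}\kappa_{\mathfrak p}|x|}}{\sqrt{|x|}}\phi_\infty(\hat x)+\mathcal{O}\!\left(|x|^{-3/2}\right),\qquad
\psi(x)=\frac{e^{\mathrm{i}\kappa_{\mathfrak s}|x|}}{\sqrt{|x|}}\psi_\infty(\hat x)+\mathcal{O}\!\left(|x|^{-3/2}\right),
\]
uniformly in $\hat x=x/|x|$. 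Since $\boldsymbol{v}=\nabla\phi+\mathbf{curl}\,\psi$, the task reduces to differentiating these expansions term by term and collecting the leading $|x|^{-1/2}$ contributions.

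Next, I would carry out the asymptotic differentiation: using $\partial_{x_j}|x|=\hat x_j$, $\partial_{x_j}|x|^{-1/2}=-\tfrac12\hat x_j|x|^{-3/2}$, and $\partial_{x_j}\hat x_k=\mathcal O(|x|^{-1})$, the only term of order $|x|^{-1/2}$ comes from differentiating the oscillating exponential. Hence
\[
\nabla\phi(x)=\mathrm{i}\kappa_{\mathfrak p}\,\hat x\,\frac{e^{\mathrm{i}\kappa_{\mathfrak p}|x|}}{\sqrt{|x|}}\phi_\infty(\hat x)+\mathcal{O}\!\left(|x|^{-3/2}\right),
\]
and, component-wise from $\mathbf{curl}\,\psi=(\partial_{x_2}\psi,-\partial_{x_1}\psi)^\top$,
\[
\mathbf{curl}\,\psi(x)=\mathrm{i}\kappa_{\mathfrak s}\,(\hat x_2,-\hat x_1)^\top\frac{e^{\mathrm{i}\kappa_{\mathfrak s}|x|}}{\sqrt{|x|}}\psi_\infty(\hat x)+\mathcal{O}\!\left(|x|^{-3/2}\right)
=-\mathrm{i}\kappa_{\mathfrak s}\,\hat x^{\perp}\,\frac{e^{\mathrm{i}\kappa_{\mathfrak s}|x|}}{\sqrt{|x|}}\psi_\infty(\hat x)+\mathcal{O}\!\left(|x|^{-3/2}\right),
\]
where the sign in the last equality uses the convention $\hat x^{\perp}=(-\hat x_2,\hat x_1)^\top$ consistent with the tangential/normal orientation fixed in Section~2.

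Finally, I would compare the sum $\nabla\phi+\mathbf{curl}\,\psi$ with the prescribed form \eqref{farf}. Since the two exponential profiles $e^{\mathrm{i}\kappa_{\mathfrak p}|x|}$ and $e^{\mathrm{i}\kappa_{\mathfrak s}|x|}$ have distinct wavenumbers $\kappa_{\mathfrak p}\neq\kappa_{\mathfrak s}$, their coefficients can be separated by inspection (or, more rigorously, by multiplying by $e^{-\mathrm{i}\kappa_{\mathfrak p}|x|}$ resp. $e^{-\mathrm{i}\kappa_{\mathfrak s}|x|}$ and letting $|x|\to\infty$ along suitable sequences, using that the oscillatory cross-term averages to zero to leading order). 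Matching coefficients term by term yields \eqref{behaviour relation}.

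The only delicate step is making the separation of the two wavenumber components rigorous; it is not enough to read coefficients off a sum of two oscillating terms without justification. Everything else is a routine exercise in asymptotic differentiation once the far-field expansions of $\phi$ and $\psi$ are in hand, so I expect the bulk of the argument to be short and to reduce, as stated, to a direct computation referenced in \cite{DLL2019}.
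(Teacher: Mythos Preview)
Your argument is correct in outline, and in fact the paper does not supply its own proof of this lemma: it simply cites \cite{DLL2019}. So there is no ``paper's proof'' to compare against beyond that reference, and your direct asymptotic-differentiation approach is the standard one.

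One remark: the step you flag as ``delicate'' --- separating the $e^{\mathrm{i}\kappa_{\mathfrak p}|x|}$ and $e^{\mathrm{i}\kappa_{\mathfrak s}|x|}$ contributions --- is in fact unnecessary. Recall that the compressional and shear parts are \emph{defined} by $\boldsymbol v_{\mathfrak p}=-\kappa_{\mathfrak p}^{-2}\nabla\nabla\cdot\boldsymbol v$ and $\boldsymbol v_{\mathfrak s}=\kappa_{\mathfrak s}^{-2}\mathbf{curl}\,\mathrm{curl}\,\boldsymbol v$. Applied to $\boldsymbol v=\nabla\phi+\mathbf{curl}\,\psi$ with $\Delta\phi=-\kappa_{\mathfrak p}^2\phi$ and $\Delta\psi=-\kappa_{\mathfrak s}^2\psi$, a two-line computation gives $\boldsymbol v_{\mathfrak p}=\nabla\phi$ and $\boldsymbol v_{\mathfrak s}=\mathbf{curl}\,\psi$ \emph{exactly}, not just asymptotically. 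Hence $\boldsymbol v_{\mathfrak p}^\infty$ is simply the far-field pattern of $\nabla\phi$ and $\boldsymbol v_{\mathfrak s}^\infty$ that of $\mathbf{curl}\,\psi$; no separation-of-oscillations argument is needed. The only point that genuinely requires care is the term-by-term differentiation of the far-field expansion of a radiating Helmholtz solution, which is standard (it follows, e.g., from the integral representation of $\phi,\psi$ as layer potentials).
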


By Lemma \ref{Th1} and the Helmholtz decomposition, it is clear that the elastic
scattered field $\boldsymbol{v}_{\mathfrak p}, \boldsymbol{v}_{\mathfrak s}$
and the corresponding far-field patterns $\boldsymbol{v}_{\mathfrak
p}^\infty, \boldsymbol{v}_{\mathfrak s} ^\infty$ can be obtained by solving the
coupled boundary value problem \eqref{HelmholtzDec}.


\section{Boundary integral equations}

In this section, a novel boundary integral formulation is proposed for the
coupled boundary value problem \eqref{HelmholtzDec}. In particular, a 
regularizer is constructed in order to handle the degenerated integral
operators.

\subsection{Coupled integral equations}

Denote the fundamental solution to the Helmholtz equation in two dimensions by
$$
\Phi(x,y;\kappa)=\frac{\mathrm{i}}{4}H_0^{(1)}(\kappa|x-y|), \quad x\neq y,
$$
where $H_0^{(1)}$ is the Hankel function of the first kind with order zero. 
We assume that the solution of \eqref{HelmholtzDec} is given as the following
single-layer potentials with densities $g_1, g_2$:
\begin{align}
\phi(x)=\int_{\Gamma_D}\Phi(x,y;\kappa_{\mathfrak p})g_1(y)\,\mathrm{d}s(y), \quad 
\psi(x)=\int_{\Gamma_D}\Phi(x,y;\kappa_{\mathfrak s})g_2(y)\,\mathrm{d}s(y), \quad 
x\in\mathbb{R}^2\setminus\Gamma_D. \label{singlelayer}
\end{align}

Letting $x\in\mathbb{R}^2\setminus\overline{D}$ approach the boundary $\Gamma_D$
in \eqref{singlelayer}, and using the jump relation of single-layer potentials
and the boundary condition of \eqref{HelmholtzDec}, we deduce for
$x\in\Gamma_D$ that 
\begin{align}
\begin{split}\label{boundaryIE}
-\frac{1}{2}g_1(x)+\int_{\Gamma_D}\frac{\partial\Phi(x,y;\kappa_{\mathfrak p})}
{\partial\nu(x)}g_1(y)\,\mathrm{d}s(y)+\int_{\Gamma_D}
\frac{\partial\Phi(x,y;\kappa_{\mathfrak s})}
{\partial\tau(x)}g_2(y)\,\mathrm{d}s(y)=-\nu(x)\cdot\boldsymbol{u}^{\rm inc}(x),\\
\int_{\Gamma_D}\frac{\partial\Phi(x,y;\kappa_{\mathfrak p})}
{\partial\tau(x)}g_1(y)\,\mathrm{d}s(y)+ \frac{1}{2}g_2(x)-\int_{\Gamma_D}
\frac{\partial\Phi(x,y;\kappa_{\mathfrak s})}
{\partial\nu(x)}g_2(y)\,\mathrm{d}s(y)=-\tau(x)\cdot\boldsymbol{u}^{\rm inc}(x). 
\end{split}
\end{align}
The corresponding far-field patterns can be represented by 
\begin{align} \label{singlelayer_far}
\phi_\infty(\hat{x})=\gamma_{\mathfrak p}\int_{\Gamma_D}\mathrm{e}^{-\mathrm{i}
\kappa_{\mathfrak p}\hat{x}\cdot y} g_1(y)\,\mathrm{d}s(y), \quad
\psi_\infty(\hat{x})=\gamma_{\mathfrak s}\int_{\Gamma_D}
\mathrm{e}^{-\mathrm{i}\kappa_{\mathfrak s} \hat{x}\cdot y} g_2(y)\,\mathrm{d}s(y),
\quad\hat{x}\in\Omega,
\end{align}
where $\gamma_\sigma= e^{\mathrm{i}\pi/4}/{\sqrt{8\kappa_\sigma\pi}}$ for
$\sigma={\mathfrak p}$ or ${\mathfrak s}$.

We introduce the single-layer integral operator and the corresponding far-field
integral operator expressed by
\[
(S^\sigma g)(x)=2\int_{\Gamma_D}\Phi(x,y;\kappa_\sigma)g(y)\,\mathrm{d}s(y), 
\quad (S^\sigma_\infty
g)(\hat{x})=\gamma_{\sigma}\int_{\Gamma_D}\mathrm{e}^{-\mathrm{i }\kappa_\sigma
\hat{x}\cdot y}g(y)\,\mathrm{d}s(y), \quad x\in\Gamma_D,\,\hat{x}\in\Omega.
\]
In addition, we introduce the normal derivative and the tangential
derivative boundary integral operators
\[
(K^\sigma g)(x)=2\int_{\Gamma_D}\frac{\partial\Phi(x,y;\kappa_\sigma)}
{\partial\nu(x)}g(y)\,\mathrm{d}s(y), 
\quad
(H^\sigma g)(x)=2\int_{\Gamma_D}\frac{\partial\Phi(x,y;\kappa_\sigma)}
{\partial\tau(x)}g(y)\,\mathrm{d}s(y), \quad x\in\Gamma_D.
\]
Note that the operators
$K^\sigma$ and $H^\sigma$ are defined in the sense of Cauchy principal value.
Based on the boundary integral operators, the coupled boundary integral
equations \eqref{boundaryIE} can be written into the operator
form
\begin{eqnarray}
\left\{
\begin{split}\label{direct field}
-g_1+K^{\mathfrak p}g_1+H^{\mathfrak s}g_2&=2f_1, \\ 
g_2+H^{\mathfrak p}g_1-K^{\mathfrak s}g_2&=2f_2.
\end{split}
\right.
\end{eqnarray}
Once the system \eqref{direct field} is solved for the densities $g_1$ and
$g_2$, the corresponding far-field patterns of \eqref{singlelayer_far} can be
represented as follows 
\begin{align}\label{farfield}
\phi_\infty(\hat{x})=(S^{\mathfrak p}_\infty g_1)(\hat{x}), \quad
\psi_\infty(\hat{x})=(S^{\mathfrak s}_\infty g_2)(\hat{x}),
\qquad\hat{x}\in\Omega. 
\end{align}

By \cite[Theorems 4.1 and 4.8]{LL2019}, the coupled system \eqref{direct field}
is uniquely solvable if neither $\kappa_{\mathfrak p}$ nor $\kappa_{\mathfrak
s}$ is the eigenvalue of the interior Dirichlet problem for the Helmholtz
equation in $D$. Throughout, we assume that this condition is satisfied so that
the system \eqref{direct field} admits a unique solution.


\subsection{Decomposition of the operators}

We assume that the boundary $\Gamma_D$ is an analytic curve with the parametric form
\begin{equation*}
\Gamma_D=\{z(t)=(z_1(t),z_2(t)): 0\leq t<2\pi\}, 
\end{equation*}
where $z: \mathbb{R}\rightarrow\mathbb{R}^2$
is analytic and $2\pi$-periodic with $|z'(t)|>0$ for all $t$. The parameterized
integral operators are still denoted by
$S^\sigma$, $S^\sigma_\infty$, $K^\sigma$, and $H^\sigma$ for convenience, i.e.,
\begin{align*}
(S^\sigma\varphi)(t)&=\frac{\mathrm{i}}{2}\int_0^{2\pi}
H_0^{(1)}(\kappa_\sigma|z(t)-z(\varsigma)|)\varphi(\varsigma)\,\mathrm{d}\varsigma,
&&(K^\sigma\varphi)(t)=\int_0^{2\pi}k^\sigma(t,\varsigma)\varphi(\varsigma)\,\mathrm{d}\varsigma,\\
(S_\infty^\sigma\varphi)(t)&=\gamma_{\sigma}\int_0^{2\pi}\mathrm{e}^{-\mathrm{i}\kappa_\sigma\hat{x}(t)\cdot	z(\varsigma)}\varphi(\varsigma)\,\mathrm{d}\varsigma,
&&(H^\sigma\varphi)(t)=\int_0^{2\pi}h^\sigma(t,\varsigma)\varphi(\varsigma)\,\mathrm{d}\varsigma,
\end{align*}
where 
\begin{align*}
k^\sigma(t,\varsigma)&=\frac{\mathrm{i}\kappa_\sigma}{2}\mathsf{n}(t)\cdot[z(\varsigma)-z(t)] \frac{H_1^{(1)}(\kappa_\sigma|z(t)-z(\varsigma)|)}{|z(t)-z(\varsigma)|},\\
h^\sigma(t,\varsigma)&=\frac{\mathrm{i}\kappa_\sigma}{2}\mathsf{n}^\perp(t)\cdot[z(\varsigma)-z(t)] \frac{H_1^{(1)}(\kappa_\sigma|z(t)-z(\varsigma)|)}{|z(t)-z(\varsigma)|},
\end{align*}
and 
\begin{align*}
\mathsf{n}(t)&\overset{\text{def}}{=}\tilde{\nu}(t)|z'(t)|=\big(z'_2(t), -z'_1(t)\big)^\top,
\quad\tilde{\nu}=\nu\circ z,\\
\mathsf{n}^\perp(t)&\overset{\text{def}}{=}\tilde{s}(t)|z'(t)|=\big(z'_1(t), z'_2(t)\big)^\top,
\quad\tilde{s}=\tau\circ z.
\end{align*}
Multiplying $|z'|$ on both sides of \eqref{direct field}, we obtain the
parametric form
\begin{align}\label{direct parafield}
\mathcal{A}\varphi\overset{\text{def}}{=}\left[     \begin{array}{cc}
-I+K^{\mathfrak p} & H^{\mathfrak s} \\ 
H^{\mathfrak p}    & I-K^{\mathfrak s}
\end{array}
\right]\left[                  
\begin{array}{c}
\varphi_1 \\ 
\varphi_2
\end{array}
\right]=\left[                  
\begin{array}{c}
w_1 \\ 
w_2
\end{array}
\right],
\end{align}
where $w_j=2(f_j\circ z)|z'|$, $\varphi_j=(g_j\circ z)|z'|$, $j=1,
2$, and $I$ is the identity operator.

The kernel $k^\sigma(t,\varsigma)$ of the parameterized normal derivative integral
operator can be written as
\[
k^\sigma(t,\varsigma)=k_1^\sigma(t,
\varsigma)\ln\Big(4\sin^2\frac{t-\varsigma}{2}\Big)+k_2^\sigma(t,\varsigma),
\]
where
\begin{align*}
k_1^\sigma(t,\varsigma)&= \frac{\kappa_\sigma}{2\pi}\mathsf{n}(t)\cdot\big[
z(t)-z(\varsigma)\big]\frac{J_1(\kappa_\sigma|z(t)-z(\varsigma)|)}{
|z(t)-z(\varsigma)|}, \\
k_2^\sigma(t,\varsigma)&=k^\sigma(t,\varsigma)-k_1^\sigma(t,
\varsigma)\ln\Big(4\sin^2\frac{t-\varsigma}{2}\Big)
\end{align*}
are analytic with diagonal entries given by 
\[
k_1^\sigma(t,t)=0, \qquad k_2^\sigma(t,t)=
\frac{1}{2\pi}\frac{\mathsf{n}(t)\cdot z''(t)}{|z'(t)|^2}.
\]
Hence, $K^\sigma\varphi$ can be equivalently rewritten as 
\begin{align*}
(K^\sigma\varphi)(t)=(K^\sigma_1\varphi)(t)+(K^\sigma_2\varphi)(t)\overset{\text{def}}{=}
\int_0^{2\pi}\ln\Big(4\sin^2\frac{t-\varsigma}{2}\Big)k_1^\sigma(t,\varsigma)\varphi(\varsigma)\,\mathrm{d}\varsigma+\int_0^{2\pi}k_2^\sigma(t,\varsigma)\varphi(\varsigma)\,\mathrm{d}\varsigma.
\end{align*}

Following \cite{DLL2019}, we split the kernel $h^\sigma(t,\varsigma)$
of the parameterized tangential derivative integral operator into 
\begin{align}\label{split}
h^\sigma(t,\varsigma)= h_1(t,\varsigma)\cot\frac{\varsigma-t}{2}+h_2^\sigma(t,\varsigma)\ln\Big(4\sin^2\frac{t-\varsigma}{2}\Big)+h_3^\sigma(t,\varsigma),
\end{align}
where 
\begin{align*}
h_1(t,\varsigma)&= \frac{1}{\pi}n^\perp(t)\cdot\big[z(\varsigma)-z(t)\big]\frac{
\tan\frac{\varsigma-t}{2}}{|z(t)-z(\varsigma)|^2}, \\
h_2^\sigma(t,\varsigma)&= \frac{\kappa_\sigma}{2\pi}n^\perp(t)\cdot\big[
z(t)-z(\varsigma)\big]\frac{J_1(\kappa_\sigma|z(t)-z(\varsigma)|)}{
|z(t)-z(\varsigma)|},\\
h_3^\sigma(t,\varsigma)&=h^\sigma(t,\varsigma)-h_1^\sigma(t,\varsigma)\cot\frac{
\varsigma-t}{2}-h_2^\sigma(t,\varsigma)\ln\Big(4\sin^2\frac{t-\varsigma}{2}\Big)
\end{align*}
are analytic with diagonal entries given by 
\[
h_1(t,t)=\frac{1}{2\pi}, \quad h_2^\sigma(t,t)=0,
\quad h_3^\sigma(t,t)=0.
\]
In order to show the convergence, based on \eqref{split}, we split the
singular integral operator $H^\sigma$ into
\begin{align}\label{decomposition}
H^\sigma=H_1+E^\sigma H_2+\widetilde{H}_1+\widetilde{H}_2^\sigma+\widetilde{H}_3^\sigma,
\end{align}
where $E^\sigma\psi=\kappa_\sigma^2|z'|^2\psi$, and
\begin{align*}
(H_1\psi)(t)&=\frac{1}{2\pi}\int_0^{2\pi}\cot\frac{\varsigma-t}{2}\psi(\varsigma)\,\mathrm{d}\varsigma+\frac{\mathrm{i}}{2\pi}\int_{0}^{2\pi}\psi(\varsigma)\,\mathrm{d}\varsigma,\\
(H_2\psi)(t)&=\frac{1}{4\pi}\int_0^{2\pi}\ln\Big(4\sin^2\frac{t-\varsigma}{2}\Big)\sin(t-\varsigma)\psi(\varsigma)\,\mathrm{d}\varsigma+\frac{\mathrm{i}}{2\pi}\int_{0}^{2\pi}\psi(\varsigma)\,\mathrm{d}\varsigma,\\
(\widetilde{H}_1\psi)(t)&=\int_0^{2\pi}\widetilde{h}_1(t,\varsigma)\psi(\varsigma)\,\mathrm{d}\varsigma,\quad
(\widetilde{H}^\sigma_2\psi)(t)=\int_0^{2\pi}\ln\Big(4\sin^2\frac{t-\varsigma}{2}\Big)\widetilde{h}_2^\sigma(t,\varsigma)\psi(\varsigma)\,\mathrm{d}\varsigma,\\
(\widetilde{H}^\sigma_3\psi)(t)&=\int_0^{2\pi}\widetilde{h}_3^\sigma(t,\varsigma)\psi(\varsigma)\,\mathrm{d}\varsigma.
\end{align*}
Here, the functions
\begin{align*}
&\widetilde{h}_1(t,\varsigma)=\cot\frac{\varsigma-t}{2}\big(h_1(t,\varsigma)-\frac{1}{2\pi}\big),\\
&\widetilde{h}_2^\sigma(t,\varsigma)=h_2^\sigma(t,\varsigma)-{\kappa^2_\sigma}/(4\pi)|z'(t)|^2\sin(t-\varsigma),\\
&\widetilde{h}_3^\sigma(t,\varsigma)=h_3^\sigma(t,\varsigma)-\mathrm{i}\frac{
\kappa_\sigma^2|z'(t)|^2+1}{2\pi}
\end{align*}
are analytic with diagonal entries 
$\widetilde{h}_1(t,t)=\widetilde{h}_2^\sigma(t,t)=0$. We refer to the proof of Theorem \ref{compact} for the analyticity of $\widetilde{h}_1$.

\subsection{Operator equations}

We reformulate the parametrized integral equations \eqref{direct parafield} into
a single operator form
\begin{align}\label{Matrixequation}
\mathcal{A}\varphi=(\mathcal{H}+\mathcal{B})\varphi = w,
\end{align}
where $\varphi=(\varphi_1,\varphi_2)^\top$, $w=(w_1, w_2)^\top$ and
\begin{align*}
\mathcal{H}&=\left[                  
\begin{array}{cc}
-I   & H_1 \\ 
H_1  & I 
\end{array}
\right]+\left[                  
\begin{array}{cc}
0                    & E^{\mathfrak s}H_2 \\ 
E^{\mathfrak p}H_2   & 0 
\end{array}
\right]\overset{\text{def}}{=}\mathcal{H}_1+\mathcal{H}_2,\\
\mathcal{B}&=\mathcal{B}_1+\mathcal{B}_2+\mathcal{B}_3\overset{\text{def}}{=}\left[                  
\begin{array}{cc}
K_1^{\mathfrak p} & \widetilde{H}_2^{\mathfrak s} \\ 
\widetilde{H}_2^{\mathfrak p} & -K_1^{\mathfrak s}
\end{array}
\right] + \left[                  
\begin{array}{cc}
K_2^{\mathfrak p}   & \widetilde{H}_3^{\mathfrak s} \\ 
\widetilde{H}_3^{\mathfrak p}   & -K_2^{\mathfrak s}
\end{array}
\right]+\left[                  
\begin{array}{cc}
0    & \widetilde{H}_1 \\ 
\widetilde{H}_1  & 0 
\end{array}
\right].
\end{align*}
More specifically, we have 
\begin{align*}
(\mathcal{B}_1\varphi)(t)&=\int_{0}^{2\pi}\ln\Big(4\sin^2\frac{t-\varsigma}{2}
\Big)\left[\begin{array}{cc}
k_1^{\mathfrak p}(t,\varsigma)& \widetilde{h}_2^{\mathfrak s}(t,\varsigma) \\ 
\widetilde{h}_2^{\mathfrak p}(t,\varsigma)& 
-k_1^{\mathfrak s}(t,\varsigma)
\end{array}
\right]\left[                  
\begin{array}{c}
\varphi_1(\varsigma) \\ 
\varphi_2(\varsigma)
\end{array}
\right]\,\mathrm{d}\varsigma, \\
(\mathcal{B}_2\varphi+\mathcal{B}_3\varphi)(t)&=
\int_{0}^{2\pi}
\left[                  
\begin{array}{cc}
k_2^{\mathfrak p}(t,\varsigma)& \widetilde{h}_3^{\mathfrak s}(t,\varsigma)+\widetilde{h}_1(t,\varsigma) \\ 
\widetilde{h}_3^{\mathfrak p}(t,\varsigma)+\widetilde{h}_1(t,\varsigma)& 
-k_2^{\mathfrak s}(t,\varsigma)
\end{array}
\right]\left[                  
\begin{array}{c}
\varphi_1(\varsigma) \\ 
\varphi_2(\varsigma)
\end{array}
\right]\,\mathrm{d}\varsigma.
\end{align*}

Let $H^p[0,2\pi], p\geq0$ denote the space of $2\pi$-periodic functions $u: \mathbb{R}\rightarrow \mathbb{C}$ equipped with the norm
$$
\|u\|_p^2:=\sum_{m=-\infty}^\infty(1+m^2)^p|\hat{u}_m|^2<\infty,
$$
where
$$
\hat{u}_m=\frac{1}{2\pi}\int_0^{2\pi}u(t)e^{-\mathrm{i}mt}
\,\mathrm{d}t, \qquad m=0,\pm1,\pm2,\cdots
$$
are the Fourier coefficients of $u$. Define Sobolev spaces
\begin{align*}
H^p[0,2\pi]^2&=\bigg\{w=(w_1, w_2)^\top;w_1(t)\in H^p[0,2\pi],w_2(t)\in H^p[0,2\pi]\bigg\},
\\
H^p_*[0,2\pi]^2&=\bigg\{w=(w_1, w_2)^\top;w(t)\in H^p[0,2\pi]^2,
(\mathcal{H}_1w)(t)\in H^{p+2}[0,2\pi]^2\bigg\},
\end{align*}
which are equipped with the norms 
\begin{align}\label{norms}
\begin{split}
\|w\|_p&=\|w_1\|_p+\|w_2\|_p,\\
\|w\|_{p,*}&=\|w_1\|_{p}+\|w_2\|_{p}+\|H_1w_2-w_1\|_{p+2}+\|H_1w_1+w_2\|_{p+2}.
\end{split}
\end{align}
It is easy to see the embedding relation $H^{p+2}[0,2\pi]^2\hookrightarrow
H^p_*[0,2\pi]^2\hookrightarrow H^p[0,2\pi]^2$ since $H_1:
H^p[0,2\pi]\rightarrow H^{p}[0,2\pi]$ is bounded (cf. Theorem \ref{Hbounded}). 

It is difficult to analyze directly the operator equation \eqref{Matrixequation}
since the leading term $\mathcal{H}_1$ is degenerated \cite{LL2019}. To overcome
this difficulty, we introduce a regularizer via multiplying both sides of
\eqref{Matrixequation} by the operator $\mathcal{A}$. Now, we
consider the regularized equation
\begin{align}\label{equivalenteqn}
\mathcal{A}^2\varphi=(\mathcal{H}^2+\mathcal{H}\mathcal{B}+\mathcal{B}\mathcal{H}+\mathcal{B}^2)\varphi=\mathcal{A}w,
\end{align}
which is equivalent to \eqref{Matrixequation} since $\mathcal{A}$ is invertible.

\begin{theorem}\label{Ebounded}
The operator $E^\sigma: H^{p}[0,2\pi]\rightarrow H^{p}[0,2\pi]$ is bounded.
\end{theorem}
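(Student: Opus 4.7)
The plan is to recognize that $E^\sigma$ is simply the multiplication operator by the fixed analytic function $a(t):=\kappa_\sigma^2|z'(t)|^2$, and then to show that multiplication by such an $a$ is bounded on every $H^p[0,2\pi]$ with $p\ge 0$. The key structural fact is that the analyticity of $z$ together with the condition $|z'|>0$ forces $a$ to be analytic and $2\pi$-periodic, so its Fourier coefficients $\hat a_n$ decay exponentially: there exist constants $C,\delta>0$ with $|\hat a_n|\le C e^{-\delta |n|}$ for all $n\in\mathbb{Z}$.

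Next I would pass to the Fourier side. Since the Fourier coefficients of the product satisfy the convolution identity
\[
\widehat{(a\psi)}_m = \sum_{k\in\mathbb{Z}} \hat a_{m-k}\,\hat\psi_k,
\]
the elementary triangle-inequality bound $1+|m|\le(1+|k|)(1+|m-k|)$ yields, for every $p\ge 0$,
\[
(1+|m|)^p \,\bigl|\widehat{(a\psi)}_m\bigr|\;\le\;\sum_{k\in\mathbb{Z}}\bigl[(1+|m-k|)^p|\hat a_{m-k}|\bigr]\bigl[(1+|k|)^p|\hat\psi_k|\bigr].
\]
Reading the right-hand side as a convolution of the two sequences $b_n:=(1+|n|)^p|\hat a_n|$ and $c_k:=(1+|k|)^p|\hat\psi_k|$, Young's inequality $\ell^1\ast\ell^2\hookrightarrow\ell^2$ gives $\|b\ast c\|_{\ell^2}\le\|b\|_{\ell^1}\|c\|_{\ell^2}$. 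Combining this with the equivalence $(1+m^2)^{p/2}\simeq(1+|m|)^p$ produces the desired bound $\|E^\sigma\psi\|_p\le M_p\|\psi\|_p$, with $M_p$ depending only on $p$, $\kappa_\sigma$ and the curve $z$, provided $b\in\ell^1$.

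The only remaining verification is $b\in\ell^1$, which is immediate from the exponential decay of $\hat a_n$, since the sequence $(1+|n|)^p e^{-\delta|n|}$ is summable for every $p\ge 0$. I do not anticipate any serious obstacle: the argument is routine periodic Fourier analysis, with the analyticity of $|z'|^2$ providing the exponentially decaying weight that makes the convolution bound hold uniformly in $p\ge 0$.
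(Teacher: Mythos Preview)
Your proposal is correct and follows essentially the same approach as the paper: both recognize $E^\sigma$ as multiplication by the analytic function $\kappa_\sigma^2|z'|^2$ and exploit the rapid (superpolynomial, in fact exponential) decay of its Fourier coefficients to bound the multiplication operator on $H^p$. The only difference is presentational: the paper invokes \cite[Corollary~8.8]{Kress2014-book} to obtain the bound $\|a\varphi\|_p\le c_1\sum_m|\hat a_m||m|^k\|\varphi\|_p$ and then sums, whereas you unpack that step directly via the convolution identity and Young's inequality $\ell^1\ast\ell^2\hookrightarrow\ell^2$; your version is thus self-contained while the paper's is shorter by citation. One cosmetic remark: the hypothesis $|z'|>0$ is not needed for the analyticity of $|z'|^2$, since $|z'|^2=(z_1')^2+(z_2')^2$ is already a sum of squares of analytic functions.
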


\begin{proof}
Recalling $E^\sigma\varphi=\kappa_\sigma^2a(t)\varphi$ for
$\sigma=\mathfrak{p,s}$, where $a(t)=|z'(t)|^2$ and  is analytic, we may assume
\[
a(t)=\sum_{m=-\infty}^\infty\hat{a}_m \mathrm{e}^{\mathrm{i}mt},
\]
where $\hat{a}_m$ are the Fourier coefficients of $a$. The analyticity of $a$
implies that
\[
\sup_{m\in\mathbb{Z}}|m|^l|\hat{a}_m|<\infty\quad\forall\,l\geq 0,
\]
which, together with  \cite[Corollary 8.8]{Kress2014-book}, gives
\[
\|E^\sigma\varphi\|_{p}\leq c_1\sum_{m=-\infty}^\infty|\hat{a}_m||m|^k\|\varphi\|_p \leq c_1\sum_{m=-\infty}^\infty|\hat{a}_m|\frac{(1+m^2)^{k/2+1}}{1+m^2}\|\varphi\|_p\leq c_2\|\varphi\|_p,
\]
where the first inequality holds for all $k\geq p$.	
\end{proof}

\begin{theorem}\label{Hbounded}
The operator $\mathcal{H}: H^p[0,2\pi]^2\rightarrow H^p_*[0,2\pi]^2$ is bounded.
\end{theorem}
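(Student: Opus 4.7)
\emph{Proof plan.} Split $\mathcal{H}=\mathcal{H}_1+\mathcal{H}_2$ and treat the two summands separately. The argument rests on two Fourier-multiplier facts. \textbf{(i)} $H_1$ is a Hilbert-type operator plus a rank-one correction at the zero mode; its action on the Fourier basis is $H_1(e^{\mathrm{i}m\cdot})(t)=-\mathrm{i}\,\mathrm{sgn}(m)\,e^{\mathrm{i}mt}$ for $m\neq 0$ and $H_1(1)=\mathrm{i}$. Hence its symbol has modulus one everywhere, which yields $H_1:H^p[0,2\pi]\to H^p[0,2\pi]$ bounded for every $p\geq 0$ together with the key identity $H_1^2=-I$. \textbf{(ii)} Writing $\sin(t-\varsigma)=\frac{1}{2\mathrm{i}}(e^{\mathrm{i}(t-\varsigma)}-e^{-\mathrm{i}(t-\varsigma)})$ and using the classical log-integral $\frac{1}{2\pi}\int_0^{2\pi}\ln(4\sin^2\frac{u}{2})e^{\mathrm{i}ku}\,\mathrm{d}u=-1/|k|$ for $k\neq 0$, one checks that the Fourier multiplier of $H_2$ equals $\frac{\mathrm{i}}{4}\big(\frac{1}{|m-1|}-\frac{1}{|m+1|}\big)+\mathrm{i}\,\delta_{m,0}$ (with the natural reading at $m=\pm 1$), and this symbol is $O(|m|^{-2})$. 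Consequently $H_2:H^p[0,2\pi]\to H^{p+2}[0,2\pi]$ is bounded.

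For the $\mathcal{H}_1$-piece, the two higher-order terms in $\|\cdot\|_{p,*}$ cancel identically: using (i),
\begin{align*}
H_1(\mathcal{H}_1\varphi)_2-(\mathcal{H}_1\varphi)_1 &= H_1(H_1\varphi_1+\varphi_2)-(-\varphi_1+H_1\varphi_2) = H_1^2\varphi_1+\varphi_1 = 0,\\
H_1(\mathcal{H}_1\varphi)_1+(\mathcal{H}_1\varphi)_2 &= H_1(-\varphi_1+H_1\varphi_2)+(H_1\varphi_1+\varphi_2) = H_1^2\varphi_2+\varphi_2 = 0.
\end{align*}
Thus only the base part of the norm survives, and it is controlled by $\|\varphi\|_p$ via the $H^p$-boundedness of $H_1$ from (i), giving $\|\mathcal{H}_1\varphi\|_{p,*}\leq C\|\varphi\|_p$.

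For the $\mathcal{H}_2$-piece, fact (ii) combined with Theorem \ref{Ebounded} shows that each component $E^\sigma H_2\varphi_j$ lies in $H^{p+2}$ with norm bounded by $\|\varphi_j\|_p$, whence certainly $\|\mathcal{H}_2\varphi\|_p\leq C\|\varphi\|_p$. Moreover, the correction terms $H_1(\mathcal{H}_2\varphi)_k\pm(\mathcal{H}_2\varphi)_j$ are compositions of the bounded maps $H_2:H^p\to H^{p+2}$, $E^\sigma:H^{p+2}\to H^{p+2}$, and $H_1:H^{p+2}\to H^{p+2}$, so they lie in $H^{p+2}$ with the required bound. Adding the two pieces yields $\|\mathcal{H}\varphi\|_{p,*}\leq C\|\varphi\|_p$.

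The main obstacle is fact (ii): the mixed $\sin\cdot\ln$ convolution must be reduced to two shifted log-integrals, and one must inspect the boundary indices $m=0$ and $m=\pm 1$ carefully in order to confirm that the resulting symbol really decays at the full $|m|^{-2}$ rate. Fact (i) and all the cancellations for $\mathcal{H}_1$ are essentially bookkeeping once (i) is in hand.
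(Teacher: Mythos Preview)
Your proposal is correct and follows essentially the same route as the paper: compute the Fourier multipliers of $H_1$ and $H_2$ to obtain $H_1:H^p\to H^p$ and $H_2:H^p\to H^{p+2}$ bounded, use the identity $H_1^2=-I$ to make the two $H^{p+2}$-terms in $\|\mathcal{H}_1\varphi\|_{p,*}$ vanish identically, and then control $\mathcal{H}_2$ via Theorem~\ref{Ebounded}. The only cosmetic difference is that the paper handles the $\mathcal{H}_2$-piece by invoking the embedding $H^{p+2}[0,2\pi]^2\hookrightarrow H^p_*[0,2\pi]^2$ in one line, whereas you spell out each component of the starred norm; the content is the same.
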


\begin{proof}	
For the trigonometric basis functions $f_m(t):=\mathrm{e}^{\mathrm{i}mt},
\forall m\in\mathbb{Z}$, noting
\begin{align*}
H_1f_m&=\zeta_mf_m, \qquad\zeta_m=
\begin{cases}
\mathrm{i}\,{\rm sign}(m), &m\neq0,\\
\mathrm{i},      &m=0,
\end{cases}
\\
H_2f_m&=\xi_mf_m, \qquad\xi_m=
\begin{cases}
\frac{\mathrm{i}}{4}\big(\frac{1}{|m-1|}-\frac{1}{|m+1|}\big), &m=\pm2,\pm3,\cdots,\\
-\frac{\mathrm{i}}{8}{\rm sign}(m), &m=\pm1,\\
\mathrm{i},      &m=0,
\end{cases}
\end{align*}
we observe that the integral operators $H_1: H^p[0,2\pi]\rightarrow
H^{p}[0,2\pi]$ and $H_2: H^p[0,2\pi]\rightarrow H^{p+2}[0,2\pi]$ are bounded for
arbitrary $p\geq0$. Then,  $\forall\varphi=(\varphi_1,\varphi_2)^\top\in
H^p[0,2\pi]^2$, using $H_1H_1+I=0$ and \eqref{norms}, we have
\begin{align*}
\|\mathcal{H}_1\varphi\|_{p,*}&=\|
(H_1\varphi_2-\varphi_1,H_1\varphi_1+\varphi_2)^\top\|_{p,*}\\ &=\|H_1\varphi_2-\varphi_1\|_p+\|H_1(H_1\varphi_1+\varphi_2)-(H_1\varphi_2-\varphi_1)\|_{p+2}\\
&\qquad
+\|H_1\varphi_1+\varphi_2\|_p+\|H_1(H_1\varphi_2-\varphi_1)+H_1\varphi_1
+\varphi_2\|_{p+2}\\
&=\|H_1\varphi_2-\varphi_1\|_p+\|H_1\varphi_1+\varphi_2\|_p\leq
C_1\|\varphi\|_p.
\end{align*}
By Theorem \ref{Ebounded}, we get
\begin{align*}
\|\mathcal{H}_2\varphi\|_{p,*}
&\leq C_2 \|\mathcal{H}_2\varphi\|_{p+2}=C_2 \|E^{\mathfrak
p}H_2\varphi_1\|_{p+2}+C_2 \|E^{\mathfrak s}H_2\varphi_2\|_{p+2}\\
&\leq C_3(\|\varphi_1\|_p+\|\varphi_2\|_p)=C_3\|\varphi\|_p,
\end{align*}
where $C_1, C_2, C_3$ are positive constants. Combining the above estimates
shows that $\mathcal{H}=\mathcal{H}_1+\mathcal{H}_2: H^p[0,2\pi]^2\rightarrow
H^p_*[0,2\pi]^2$ is bounded. 
\end{proof}

\begin{theorem}\label{compact}
The operator $\mathcal{B}: H^p[0,2\pi]^2\rightarrow H^{p+2}[0,2\pi]^2$ is compact.
\end{theorem}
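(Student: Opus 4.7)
The plan is to split $\mathcal{B}=\mathcal{B}_1+\mathcal{B}_2+\mathcal{B}_3$ and handle each piece in turn. I would begin with a short preliminary verifying the analyticity of the kernel $\widetilde{h}_1(t,\varsigma)=\cot\frac{\varsigma-t}{2}\bigl(h_1(t,\varsigma)-\tfrac{1}{2\pi}\bigr)$, which the paper defers to the present proof. Since $h_1$ is analytic and $2\pi$-biperiodic with $h_1(t,t)=1/(2\pi)$, a Taylor expansion along $\varsigma=t$ gives $h_1(t,\varsigma)-1/(2\pi)=(\varsigma-t)g(t,\varsigma)$ with $g$ analytic near the diagonal, while $(\varsigma-t)\cot\frac{\varsigma-t}{2}$ has a removable singularity on $\varsigma=t$ (taking the value $2$ there); away from the diagonal $\cot\frac{\varsigma-t}{2}$ is itself biperiodic analytic, so $\widetilde{h}_1$ is analytic on $[0,2\pi]^2$.

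With every kernel of $\mathcal{B}_2+\mathcal{B}_3$ (namely $k_2^\sigma$, $\widetilde{h}_3^\sigma$, and $\widetilde{h}_1$) now analytic, the compactness of these two summands is immediate from the standard Cauchy-estimate argument: for an analytic kernel $\kappa(t,\varsigma)$ one has $\|\partial_t^n\kappa(t,\cdot)\|_{H^{-p}}\le Cn!/\rho^n$ uniformly in $t$, whence the associated operator $T$ satisfies $\|T\varphi\|_{H^q}\le C_q\|\varphi\|_{H^p}$ for every $p$ and every $q$. Hence $\mathcal{B}_2$ and $\mathcal{B}_3$ are bounded $H^p[0,2\pi]^2\to H^q[0,2\pi]^2$ for every $q$, and in particular compact into $H^{p+2}[0,2\pi]^2$.

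The crux is $\mathcal{B}_1$, whose entries take the form $\int_0^{2\pi}\ln(4\sin^2\frac{t-\varsigma}{2})m(t,\varsigma)\varphi(\varsigma)\,\mathrm{d}\varsigma$ with $m\in\{k_1^{\mathfrak p},k_1^{\mathfrak s},\widetilde{h}_2^{\mathfrak p},\widetilde{h}_2^{\mathfrak s}\}$. The decisive structural observation is that each such $m$ vanishes to \emph{second} order on the diagonal, not merely first. For $k_1^\sigma$ this is forced by the geometric identity $\mathsf{n}(t)\cdot z'(t)=0$, which kills the linear term in the Taylor expansion of $\mathsf{n}(t)\cdot[z(t)-z(\varsigma)]$ and leaves $k_1^\sigma(t,\varsigma)=O((\varsigma-t)^2)$; for $\widetilde{h}_2^\sigma$ the subtraction of $(\kappa_\sigma^2/(4\pi))|z'(t)|^2\sin(t-\varsigma)$ in its definition is precisely calibrated to cancel the linear Taylor term of $h_2^\sigma$. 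Consequently each kernel of $\mathcal{B}_1$ behaves like $(\varsigma-t)^2\ln|\varsigma-t|$ times an analytic factor near the diagonal, which defines a pseudodifferential operator of order $-3$. Thus $\mathcal{B}_1:H^p[0,2\pi]^2\to H^{p+3}[0,2\pi]^2$ is bounded, and Rellich's compact embedding $H^{p+3}\hookrightarrow H^{p+2}$ concludes the proof.

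The main obstacle is the compactness of $\mathcal{B}_1$: the tabulated first-order vanishing $k_1^\sigma(t,t)=\widetilde{h}_2^\sigma(t,t)=0$ by itself gives only smoothing of order $-2$, producing a bounded but not compact map $H^p\to H^{p+2}$. Recognising the second-order cancellation---via the orthogonality $\mathsf{n}\perp z'$ and the diagonal-cancelling design of $\widetilde{h}_2^\sigma$---is what supplies the additional derivative of regularity needed for compactness into $H^{p+2}[0,2\pi]^2$.
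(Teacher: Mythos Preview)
Your proposal is correct and follows essentially the same route as the paper. Both arguments split $\mathcal{B}=\mathcal{B}_1+\mathcal{B}_2+\mathcal{B}_3$, dispose of $\mathcal{B}_2$ and $\mathcal{B}_3$ via the analyticity of their kernels (the paper cites \cite[Theorem~A.45]{Kirsch2011-book} and \cite[Theorem~8.13]{Kress2014-book} where you invoke the Cauchy-estimate mechanism directly), and obtain compactness of $\mathcal{B}_1$ from the second-order diagonal vanishing of $k_1^\sigma$ and $\widetilde{h}_2^\sigma$, which yields boundedness $H^p\to H^{p+3}$ and hence compactness into $H^{p+2}$. The paper records this vanishing as $k_1^\sigma(t,t)=\partial_t k_1^\sigma(t,t)=0$ and $\widetilde{h}_2^\sigma(t,t)=\partial_t\widetilde{h}_2^\sigma(t,t)=0$ and then cites \cite[Theorems~12.15, 13.20]{Kress2014-book}; your geometric explanation via $\mathsf{n}(t)\perp z'(t)$ and the calibrated subtraction in $\widetilde{h}_2^\sigma$ is exactly what underlies those identities. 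The only cosmetic difference is the order of exposition: the paper leaves the analyticity of $\widetilde{h}_1$ for last and works it out with explicit Taylor series, whereas you dispatch it first with the removable-singularity observation for $(\varsigma-t)\cot\frac{\varsigma-t}{2}$.
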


\begin{proof}	
First we show that $\mathcal{B}_1$ and $\mathcal{B}_2$ are compact. Noting
\begin{align}\label{funder}
k_1^\sigma(t,t)=\partial_t k_1^\sigma(t,t)=0,\qquad
\widetilde{h}_2^\sigma(t,t)=\partial_t\widetilde{h}_2^\sigma(t,t)=0, \quad
\sigma=\mathfrak{p,s},
\end{align}
and using \cite[Theorems 12.15, 13.20]{Kress2014-book}, we get that
$K^\sigma_1,H^\sigma_2: H^p[0,2\pi]\rightarrow H^{p+3}[0,2\pi]$ are bounded for
arbitrary $p\geq0$. Thus $\mathcal{B}_1: H^p[0,2\pi]^2\rightarrow
H^{p+3}[0,2\pi]^2$ is bounded and consequently is compact from $H^p[0,2\pi]^2$
into $H^{p+2}[0,2\pi]^2$. Since the kernel functions $k_2$ and $\widetilde{h}_3$
are analytic, it follows from \cite[Theorem A.45]{Kirsch2011-book} and
\cite[Theorem 8.13]{Kress2014-book} that the operators
$K^\sigma_2,\widetilde{H}^\sigma_3: H^p[0,2\pi]\rightarrow H^{p+r}[0,2\pi]$ are
bounded for all integer $r\geq0$ and arbitrary $p\geq0$. Then the operator
$\mathcal{B}_2: H^p[0,2\pi]^2\rightarrow H^{p+r}[0,2\pi]^2$ is bounded for all
integer $r\geq0$ and arbitrary $p\geq0$. In particular, the operator
$\mathcal{B}_2: H^p[0,2\pi]^2\rightarrow H^{p+3}[0,2\pi]^2, \forall p\geq0$ is
bounded and consequently is compact from $H^p[0,2\pi]^2$ into
$H^{p+2}[0,2\pi]^2$. 

Next is to show the compactness of $\mathcal{B}_3$. It suffices to show that
$\widetilde{H}_1$ has an analytic kernel $\widetilde{h}_1$. In fact, for
$\varsigma$ sufficiently close to $t$, by using the Taylor expansions
\begin{align*}
\tan\frac{\varsigma-t}{2}&=\sum_{k=1}^\infty
a_k\Big(\frac{\varsigma-t}{2}\Big)^{2k-1},\, a_k>0,\, a_1=1,
\\
n^\perp(t)\cdot\big[z(\varsigma)-z(t)\big]&=z'_1(t)\sum_{k=1}^\infty\frac{z_1^
{(k)}(t)}{k!}(\varsigma-t)^k+z'_2(t)\sum_{k=1}^\infty\frac{z_2^{(k)}(t)}{k!}
(\varsigma-t)^k
\\&=|z'(t)|^2(\varsigma-t)\Big(1+\sum_{k=1}^\infty b_k(t)(\varsigma-t)^k\Big),
\\
|z(t)-z(\varsigma)|^2&=\Big(\sum_{k=1}^\infty\frac{z_1^{(k)}(t)}{k!}(\varsigma-t)^k\Big)^2+\Big(\sum_{k=1}^\infty\frac{z_2^{(k)}(t)}{k!}(\varsigma-t)^k\Big)^2
\\
&=|z'(t)|^2(\varsigma-t)^2\Big(1+\sum_{k=1}^\infty
d_k(t)(\varsigma-t)^k\Big)\overset{\text{def}}{=}
|z'(t)|^2(\varsigma-t)^2(1+\varTheta),
\end{align*}
we have
\begin{align*}
h_1(t,\varsigma)&=
\frac{1}{\pi}n^\perp(t)\cdot\big[z(\varsigma)-z(t)\big]\frac{\tan\frac{\varsigma-t}{2}}{|z(t)-z(\varsigma)|^2}\\
&=\frac{1}{2\pi}\Big(1+\sum_{k=1}^\infty
b_k(t)(\varsigma-t)^k\Big)(1-\varTheta+\varTheta^2-\varTheta^3+\cdots)
\sum_{k=1}^\infty a_k\Big(\frac{\varsigma-t}{2}\Big)^{2k-2}\\
&=\frac{1}{2\pi}+\sum_{k=1}^\infty c_k(t)(\varsigma-t)^k.
\end{align*}
Moreover, it can be easily verified that
\[
c_1(t)=\frac{b_1(t)-d_1(t)}{2\pi}=-\frac{z_1'(t)z_1''(t)+z_2'(t)z_2''(t)}{4\pi|z'(t)|^2}\not\equiv0.
\] 
Using the Taylor expansions for sine and cosine functions, we deduce that
the kernel function has the expansion
\[
\widetilde{h}_1(t,\varsigma)=\cot\frac{\varsigma-t}{2}\sum_{k=1}^\infty
c_k(t)(\varsigma-t)^k=\sum_{k=1}^\infty e_k(t)(\varsigma-t)^k,\quad
e_1(t)=2c_1(t),
\]
which implies that $\widetilde h_1$ is analytic and completes the proof. 
\end{proof} 

By \cite[Theorem 8.24]{Kress2014-book}, the operator $S_0:
H^p[0,2\pi]\rightarrow H^{p+1}[0,2\pi]$, defined by
\begin{align*}
(S_0\psi)(t)=\int_{0}^{2\pi}\ln\Big(4\sin^2\frac{t-\varsigma}{2}\Big)\psi(\varsigma)\,\mathrm{d}\varsigma+\sqrt{2}\mathrm{i}\int_{0}^{2\pi}\psi(\varsigma)\,\mathrm{d}\varsigma\overset{\text{def}}{=}(\widetilde{S}_0\psi)(t)+M_0,
\end{align*}
is bounded and has a bounded inverse for all $p\geq0$. We denote the operator $\mathcal{S}_0$ and $\mathcal{E}$ by
\begin{align*}
\mathcal{S}_0=\left[                  
\begin{array}{cc}
S_0   & 0 \\ 
0        & S_0 
\end{array}
\right], \quad 
\mathcal{E}=\frac{1}{8\pi^2}\left[                  
\begin{array}{cc}
E^{\mathfrak{p}}+E^{\mathfrak{s}}  & 0 \\ 
0        & E^{\mathfrak{p}}+E^{\mathfrak{s}} 
\end{array}
\right].
\end{align*}
Clearly, $\mathcal{S}_0\mathcal{S}_0$ is a isomorphism from $H^p[0,2\pi]^2$ to $H^{p+2}[0,2\pi]^2$ and $\mathcal{E}$ is a isomorphism from $H^p[0,2\pi]^2$ to $H^p[0,2\pi]^2$ since $a(t)$ is analytic and $a(t)\ne 0$.

\begin{theorem}\label{isomorphism}
For any function $\varphi\in H^p[0,2\pi]^2$,
the operator $\mathcal{H}^2: H^p[0,2\pi]^2\rightarrow H^{p+2}[0,2\pi]^2$ can be
expressed as
\begin{align*}
\mathcal{H}^2\varphi=(\mathcal{E}\mathcal{S}_0\mathcal{S}_0+\mathcal{J})\varphi,
\end{align*}
where $\mathcal{J}$ is a compact operator from $H^p[0,2\pi]^2$ into $H^{p+2}[0,2\pi]^2$.
\end{theorem}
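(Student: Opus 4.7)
The plan is to expand $\mathcal{H}^2=\mathcal{H}_1^2+\mathcal{H}_1\mathcal{H}_2+\mathcal{H}_2\mathcal{H}_1+\mathcal{H}_2^2$ and argue that every piece except the leading isomorphic one is compact from $H^p[0,2\pi]^2$ into $H^{p+2}[0,2\pi]^2$. First I would dispose of $\mathcal{H}_1^2$: from the multipliers $\zeta_m$ computed in the proof of Theorem~\ref{Hbounded} one has $\zeta_m^2=-1$ for every $m\in\mathbb{Z}$, so $H_1^2=-I$, and a $2\times 2$ block calculation gives $\mathcal{H}_1^2=0$ (the diagonal entries reduce to $I+H_1^2=0$ and the off-diagonals to $-H_1+H_1=0$). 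For $\mathcal{H}_2^2$, the eigenvalues $\xi_m$ satisfy $|\xi_m|=O(|m|^{-2})$, so Corollary~8.8 of \cite{Kress2014-book} gives $H_2\colon H^p[0,2\pi]\to H^{p+2}[0,2\pi]$ bounded; together with the boundedness of $E^\sigma$ on $H^p$ from Theorem~\ref{Ebounded} this shows each nonzero entry $E^\sigma H_2E^{\sigma'}H_2$ of $\mathcal{H}_2^2$ maps $H^p$ into $H^{p+4}$, making $\mathcal{H}_2^2$ compact into $H^{p+2}[0,2\pi]^2$.

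The cross term $\mathcal{H}_1\mathcal{H}_2+\mathcal{H}_2\mathcal{H}_1$ carries the non-compact leading part. A direct block computation shows that its off-diagonal entries cancel, while the $(1,1)$ entry equals $H_1E^{\mathfrak{p}}H_2+E^{\mathfrak{s}}H_2H_1$ (and symmetrically for the $(2,2)$ entry). To simplify I would use two observations. First, $H_1$ and $H_2$ commute, because both are diagonalized by the trigonometric basis with eigenvalues $\zeta_m$ and $\xi_m$. Second, the commutator $[H_1,E^\sigma]$ is smoothing of infinite order: with $e=\kappa_\sigma^2|z'|^2$ analytic and $2\pi$-periodic, the commutator kernel
\[
\tfrac{1}{2\pi}\cot\tfrac{\varsigma-t}{2}\,[e(\varsigma)-e(t)]+\tfrac{\mathrm{i}}{2\pi}\,[e(\varsigma)-e(t)]
\]
is analytic on $[0,2\pi]\times[0,2\pi]$ because the first-order zero of $e(\varsigma)-e(t)$ at $\varsigma=t$ cancels the simple pole of cotangent, so \cite[Theorem~8.13]{Kress2014-book} yields $[H_1,E^\sigma]\colon H^p[0,2\pi]\to H^{p+r}[0,2\pi]$ bounded for every $r\geq 0$. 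Combining these facts reduces the $(1,1)$ entry to $(E^{\mathfrak{p}}+E^{\mathfrak{s}})H_1H_2$ modulo a compact operator into $H^{p+2}[0,2\pi]$, and the same holds for the $(2,2)$ entry; the resulting diagonal block matrix is exactly the structure of $\mathcal{E}$ sitting to the left of the scalar operator $H_1H_2$.

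The final step is to identify $H_1H_2$ with a scalar multiple of $S_0^2$ up to a compact remainder, which I would verify by matching Fourier multipliers. From the explicit formulas in the proof of Theorem~\ref{Hbounded}, the multiplier $\zeta_m\xi_m$ of $H_1H_2$ can be written for $|m|\geq 2$ in the form $c/(m^2-1)$ for an explicit real constant $c$, while $S_0f_m=-\tfrac{2\pi}{|m|}f_m$ for $m\neq 0$ gives $\tfrac{1}{8\pi^2}S_0^2$ the multiplier $\tfrac{1}{2m^2}$. Using the elementary identity
\[
\frac{1}{m^2-1}=\frac{1}{m^2}+\frac{1}{m^2(m^2-1)},
\]
the difference between $H_1H_2$ and the appropriate scalar multiple of $S_0^2$ has Fourier multiplier of magnitude $O(|m|^{-4})$ at high frequencies, plus a finite-rank discrepancy at the low modes $m\in\{-1,0,1\}$. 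Corollary~8.8 of \cite{Kress2014-book} then places this difference bounded from $H^p[0,2\pi]$ into $H^{p+4}[0,2\pi]$, hence compact into $H^{p+2}[0,2\pi]$. Assembling all compact contributions into $\mathcal{J}$ yields $\mathcal{H}^2=\mathcal{E}\mathcal{S}_0\mathcal{S}_0+\mathcal{J}$ as claimed. The main obstacle is this closing Fourier-multiplier comparison: one has to extract the leading $1/m^2$ behaviour of $\zeta_m\xi_m$ precisely enough that the remainder decays like $|m|^{-4}$ rather than $|m|^{-2}$, and then absorb the finitely many low-frequency discrepancies and the smoothing commutators into $\mathcal{J}$ as a single compact perturbation.
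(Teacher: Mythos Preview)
Your approach is correct and takes a different route from the paper. Where the paper computes $H_1H_2$ by an explicit integral manipulation---integrating by parts to turn the cotangent kernel of $H_1$ into the logarithmic kernel of $\widetilde{S}_0$, then peeling off concrete remainder operators $J_3,J_4$---you work entirely at the level of Fourier multipliers, comparing the exact symbol $\zeta_m\xi_m$ of $H_1H_2$ with the symbol $4\pi^2/m^2$ of $S_0^2$ through the identity $\tfrac{1}{m^2-1}=\tfrac{1}{m^2}+\tfrac{1}{m^2(m^2-1)}$. Your route is shorter and more transparent for proving the theorem itself; the paper's route, by contrast, produces explicit integral formulae for the compact remainder (the operators $\mathcal{J}_1,\dots,\mathcal{J}_4$) that are reused in Section~4 to define and estimate the fully discrete quadrature operator $\mathcal{J}_n$, so if you adopt your argument you will still need some explicit representation of $\mathcal{J}$ later. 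Two minor remarks: the mapping property for a Fourier multiplier of size $O(|m|^{-s})$ follows immediately from the definition of the $H^p$ norm and does not require Corollary~8.8 of \cite{Kress2014-book} (which, as in the proof of Theorem~\ref{Ebounded}, is about pointwise multiplication); and when you actually carry out the comparison you will find $\zeta_m\xi_m=-\tfrac{1}{2(m^2-1)}$ for $|m|\ge 2$, so the ``appropriate scalar multiple'' of $S_0^2$ comes out as $-\tfrac{1}{8\pi^2}$ rather than $+\tfrac{1}{8\pi^2}$---a harmless sign discrepancy with the stated form $\mathcal{E}\mathcal{S}_0\mathcal{S}_0$ that affects neither the isomorphism-plus-compact conclusion nor any of the downstream analysis.
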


\begin{proof}
It follows from a straightforward calculation that
\begin{align*}
\mathcal{H}^2&=\left[                  
\begin{array}{cc}
-I   & H_1+E^{\mathfrak s}H_2 \\ 
H_1+E^{\mathfrak p}H_2   & I 
\end{array}
\right]\left[                  
\begin{array}{cc}
-I   & H_1+E^{\mathfrak s}H_2 \\ 
H_1+E^{\mathfrak p}H_2   & I 
\end{array}
\right]\\
&=\left[                  
\begin{array}{cc}
H_1E^{\mathfrak p}H_2+E^{\mathfrak s}H_2H_1   & 0 \\ 
0   & H_1E^{\mathfrak s}H_2+E^{\mathfrak p}H_2H_1 
\end{array}
\right]+\mathcal{J}_1,
\end{align*}
where
\begin{align*}
\mathcal{J}_1&=\left[                  
\begin{array}{cc}
E^{\mathfrak s}J_1^{\mathfrak p}H_2   & 0 \\ 
0   & E^{\mathfrak p}J_1^{\mathfrak s}H_2
\end{array}
\right]+\left[                  
\begin{array}{cc}
E^{\mathfrak s}E^{\mathfrak p}H_2H_2   & 0 \\ 
0   & E^{\mathfrak p}E^{\mathfrak s}H_2H_2
\end{array}
\right],\\
J_1^{\sigma}\psi&=H_2E^{\sigma}\psi-E^{\sigma}H_2\psi=\frac{\kappa_{\sigma}^2}{4\pi}\int_{0}^{2\pi}\ln\Big(4\sin^2\frac{t-\varsigma}{2}\Big)\sin(t-\varsigma)\Big(|z'(\varsigma)|^2-|z'(t)|^2\Big)\psi(\varsigma)\,\mathrm{d}\varsigma\\
&+\frac{\mathrm{i}\kappa_{\sigma}^2}{2\pi}\int_{0}^{2\pi}
\Big(|z'(\varsigma)|^2-|z'(t)|^2\Big)\psi(\varsigma)\,\mathrm{d}\varsigma,
\end{align*}
and $\mathcal{J}_1$ is bounded from $H^p[0,2\pi]^2$ to $H^{p+4}[0,2\pi]^2$ and consequently is compact from $H^p[0,2\pi]^2$ into $H^{p+2}[0,2\pi]^2$.

For the first term of $\mathcal{H}^2$, we rewrite the first diagonal element by
\begin{align*}
H_1E^{\mathfrak p}H_2+E^{\mathfrak s}H_2H_1=E^{\mathfrak p}H_1H_2+E^{\mathfrak s}H_2H_1+J_2^{\mathfrak p},
\end{align*}
where $J_2^{\mathfrak p}\psi=H_1E^{\mathfrak p}H_2\psi-E^{\mathfrak
p}H_1H_2\psi=\widetilde{J}_2^{\mathfrak p}H_2\psi$ and
\begin{align*}
(\widetilde{J}_2^{\mathfrak p}\psi)(t)&=\frac{\kappa_{\mathfrak p}^2}{2\pi}\int_{0}^{2\pi}\cot\frac{t-\varsigma}{2}\Big(|z'(\varsigma)|^2-|z'(t)|^2\Big)\psi(\varsigma)\,\mathrm{d}\varsigma+\frac{\mathrm{i}\kappa_{\mathfrak p}^2}{2\pi}\int_{0}^{2\pi}\Big(|z'(\varsigma)|^2-|z'(t)|^2\Big)\psi(\varsigma)\,\mathrm{d}\varsigma.
\end{align*}
The operator $J_2^{\mathfrak p}$ is compact from $H^p[0,2\pi]^2$ into $H^{p+2}[0,2\pi]^2$, since the operator $\widetilde{J}_2^{\mathfrak p}$ has an analytic kernel and $H_2$ is bounded. In addition, for $\psi\in H^p[0,2\pi]$, we have
\begin{align*}
(H_1H_2\psi)(t)&=\frac{1}{2\pi}\int_{0}^{2\pi}\cot\frac{t-\varsigma}{2}(H_2\psi)(\varsigma)\,\mathrm{d}\varsigma+\frac{\mathrm{i}}{2\pi}\int_{0}^{2\pi}(H_2\psi)(\varsigma)\,\mathrm{d}\varsigma\\
&=\frac{1}{2\pi}\int_{0}^{2\pi}\ln\Big(4\sin^2\frac{t-\varsigma}{2}\Big)\frac{\mathrm{d}}{\mathrm{d}\varsigma}(H_2\psi)(\varsigma)\,\mathrm{d}\varsigma+\frac{\mathrm{i}}{2\pi}\xi_0\hat{\psi}_02\pi\\
&=\frac{1}{2\pi}\frac{1}{4\pi}\int_{0}^{2\pi}\ln\Big(4\sin^2\frac{t-\varsigma}{2}\Big)\left\{\frac{\mathrm{d}}{\mathrm{d}\varsigma}\int_0^{2\pi}\ln\Big(4\sin^2\frac{\varsigma-s}{2}\Big)\sin(\varsigma-s)\psi(s)\,\mathrm{d}s\right\}\,\mathrm{d}\varsigma-\hat{\psi}_0\\
&=\frac{1}{8\pi^2}\int_{0}^{2\pi}\ln\Big(4\sin^2\frac{t-\varsigma}{2}\Big)\int_{0}^{2\pi}\ln\Big(4\sin^2\frac{\varsigma-s}{2}\Big)\psi(s)\,\mathrm{d}s\,\mathrm{d}\varsigma-\hat{\psi}_0+\frac{1}{8\pi^2}(J_3\psi+J_4\psi)(t)\\
&=\frac{1}{8\pi^2}(S_0S_0\psi)(t)+\frac{1}{8\pi^2}(J_3\psi)(t)+\frac{1}{8\pi^2}(J_4\psi)(t),
\end{align*}
where
\begin{align*}
(J_3\psi)(t)&=\int_{0}^{2\pi}\ln\Big(4\sin^2\frac{t-\varsigma}{2}\Big)\int_{0}^{2\pi}\cot\frac{\varsigma-s}{2}\sin(\varsigma-s)\psi(s)\,\mathrm{d}s\overset{\text{def}}{=}(\widetilde{S}_0\widetilde{J}_3\psi)(t),\\
(J_4\psi)(t)&=\int_{0}^{2\pi}\ln\Big(4\sin^2\frac{t-\varsigma}{2}\Big)\int_{0}^{2\pi}\ln\Big(4\sin^2\frac{\varsigma-s}{2}\Big)(\cos(\varsigma-s)-1)\psi(s)\,\mathrm{d}s\overset{\text{def}}{=}(\widetilde{S}_0\widetilde{J}_4\psi)(t),
\end{align*}
are compact from $H^p[0,2\pi]^2$ into $H^{p+2}[0,2\pi]^2$, since the operator
$\widetilde{J}_3$ has an analytic kernel and the operator $\widetilde{S}_0:
H^p[0,2\pi]^2\rightarrow H^{p+1}[0,2\pi]^2$ and the operator $J_4:
H^p[0,2\pi]^2\rightarrow H^{p+3}[0,2\pi]^2$ are bounded. Clearly it also holds
that $H_2H_1=H_1H_2$ . 

Similarly, we can analyze the second diagonal element as the first one. Therefore, we obtain the assertion of the theorem by defining the operator
\begin{align*}
\mathcal{J}=\mathcal{J}_1+\mathcal{J}_2+\mathcal{J}_3+\mathcal{J}_4,
\end{align*}
where
\[
\mathcal{J}_2=\left[                  
\begin{array}{cc}
J_2^{\mathfrak p}  & 0 \\ 
0   & J_2^{\mathfrak s}
\end{array}
\right],\quad 
\mathcal{J}_3=\frac{\kappa_{\mathfrak p}^2+\kappa_{\mathfrak s}^2}{8\pi^2}|z'|^2\left[                  
\begin{array}{cc}
J_3  & 0 \\ 
0   & J_3
\end{array}
\right], \quad 
\mathcal{J}_4=\frac{\kappa_{\mathfrak p}^2+\kappa_{\mathfrak s}^2}{8\pi^2}|z'|^2\left[                  
\begin{array}{cc}
J_4  & 0 \\ 
0   & J_4
\end{array}
\right].
\]
\end{proof}

\section{Trigonometric collocation method}

Consider the following equivalent formulation of the operator equation
\eqref{equivalenteqn}: 
\begin{align}\label{Matrixequation2}
\mathcal{S}_0\mathcal{S}_0\varphi+\mathcal{E}^{-1}(\mathcal{J}+\mathcal{K}
)\varphi=\mathcal{E}^{-1}\mathcal{A}w,
\end{align}
where $\mathcal{K}\overset{\text{def}}{=}\mathcal{H}\mathcal{B}+\mathcal{B}\mathcal{H}+\mathcal{B}^2$
is a compact operator from $H^p[0,2\pi]^2$ into $H^{p+2}[0,2\pi]^2$ by Theorems
\ref{Hbounded} and \ref{compact}. In this section, we examine the convergence
of the semi- and full-discretization of \eqref{Matrixequation2} by using the
trigonometric collocation method. 

\subsection{Semi-discretization}

Let $X_n$ be the space of trigonometric polynomials of degree less than or equal to $n$ of the form
\begin{align}\label{trigopoly}
\varphi(t)=\sum_{m=0}^n\alpha_m\cos mt+\sum_{m=1}^{n-1}\beta_m\sin mt.
\end{align}
Denote by $P_n: H^p[0,2\pi]\rightarrow X_n$ the interpolation operator, which
maps $2\pi$-periodic scalar function $g$ into a unique trigonometric polynomial
$P_ng$ at the equidistant interpolation points $\varsigma_j^{(n)}:=\pi j/n$,
$j=0,\cdots,2n-1$, i.e., $(P_ng)(\varsigma_j^{(n)})=g(\varsigma_j^{(n)})$. Then,
$P_n$ is a bounded linear operator.

Let $X_n^2=\{\varphi=(\varphi_1,\varphi_2)^\top: \varphi_1\in X_n,
\varphi_2\in X_n\}$ and define the interpolation operator $\mathcal{P}_n:
H^p[0,2\pi]^2\rightarrow X_n^2$ by
$$
\mathcal{P}_ng=(P_ng_1,P_ng_2)^\top,\quad\forall\,g=(g_1,g_2)\in H^p[0,2\pi]^2.
$$
Clearly, $X_n^2$ is unisolvent with respect to the points
$\{\varsigma_j^{(n)}\}_{j=0}^{2n-1}$. Moreover, we have from \cite[Theorem
11.8]{Kress2014-book} that 
\begin{align}\label{interpolationerror}
\|\mathcal{P}_ng-g\|_{q}\leq\frac{C}{n^{p-q}}\|g\|_{p},\qquad 0\leq q\leq p, \quad \frac{1}{2}<p
\end{align}
for all $g\in H^p[0,2\pi]^2$ and some constant $C$ depending on $p$ and $q$.

Now we approximate the solution $\varphi=(\varphi_1,\varphi_2)^\top$ by a
trigonometric polynomial $\varphi^n=(\varphi^n_1,\varphi^n_2)^\top\in X_n^2$,
which is required to satisfy the projected equation
\begin{align}\label{semicollocation}
\mathcal{S}_0\mathcal{S}_0\varphi^n +
\mathcal{P}_n[\mathcal{E}^{-1}(\mathcal{J}+\mathcal{K})]\varphi^n =
\mathcal{P}_n(\mathcal{E}^{-1}\mathcal{A})w,
\end{align}
where $\varphi^n$
satisfies $\mathcal{P}_n(\mathcal{S}_0\mathcal{S}_0)\varphi^n=\mathcal{S}
_0\mathcal{S} _0\varphi^n$.

\begin{theorem}
For sufficiently large $n$, the approximate equation \eqref{semicollocation} is
uniquely solvable and the solution satisfies the error estimate
\begin{align*}
\|\varphi^n-\varphi\|_p\leq
L\|\mathcal{P}_n\mathcal{S}_0\mathcal{S}_0\varphi-\mathcal{S}_0\mathcal{S}
_0\varphi\|_{p+2},
\end{align*}
where $L$ is a positive constant depending on 
$\mathcal{E}^{-1}\mathcal{J}$, $\mathcal{E}^{-1}\mathcal{K}$ and
$\mathcal{S}_0\mathcal{S}_0$.
\end{theorem}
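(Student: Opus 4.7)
The plan is to recast \eqref{Matrixequation2} and \eqref{semicollocation} as perturbed identities on $H^p[0,2\pi]^2$ and apply the classical stability theory for compact operator perturbations. Since $S_0$ is diagonal in the Fourier basis, $\mathcal{S}_0\mathcal{S}_0$ maps $X_n^2$ into itself, so the identity $\mathcal{P}_n\mathcal{S}_0\mathcal{S}_0\varphi^n=\mathcal{S}_0\mathcal{S}_0\varphi^n$ holds. Applying the isomorphism $(\mathcal{S}_0\mathcal{S}_0)^{-1}\colon H^{p+2}[0,2\pi]^2\to H^p[0,2\pi]^2$ to both equations transforms them into
\begin{equation*}
(I+T)\varphi=g,\qquad (I+T_n)\varphi^n=g_n,
\end{equation*}
where $T:=(\mathcal{S}_0\mathcal{S}_0)^{-1}\mathcal{E}^{-1}(\mathcal{J}+\mathcal{K})$ and $T_n:=(\mathcal{S}_0\mathcal{S}_0)^{-1}\mathcal{P}_n\mathcal{E}^{-1}(\mathcal{J}+\mathcal{K})$. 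By Theorems \ref{Hbounded}, \ref{compact}, and \ref{isomorphism}, $\mathcal{E}^{-1}(\mathcal{J}+\mathcal{K})\colon H^p\to H^{p+2}$ is compact, so $T$ is compact on $H^p$; the unique solvability of \eqref{direct field}, equivalently of \eqref{Matrixequation2}, forces $I+T$ to be invertible on $H^p$.

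Subtracting the two identities gives the error equation
\begin{equation*}
(I+T_n)(\varphi^n-\varphi)=(\mathcal{S}_0\mathcal{S}_0)^{-1}\bigl[\mathcal{P}_n\mathcal{S}_0\mathcal{S}_0\varphi-\mathcal{S}_0\mathcal{S}_0\varphi\bigr],
\end{equation*}
so as soon as $\|(I+T_n)^{-1}\|_{H^p\to H^p}$ is bounded uniformly in all sufficiently large $n$, the claimed estimate follows immediately with
\begin{equation*}
L=\|(I+T_n)^{-1}\|_{H^p\to H^p}\,\|(\mathcal{S}_0\mathcal{S}_0)^{-1}\|_{H^{p+2}\to H^p}.
\end{equation*}
In particular, the solvability of \eqref{semicollocation} also follows from the invertibility of $I+T_n$.

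The main obstacle is establishing the norm convergence $\|T_n-T\|_{H^p\to H^p}\to 0$, which via a standard Neumann-series perturbation of the invertible $I+T$ simultaneously yields the invertibility of $I+T_n$ and the uniform bound on its inverse. Writing $T_n-T=(\mathcal{S}_0\mathcal{S}_0)^{-1}(\mathcal{P}_n-I)\mathcal{E}^{-1}(\mathcal{J}+\mathcal{K})$, I would use that $\mathcal{E}^{-1}(\mathcal{J}+\mathcal{K})$ sends the $H^p$-unit ball to a relatively compact subset $\mathcal{F}$ of $H^{p+2}$. Since $p+2>1/2$, the interpolation operators $\mathcal{P}_n$ are uniformly bounded on $H^{p+2}$ (a classical consequence of \eqref{interpolationerror}, cf.\ \cite{Kress2014-book}); combined with the density of trigonometric polynomials in $H^{p+2}$, this gives pointwise convergence $\mathcal{P}_n h\to h$ in $H^{p+2}$ for every $h\in H^{p+2}$. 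A standard equicontinuity argument (cover $\mathcal{F}$ by finitely many $\varepsilon$-balls, exploit uniform boundedness of $\mathcal{P}_n$ on $H^{p+2}$) then upgrades this to uniform convergence on $\mathcal{F}$, that is
\begin{equation*}
\|(\mathcal{P}_n-I)\mathcal{E}^{-1}(\mathcal{J}+\mathcal{K})\|_{H^p\to H^{p+2}}=\sup_{h\in\mathcal{F}}\|\mathcal{P}_n h-h\|_{p+2}\longrightarrow 0,
\end{equation*}
and composing with the bounded $(\mathcal{S}_0\mathcal{S}_0)^{-1}\colon H^{p+2}\to H^p$ finishes the required convergence. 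The only delicate ingredient is the uniform $H^{p+2}$-boundedness of trigonometric interpolation, which I would cite rather than re-derive; with it, the Riesz--Fredholm perturbation framework closes the argument.
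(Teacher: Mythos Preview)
Your argument is correct and follows the same overall architecture as the paper: both recast \eqref{Matrixequation2}--\eqref{semicollocation} in second-kind form, establish operator-norm convergence of the approximate operators, and then invoke the standard perturbation/stability result (the paper cites \cite[Theorem~13.12]{Kress2014-book}, which encapsulates exactly the Neumann-series step you spell out). Your error equation and the identification of the right-hand side as $(\mathcal{S}_0\mathcal{S}_0)^{-1}[\mathcal{P}_n\mathcal{S}_0\mathcal{S}_0\varphi-\mathcal{S}_0\mathcal{S}_0\varphi]$ are exactly what the cited theorem delivers.

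The one genuine difference is in how you obtain $\|(\mathcal{P}_n-I)\mathcal{E}^{-1}(\mathcal{J}+\mathcal{K})\|_{H^p\to H^{p+2}}\to 0$. You argue abstractly: $\mathcal{E}^{-1}(\mathcal{J}+\mathcal{K})$ is compact into $H^{p+2}$, $\mathcal{P}_n$ is uniformly bounded and pointwise convergent on $H^{p+2}$, hence uniform convergence on the relatively compact image. The paper instead exploits a quantitative smoothing gain: from the proofs of Theorems~\ref{compact} and \ref{isomorphism} it extracts that $\mathcal{J},\mathcal{K}:H^p\to H^{p+3}$ are \emph{bounded} (one order better than the compactness statement), so \eqref{interpolationerror} with $q=p+2$ and $p$ replaced by $p+3$ gives the explicit rate
\[
\|(\mathcal{P}_n-I)\mathcal{E}^{-1}(\mathcal{J}+\mathcal{K})\varphi\|_{p+2}\leq \frac{c}{n}\|\varphi\|_p.
\]
Your compactness route is more general and avoids revisiting the internal structure of $\mathcal{J}$ and $\mathcal{K}$; the paper's route is shorter, yields an explicit convergence rate for the operator norm, and sidesteps the equicontinuity argument entirely.
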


\begin{proof}
From the proofs of Theorems \ref{compact} and \ref{isomorphism}, we know
that the operators $\mathcal{J}, \mathcal{K}: H^p[0,2\pi]^2\rightarrow
H^{p+3}[0,2\pi]^2, \forall p\geq0$ are bounded. With the aid of
\eqref{interpolationerror}, we deduce that
\begin{align*}
\|\mathcal{P}_n[\mathcal{E}^{-1}(\mathcal{J}+\mathcal{K})]\varphi-\mathcal{E}^{-1}(\mathcal{J}+\mathcal{K})\varphi\|_{p+2}\leq\frac{c_1}{n}\|\mathcal{E}^{-1}(\mathcal{J}+\mathcal{K})\varphi\|_{p+3}\leq\frac{c_2}{n}\|\varphi\|_{p}
\end{align*}
for all $p\geq0$ and some constants $c_1$ and $c_2$, which implies
\begin{align*}
\|\mathcal{P}_n[\mathcal{E}^{-1}(\mathcal{J}+\mathcal{K})]-\mathcal{E}^{-1}
(\mathcal{J}+\mathcal{K})\|_{p+2}\rightarrow 0\quad \text{as} \quad
n\rightarrow\infty.
\end{align*}
The proof is completed by noting \cite[Theorem 13.12]{Kress2014-book}.
\end{proof}

The above theorem implies that the semi-discrete collocation method given by
\eqref{semicollocation} converges in $H^p[0,2\pi]^2$ for each $p\geq 0$.

\subsection{Full-discretization}

Denote the Lagrange basis by
$$
\mathfrak{L}_j(t)=\frac{1}{2n}\left\{1+2\sum_{k=1}^{n-1}\cos k(t-\varsigma_j^{(n)})+\cos n(t-\varsigma_j^{(n)})\right\}, \quad j=0,1,\cdots,2n-1.
$$
Instead of \eqref{semicollocation}, we find an approximate solution
$\widetilde\varphi^n\in X_n^2$ given by
$$\widetilde\varphi^n(t)=\big(\widetilde\varphi^n_1(t),
\widetilde\varphi^n_2(t)\big)^\top=\Big(\sum_{j=0}^{2n-1}
\widetilde\varphi^n_1(\varsigma_j^{(n)})\mathfrak{L}_j(t) ,
\sum_{j=0}^{2n-1}\widetilde\varphi^n_2(\varsigma_j^{(n)})
\mathfrak{L}_j(t)\Big)^\top,
$$ 
which is required to satisfy 
\begin{align}\label{fullcollocation}
\mathcal{S}_{0,n}\mathcal{S}_{0,n}\widetilde\varphi^n+\mathcal{P}_n[\mathcal{E}_n^{-1}(\mathcal{J}_n+\mathcal{K}_n)]\widetilde\varphi^n=\mathcal{P}_n(\mathcal{E}_n^{-1}\mathcal{A}_n)w, 
\end{align}
where $\displaystyle\mathcal{A}_n=\mathcal{H}_n+\mathcal{B}_n=\sum_{j=1}^{2}\mathcal{H}_{j,n}+\sum_{j=1}^{3}\mathcal{B}_{j,n}$, $\displaystyle\mathcal{J}_n=\sum_{j=1}^{4}\mathcal{J}_{j,n}$, $\mathcal{K}_n=\mathcal{H}_n\mathcal{B}_n+\mathcal{B}_n\mathcal{H}_n+\mathcal{B}_n\mathcal{B}_n$, and the quadrature operators are described by $\mathcal{S}_{0,n}=\mathcal{S}_{0}\mathcal{P}_n$, $\mathcal{H}_{1,n}=\mathcal{H}_{1}\mathcal{P}_n$, $\mathcal{H}_{2,n}=\mathcal{H}_{2}\mathcal{P}_n$,
\begin{align*}
(\mathcal{B}_{1,n}\chi)(t)&=\int_0^{2\pi}\ln\Big(4\sin^2\frac{t-\varsigma}{2}\Big)\mathcal{P}_n \Bigg\{\left[\begin{array}{cc}
k_1^{\mathfrak p}(t,\cdot)& \widetilde{h}_2^{\mathfrak s}(t,\cdot)\\ 
\widetilde{h}_2^{\mathfrak p}(t,\cdot)& -k_1^{\mathfrak s}(t,\cdot;)
\end{array}
\right]\chi\Bigg\}(\varsigma)\,\mathrm{d}\varsigma,\\
(\mathcal{B}_{2,n}\chi+\mathcal{B}_{3,n}\chi)(t)&=\int_0^{2\pi}\mathcal{P}_n \Bigg\{\left[\begin{array}{cc}
k_2^{\mathfrak p}(t,\cdot)& \widetilde{h}_3^{\mathfrak s}(t,\cdot)+\widetilde{h}_1(t,\cdot) \\ 
\widetilde{h}_3^{\mathfrak p}(t,\cdot)+\widetilde{h}_1(t,\cdot)& -k_2^{\mathfrak s}(t,\cdot)
\end{array}
\right]\chi\Bigg\}(\varsigma)\,\mathrm{d}\varsigma,
\end{align*}
and define $E_n^{\sigma}\psi=\kappa_{\sigma}^2|z'|^2\psi=E^{\sigma}\psi$, $H_{2,n}=H_2P_n$, $\widetilde{S}_{0,n}=\widetilde{S}_{0}P_n$,
\begin{align*}
\mathcal{E}_n&=\frac{1}{8\pi^2}\left[             \begin{array}{cc}
E_n^{\mathfrak{p}}+E_n^{\mathfrak{s}}  & 0 \\ 
0        & E_n^{\mathfrak{p}}+E_n^{\mathfrak{s}} 
\end{array}
\right],\\
\mathcal{J}_{1,n}&=\left[                  
\begin{array}{cc}
E_n^{\mathfrak s}J_{1,n}^{\mathfrak p}H_{2,n}   & 0 \\ 
0   & E_n^{\mathfrak p}J_{1,n}^{\mathfrak s}H_{2,n}
\end{array}
\right]+\left[                  
\begin{array}{cc}
E_n^{\mathfrak s}E_n^{\mathfrak p}H_{2,n}H_{2,n}   & 0 \\ 
0   & E_n^{\mathfrak p}E_n^{\mathfrak s}H_{2,n}H_{2,n}
\end{array}
\right], \\
\mathcal{J}_{2,n}&=\left[                  
\begin{array}{cc}
\widetilde{J}_{2,n}^{\mathfrak p}H_{2,n}  & 0 \\ 
0   & \widetilde{J}_{2,n}^{\mathfrak s}H_{2,n}
\end{array}
\right],\quad 
\mathcal{J}_{3,n}=\frac{\kappa_{\mathfrak p}^2+\kappa_{\mathfrak s}^2}{8\pi^2}|z'|^2\left[                  
\begin{array}{cc}
\widetilde{S}_{0,n}\widetilde{J}_{3,n}  & 0 \\ 
0   & \widetilde{S}_{0,n}\widetilde{J}_{3,n}
\end{array}
\right], \\
\mathcal{J}_{4,n}&=\frac{\kappa_{\mathfrak p}^2+\kappa_{\mathfrak s}^2}{8\pi^2}|z'|^2\left[                  
\begin{array}{cc}
\widetilde{S}_{0,n}\widetilde{J}_{4,n}  & 0 \\ 
0   & \widetilde{S}_{0,n}\widetilde{J}_{4,n}
\end{array}
\right],
\end{align*}
where
\begin{align*}
(J_{1,n}^{\sigma}\psi)(t)&=\frac{\kappa_{\sigma}^2}{4\pi}\int_{0}^{2\pi}\ln\Big(4\sin^2\frac{t-\varsigma}{2}\Big)P_n\left\{\sin(t-\cdot)\Big(|z'(\cdot)|^2-|z'(t)|^2\Big)\psi\right\}(\varsigma)\,\mathrm{d}\varsigma\\
&\qquad +\frac{\mathrm{i}\kappa_{\sigma}^2}{2\pi}\int_{0}^{2\pi}P_n\left\{
\Big(|z'(\cdot)|^2-|z'(t)|^2\Big)\psi\right\}(\varsigma)\,\mathrm{d}\varsigma,\\
(\widetilde{J}_{2,n}^{\sigma}\psi)(t)&=\frac{\kappa_\sigma^2}{2\pi}\int_{0}^{2\pi}P_n\left\{(\cot\frac{t-\cdot}{2}+\mathrm{i})\Big(|z'(\cdot)|^2-|z'(t)|^2\Big)\psi\right\}(\varsigma)\,\mathrm{d}\varsigma,\\
(\widetilde{J}_{3,n}\psi)(t)&=\int_{0}^{2\pi}P_n\left\{\cot\frac{t-\cdot}{2}\sin(t-\cdot)\psi\right\}(\varsigma)\,\mathrm{d}\varsigma,\\
(\widetilde{J}_{4,n}\psi)(t)&=\int_{0}^{2\pi}\ln\Big(4\sin^2\frac{t-\varsigma}{2}\Big)P_n\Big\{(\cos(t-\cdot)-1)\psi\Big\}(\varsigma)\,\mathrm{d}\varsigma.
\end{align*}
Clearly, $\mathcal{S}_{0,n}\mathcal{S}_{0,n}\widetilde\varphi^n=\mathcal{S}_0\mathcal{S}_0\widetilde\varphi^n$, $\mathcal{H}_n\widetilde\varphi^n=\mathcal{H}\widetilde\varphi^n$ for $\widetilde\varphi^n\in X_n^2$.

In the following, we show the convergence of the full-discrete collocation
method \eqref{fullcollocation}. To this end, we rewrite the function
$\mathcal{B}\varphi$ in form of 
\begin{align*}
(\mathcal{B}\varphi)(t)=\int_0^{2\pi}\ln\Big(4\sin^2\frac{t-\varsigma}{2}
\Big)M(t,\varsigma)\varphi(\varsigma)\,\mathrm{d}\varsigma+\int_0^{2\pi}N(t,
\varsigma)\varphi(\varsigma)\,\mathrm{d}\varsigma,
\end{align*}
where
$$
M(t,\varsigma)=\left[\begin{array}{cc}
m_1(t,\varsigma)& m_2(t,\varsigma)\\ 
m_3(t,\varsigma)& m_4(t,\varsigma)
\end{array}
\right], \qquad N(t,\varsigma)=\left[\begin{array}{cc}
n_1(t,\varsigma)& n_2(t,\varsigma)\\ 
n_3(t,\varsigma)& n_4(t,\varsigma)
\end{array}
\right].
$$
Noting \eqref{funder} and the analyticity of
$k_j^\sigma(t,\varsigma),j=1,2$, $\widetilde{h}_2^\sigma(t,\varsigma)$,
$\widetilde{h}_3^\sigma(t,\varsigma)$ and the kernel of $\widetilde{H}_1$, we
conclude that $m_j(t,t)=\partial_t m_j(t,t)=0, j=1,2,3,4$ and $m_j, n_j$
are analytic. Recall that the full discretization of $\mathcal{B}$ is
$\mathcal{B}_n=\mathcal{B}_{1,n}+\mathcal{B}_{2,n}+\mathcal{B}_{3,n}$.

\begin{theorem}
\label{Bn-estimate}
Assume that $0\leq q\leq p$ and $p>1/2$. Then for the quadrature operator
$\mathcal{B}_n$, the following estimates hold:
\begin{align}
\label{Bestimate}
\|\mathcal{B}_n\varphi-\mathcal{B}\varphi\|_{q+2}\leq C\frac{1}{n^{p+1-q}}\|\varphi\|_{p},\quad \|\mathcal{B}_n\chi-\mathcal{B}\chi\|_{q+2}\leq \widetilde{C}\frac{1}{n^{p-q}}\|\chi\|_{p}
\end{align}
for all trigonometric polynomials $\varphi\in X_n^2$ and all $\chi\in H^p[0,2\pi]^2$, and some constants $C, \widetilde{C}$ depending on $p$ and $q$.
\end{theorem}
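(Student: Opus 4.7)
The plan is to decompose $\mathcal{B}_n - \mathcal{B}$ according to the splitting $\mathcal{B} = \mathcal{B}_1 + (\mathcal{B}_2 + \mathcal{B}_3)$, where $\mathcal{B}_1$ has the logarithmic kernel $\ln\bigl(4\sin^2\tfrac{t-\varsigma}{2}\bigr)M(t,\varsigma)$ with analytic matrix $M$ satisfying $m_j(t,t) = \partial_t m_j(t,t) = 0$, while $\mathcal{B}_2 + \mathcal{B}_3$ has the smooth analytic matrix kernel $N(t,\varsigma)$. Each operator is discretized by applying $\mathcal{P}_n$ to the product of the kernel with its argument, so in both cases the quadrature error is an integral of $\mathcal{P}_n(K(t,\cdot)\chi) - K(t,\cdot)\chi$ against a fixed (possibly logarithmic) weight in $t-\varsigma$. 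I will estimate these errors by combining the interpolation bound \eqref{interpolationerror} with the mapping properties already used in Theorems \ref{Hbounded}, \ref{compact}, and \ref{isomorphism}, following the framework for trigonometric collocation quadratures developed in Kress's book.

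For the second inequality in \eqref{Bestimate} with arbitrary $\chi \in H^p[0,2\pi]^2$, the smooth contribution is the $\varsigma$-integral of $\mathcal{P}_n(N(t,\cdot)\chi) - N(t,\cdot)\chi$, which by the analyticity of $N$ and \eqref{interpolationerror} is bounded in $H^{q+2}$ by $C n^{-(p-q)}\|\chi\|_p$ for any $0\leq q\leq p$. The logarithmic contribution is an $S_0$-type integral against $\mathcal{P}_n(M(t,\cdot)\chi) - M(t,\cdot)\chi$; since $S_0: H^{q+1}[0,2\pi] \to H^{q+2}[0,2\pi]$ is bounded (cf.\ the discussion preceding Theorem \ref{isomorphism}), the interior bracket controlled by \eqref{interpolationerror}, together with the analyticity of $M$, delivers the same rate $n^{-(p-q)}$ in $H^{q+2}$. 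Summing the two pieces gives the general estimate.

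For the sharper first inequality in \eqref{Bestimate} restricted to $\varphi \in X_n^2$, the extra gain $n^{-1}$ is extracted from the double vanishing of $M$ on the diagonal. Writing $M(t,\varsigma) = (\varsigma-t)^2\,\widetilde{M}(t,\varsigma)$ with $\widetilde{M}$ analytic and arguing as in Theorems 12.15 and 13.20 of Kress's book, one can perform an integration by parts that transfers one derivative from $\varphi$ onto the analytic kernel factor; the Bernstein-type inverse inequality $\|\varphi\|_{p+1}\leq c\,n\,\|\varphi\|_p$ available on $X_n^2$ then converts one power of $n$ in the resulting bound into the improvement from $n^{-(p-q)}$ to $n^{-(p+1-q)}$. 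The smooth contribution from $\mathcal{B}_2+\mathcal{B}_3$ is handled in the same way and is in fact simpler because $N$ is fully analytic, yielding superalgebraic convergence that absorbs into the constant.

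The main obstacle will be the careful bookkeeping of Sobolev indices through compositions of $\mathcal{P}_n$ with pointwise multiplication by analytic kernels and with the logarithmic operator $S_0$, and verifying that the integration-by-parts step in the polynomial case produces a kernel that is still admissible (analytic with at worst a Cauchy-type singularity) so that the required $H^q \to H^{q+1}$ boundedness survives. Once this is established, both inequalities in \eqref{Bestimate} follow from a single application of \eqref{interpolationerror} combined with the bounded-operator properties proved earlier.
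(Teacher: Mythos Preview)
Your approach is genuinely different from the paper's and, as written, has a real gap. The paper's proof does \emph{not} split $\mathcal{B}_n-\mathcal{B}$ into the log and smooth pieces and estimate each directly in $H^{q+2}$; instead it differentiates once in the \emph{output} variable $t$. Writing $\mathcal{B}'\varphi:=\frac{d}{dt}\mathcal{B}\varphi$ produces a log kernel $\widetilde{M}=\partial_t M$ and a smooth kernel $\widetilde{N}=M\cot\frac{t-\varsigma}{2}+\partial_t N$. The double vanishing $m_j(t,t)=\partial_t m_j(t,t)=0$ is used exactly here: it makes $\widetilde{M}(t,t)=0$ (so Kress's Lemma~13.21 applies to $\mathcal{B}'$) and makes the $\cot$-term analytic (so Theorem~12.18 applies). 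Since the quadrature is in $\varsigma$, $(\mathcal{B}_n)'=\mathcal{B}_n'$, and the $H^{q+1}$ estimates for $\mathcal{B}_n'-\mathcal{B}'$ lift immediately to the desired $H^{q+2}$ estimates for $\mathcal{B}_n-\mathcal{B}$.

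Your argument for the second inequality does not invoke the double vanishing of $M$ at all, and without it the bookkeeping fails: treating the log piece as ``$S_0$ applied to $(\mathcal{P}_n-I)[M(t,\cdot)\chi]$'' gains one Sobolev order, but then you must control the bracket in $H^{q+1}$, and \eqref{interpolationerror} for a function that is merely in $H^p$ gives only $n^{-(p-q-1)}$, one power short. (Moreover the $t$-dependence of $M$ means this is not literally an application of $S_0$.) For the first inequality, the proposed mechanism is backwards: the Bernstein inverse inequality $\|\varphi\|_{p+1}\le cn\|\varphi\|_p$ \emph{costs} a factor of $n$ when you pass to a higher norm, it does not supply one; and transferring a $\varsigma$-derivative by parts off $\varphi$ replaces $\varphi$ by an antiderivative (so you should not need the inverse inequality) but turns the log kernel into a $\cot$-kernel plus lower-order pieces, and you have not explained how the resulting operator enjoys the needed $H^q\to H^{q+2}$ quadrature estimate. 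The missing idea is precisely the paper's: differentiate in $t$, not in $\varsigma$, so that the quadrature commutes with the derivative and the problem reduces to standard $H^{q+1}$ estimates where the single-vanishing hypotheses of Kress's lemmas are met.
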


\begin{proof}
For the derivative $\frac{\rm d}{{\rm d}t}(\mathcal{B}\varphi)$, it can be written in form of
\begin{align*}
(\mathcal{B}'\varphi)(t)\overset{\text{def}}{=}\frac{\rm d}{{\rm
d}t}(\mathcal{B}\varphi)(t)=\int_0^{2\pi}\ln\Big(4\sin^2\frac{t-\varsigma}{2}
\Big)\widetilde M(t,\varsigma)
\varphi(\varsigma)\,\mathrm{d}\varsigma+\int_0^{2\pi}\widetilde N(t,\varsigma)
\varphi(\varsigma)\,\mathrm{d}\varsigma,
\end{align*}
where
$$
\widetilde M(t,\varsigma)=\left[\begin{array}{cc}
\partial_tm_1(t,\varsigma)& \partial_tm_2(t,\varsigma)\\ 
\partial_tm_3(t,\varsigma)& \partial_tm_4(t,\varsigma)
\end{array}
\right],\quad \widetilde N(t,\varsigma) =M(t,\varsigma)\cot\frac{t-\varsigma}{2}+\left[\begin{array}{cc}
\partial_tn_1(t,\varsigma)& \partial_tn_2(t,\varsigma)\\ 
\partial_tn_3(t,\varsigma)& \partial_tn_4(t,\varsigma)
\end{array}
\right].
$$
We denote the full discretization of $\mathcal{B}'$ via interpolatory quadrature by
\begin{align*}
(\mathcal{B}'_n\varphi)(t)=\int_0^{2\pi}\ln\Big(4\sin^2\frac{t-\varsigma}{2}\Big)\mathcal{P}_n\Big\{\widetilde M(t,\cdot) \varphi\Big\}(\varsigma)\,\mathrm{d}\varsigma+\int_0^{2\pi}\mathcal{P}_n\Big\{\widetilde N(t,\cdot) \varphi\Big\}(\varsigma)\,\mathrm{d}\varsigma.
\end{align*}

For $p>1/2$ and the integer $q$ satisfying $0\leq q\leq p$, from $\widetilde M(t,t)=0$ together with the analyticity of the elements in $\widetilde M(t,\varsigma)$ and $\widetilde N(t,\varsigma)$, using \cite[Lemma 13.21 and Theorem 12.18]{Kress2014-book}, we have
$$
\|\mathcal{B}'_n\varphi-\mathcal{B}'\varphi\|_{q+1}\leq C_1\frac{1}{n^{p+1-q}}\|\varphi\|_{p},\quad \|\mathcal{B}'_n\chi-\mathcal{B}'\chi\|_{q+1}\leq \widetilde{C}_1\frac{1}{n^{p-q}}\|\chi\|_{p}
$$
for all trigonometric polynomials $\varphi\in X_n^2$ and some constants $C_1$, $\widetilde{C}_1$ depending on $p$ and $q$.
Noting $\mathcal{B}'_n\varphi=\frac{\rm d}{{\rm d}t}(\mathcal{B}_n\varphi)$, the above equation implies 
\begin{align*}
\|\mathcal{B}_n\varphi-\mathcal{B}\varphi\|_{q+2}\leq C_2\frac{1}{n^{p+1-q}}\|\varphi\|_{p}, \quad \|\mathcal{B}_n\chi-\mathcal{B}\chi\|_{q+2}\leq \widetilde{C}_2\frac{1}{n^{p-q}}\|\chi\|_{p}
\end{align*}
for some constants $C_2$, $\widetilde{C}_2$ depending on $p$ and $q$.
It follows from \cite[Theorem 8.13]{Kress2014-book} that the above inequality
holds for arbitrary $q$ satisfying
$0\leq q\leq p$ and $p>1/2$, which completes the proof.
\end{proof}

In the following, the notation $a\lesssim b$ means $a\leq Cb$, where $C > 0$ is a constant depending on $p$.

\begin{theorem}
\label{Kn-estimate}
Assume that $p>1/2$. Then for the quadrature operator $\mathcal{K}_n$, the
following estimate holds: 
$$
\|\mathcal{P}_n[\mathcal{E}_n^{-1}\mathcal{K}_n-\mathcal{E}^{-1}\mathcal{K}]\varphi\|_{p+2}\lesssim \frac{1}{n}\|\varphi\|_{p}
$$
for all trigonometric polynomials $\varphi\in X_n^2$.
\end{theorem}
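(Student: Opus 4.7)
The plan is to expand $\mathcal{K}_n-\mathcal{K}$ using $\mathcal{K}=\mathcal{H}\mathcal{B}+\mathcal{B}\mathcal{H}+\mathcal{B}^2$ together with the analogous formula for $\mathcal{K}_n$, and then to estimate each resulting piece separately using Theorem \ref{Bn-estimate}, the interpolation bound \eqref{interpolationerror}, and the mapping properties of Theorems \ref{Hbounded} and \ref{compact}. Two preliminary observations will simplify matters. First, the definition $E_n^{\sigma}\psi=\kappa_{\sigma}^2|z'|^2\psi=E^{\sigma}\psi$ gives $\mathcal{E}_n=\mathcal{E}$, and $\mathcal{E}^{-1}$ is multiplication by an analytic non-vanishing function, so it is bounded on every $H^{p+2}[0,2\pi]^2$; combined with the boundedness of $\mathcal{P}_n$ on $H^{p+2}[0,2\pi]^2$ (valid because $p+2>1/2$), this reduces the theorem to showing $\|(\mathcal{K}_n-\mathcal{K})\varphi\|_{p+2}\lesssim n^{-1}\|\varphi\|_p$. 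Second, since $\mathcal{H}_n=\mathcal{H}\mathcal{P}_n$ and $\mathcal{P}_n\varphi=\varphi$ for $\varphi\in X_n^2$, I have $\mathcal{H}_n\varphi=\mathcal{H}\varphi$, which trims the telescoping expansion of $(\mathcal{K}_n-\mathcal{K})\varphi$ down to
\[
\mathcal{H}\mathcal{P}_n(\mathcal{B}_n-\mathcal{B})\varphi+\mathcal{H}(\mathcal{P}_n-I)\mathcal{B}\varphi+(\mathcal{B}_n-\mathcal{B})\mathcal{H}\varphi+\mathcal{B}_n(\mathcal{B}_n-\mathcal{B})\varphi+(\mathcal{B}_n-\mathcal{B})\mathcal{B}\varphi.
\]

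I will bound four of these pieces by routine arguments. The first uses the trigonometric-polynomial estimate of Theorem \ref{Bn-estimate} at $q=p$, yielding $\|(\mathcal{B}_n-\mathcal{B})\varphi\|_{p+2}\lesssim n^{-1}\|\varphi\|_p$, after which the $H^{p+2}$-boundedness of $\mathcal{P}_n$ and of $\mathcal{H}$ finishes. The second uses that $\mathcal{B}:H^p\to H^{p+3}$ is bounded (from the proof of Theorem \ref{compact}) together with \eqref{interpolationerror} at source regularity $p+3$ and target $p+2$, giving $\|(\mathcal{P}_n-I)\mathcal{B}\varphi\|_{p+2}\lesssim n^{-1}\|\varphi\|_p$. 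For the fourth, I split $\mathcal{B}_n=\mathcal{B}+(\mathcal{B}_n-\mathcal{B})$ and apply it to $h:=(\mathcal{B}_n-\mathcal{B})\varphi\in H^{p+2}$: the bound $\|h\|_{p+2}\lesssim n^{-1}\|\varphi\|_p$ combined with $\mathcal{B}$ bounded on $H^{p+2}$ handles $\mathcal{B}h$, while the general-function estimate of Theorem \ref{Bn-estimate} with $s=p+2$, $q=p$ gives $\|(\mathcal{B}_n-\mathcal{B})h\|_{p+2}\lesssim n^{-3}\|\varphi\|_p$. The fifth uses $\mathcal{B}\varphi\in H^{p+3}$ and the general estimate at $s=p+3$, $q=p$, yielding the even stronger rate $n^{-3}\|\varphi\|_p$.

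The hard part will be the third piece $(\mathcal{B}_n-\mathcal{B})\mathcal{H}\varphi$, because $\mathcal{H}$ is degenerate and does not regularize: only $\mathcal{H}\varphi\in H^p$ is available, and a direct application of Theorem \ref{Bn-estimate} would produce an unusable $O(1)$ bound. The decisive idea will be to invoke the splitting $\mathcal{H}=\mathcal{H}_1+\mathcal{H}_2$ of Theorem \ref{Hbounded}. Since $H_1$ is the Fourier multiplier $\zeta_m=\mathrm{i}\,\mathrm{sign}(m)$ (with $\zeta_0=\mathrm{i}$), it preserves each $X_n$, so $\mathcal{H}_1\varphi\in X_n^2$; the trigonometric-polynomial estimate of Theorem \ref{Bn-estimate} therefore yields $\|(\mathcal{B}_n-\mathcal{B})\mathcal{H}_1\varphi\|_{p+2}\lesssim n^{-1}\|\mathcal{H}_1\varphi\|_p\lesssim n^{-1}\|\varphi\|_p$. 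On the other hand $\mathcal{H}_2:H^p\to H^{p+2}$ lifts $\varphi$ into $H^{p+2}$, so the general-function estimate with $s=p+2$, $q=p$ delivers $\|(\mathcal{B}_n-\mathcal{B})\mathcal{H}_2\varphi\|_{p+2}\lesssim n^{-2}\|\varphi\|_p$. Summing the five contributions and applying the bounded operators $\mathcal{E}^{-1}$ and $\mathcal{P}_n$ in succession then completes the proof.
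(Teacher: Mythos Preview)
Your proof is correct and follows essentially the same strategy as the paper's: both reduce to bounding $\|(\mathcal{K}_n-\mathcal{K})\varphi\|_{p+2}$, telescope into the three building blocks $\mathcal{H}\mathcal{B}$, $\mathcal{B}\mathcal{H}$, $\mathcal{B}^2$, and rely on the same decisive observation that $\mathcal{H}_1\varphi\in X_n^2$ to handle the delicate term $(\mathcal{B}_n-\mathcal{B})\mathcal{H}\varphi$. The only noteworthy difference is organizational: for that term the paper first inserts $\mathcal{P}_n\mathcal{H}\varphi$ and then uses $\mathcal{H}_1\varphi-\mathcal{P}_n\mathcal{H}_1\varphi=0$, whereas you split $\mathcal{H}=\mathcal{H}_1+\mathcal{H}_2$ directly and apply the trigonometric-polynomial estimate of Theorem~\ref{Bn-estimate} to $\mathcal{H}_1\varphi$ and the general-function estimate to $\mathcal{H}_2\varphi\in H^{p+2}$; your route is slightly more streamlined but the content is the same. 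Likewise, for the $\mathcal{H}\mathcal{B}$ piece the paper works in the $\|\cdot\|_{p+2,*}$ norm from Theorem~\ref{Hbounded} while you use the (weaker, but sufficient) consequence that $\mathcal{H}$ is bounded on $H^{p+2}[0,2\pi]^2$.
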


\begin{proof}
From Theorem \ref{Hbounded} and the estimate \eqref{interpolationerror}, $\forall\chi\in H^p[0,2\pi]^2$, we have
\begin{align*}
\|(\mathcal{H}_n-\mathcal{H})\chi\|_{q,*}=\|\mathcal{H}(\mathcal{P}_n\chi-\chi)\|_{q,*}\leq C_1\|(\mathcal{P}_n\chi-\chi)\|_{q}\leq\frac{C_2}{n^{p-q}}\|\chi\|_{p}
\end{align*} 
for $0\leq q\leq p$, $p\geq1/2$ and some constants $C_1$ depending on $q$ and $C_2$ depending on $p$ and $q$. Then, $\mathcal{H}_n$ and $\mathcal{H}_n-\mathcal{H}$ are uniformly bounded from $H^p[0,2\pi]^2$ to $H^p_*[0,2\pi]^2$ for $p\geq1/2$. Clearly, $\mathcal{E}_n^{-1}\chi=\mathcal{E}^{-1}\chi$.

For all trigonometric polynomials $\varphi\in X_n^2$ of the form \eqref{trigopoly}, using Theorem \ref{Bn-estimate} and the fact that $\mathcal{H}_n\varphi=\mathcal{H}\varphi$, we get
\begin{align*}
&\|\mathcal{H}_n\mathcal{B}_n\varphi-\mathcal{H}\mathcal{B}\varphi\|_{p+2}
\leq\|\mathcal{H}_n\mathcal{B}_n\varphi-\mathcal{H}\mathcal{B}\varphi\|_{p+2,*}
\\
& \leq 
\|\mathcal{H}_n(\mathcal{B}_n-\mathcal{B})\varphi\|_{p+2,*}+\|(\mathcal{H}
_n-\mathcal{H})(\mathcal{B}\varphi-\mathcal{P}_n\mathcal{B}\varphi)\|_{p+2,*}
+\|(\mathcal{H}_n-\mathcal{H})\mathcal{P}_n\mathcal{B}\varphi\|_{p+2,*}\\
& \lesssim \|(\mathcal{B}_n-\mathcal{B})\varphi\|_{p+2}+\|\mathcal{B}
\varphi-\mathcal{P} _n\mathcal{B}\varphi\|_{p+2}\\
& \lesssim 1/n\|\varphi\|_{p}+1/n\|\mathcal{B}\varphi\|_{p+3}
\lesssim1/n\|\varphi\|_{p}
\end{align*}
Furthermore, it follows from Theorem \ref{Bn-estimate} that
$\mathcal{B}_n$ and $\mathcal{B}_n-\mathcal{B}$ are uniformly bounded from
$H^p[0,2\pi]^2$ to $H^{p+2}[0,2\pi]^2$ for $p\geq1/2$. Thus, using
\eqref{interpolationerror}, \eqref{Bestimate} and the uniform boundedness of
$\mathcal{P}_n: H^{p+2}[0,2\pi]^2\rightarrow H^{p+2}[0,2\pi]^2$, together with
the boundedness of $\mathcal{B}: H^p[0,2\pi]^2\rightarrow H^{p+3}[0,2\pi]^2$,
we deduce
\begin{align*}
\|\mathcal{B}^2_n\varphi-\mathcal{B}^2\varphi\|_{p+2}
&\leq \|\mathcal{B}^2_n\varphi-\mathcal{B}^2\varphi\|_{p+4}\\
&\leq \|\mathcal{B}_n(\mathcal{B}_n-\mathcal{B})\varphi\|_{p+4}+\|(\mathcal{B}_n-\mathcal{B})(\mathcal{B}\varphi-\mathcal{P}_n\mathcal{B}\varphi)\|_{p+4}+\|(\mathcal{B}_n-\mathcal{B})\mathcal{P}_n\mathcal{B}\varphi\|_{p+4}\\
&\lesssim  \|(\mathcal{B}_n-\mathcal{B})\varphi\|_{p+2}+\|(\mathcal{B}\varphi-\mathcal{P}_n\mathcal{B}\varphi)\|_{p+2}+1/n\|\mathcal{P}_n\mathcal{B}\varphi\|_{p+2}\\
&\lesssim  1/n\|\varphi\|_{p}+1/n\|\mathcal{B}\varphi\|_{p+3}+1/n\|\mathcal{B}\varphi\|_{p+2}
\lesssim 1/n\|\varphi\|_{p}. 
\end{align*}
Noting $\mathcal{H}_1\varphi\in X_n^2$, we obtain 
\begin{align*}
\|\mathcal{B}_n\mathcal{H}_n\varphi-\mathcal{B}\mathcal{H}\varphi\|_{p+2}&=\|(\mathcal{B}_n-\mathcal{B})\mathcal{H}\varphi\|_{p+2}\\
&\leq  \|(\mathcal{B}_n-\mathcal{B})(\mathcal{H}\varphi-\mathcal{P}_n\mathcal{H}\varphi)\|_{p+2}+\|(\mathcal{B}_n-\mathcal{B})\mathcal{P}_n\mathcal{H}\varphi\|_{p+2}\\
&\lesssim  \|\mathcal{H}\varphi-\mathcal{P}_n\mathcal{H}\varphi\|_{p}+1/n\|\mathcal{P}_n\mathcal{H}\varphi\|_p\\
&\leq \|\mathcal{H}_1\varphi-\mathcal{P}_n\mathcal{H}_1\varphi\|_{p}+\|\mathcal{H}_2\varphi-\mathcal{P}_n\mathcal{H}_2\varphi\|_{p}+1/n\|\mathcal{P}_n\mathcal{H}\varphi\|_p\\
&\lesssim 
1/n^2\|\mathcal{H}_2\varphi\|_{p+2}+1/n\|\mathcal{H}\varphi\|_p
\lesssim 1/n\|\varphi\|_{p}.
\end{align*}
Therefore
\begin{align*}
\|\mathcal{K}_n\varphi-\mathcal{K}\varphi\|_{p+2}
\leq
\|\mathcal{H}_n\mathcal{B}_n\varphi-\mathcal{H}\mathcal{B}\varphi\|_{p+2}+\|\mathcal{B}_n\mathcal{H}_n\varphi-\mathcal{B}\mathcal{H}\varphi\|_{p+2}+\|\mathcal{B}_n^2\varphi-\mathcal{B}^2\varphi\|_{p+2}
\lesssim 1/n\|\varphi\|_{p}.
\end{align*}
The proof is completed by using the uniform boundedness of the
operator $\mathcal{E}^{-1},\mathcal{P}_n: H^{p+2}[0,2\pi]^2\rightarrow
H^{p+2}[0,2\pi]^2$. 
\end{proof}

\begin{theorem}
\label{Jn-estimate}
Assume that $p>1/2$. Then for the quadrature operator $\mathcal{J}_n$, the
following estimate holds: 
$$
\|\mathcal{P}_n[\mathcal{E}_n^{-1}\mathcal{J}_n-\mathcal{E}^{-1}\mathcal{J}]\varphi\|_{p+2}
\lesssim 
\frac{1}{n}\|\varphi\|_{p}
$$
for all trigonometric polynomials $\varphi\in X_n^2$.
\end{theorem}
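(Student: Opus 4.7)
My plan is to mirror the structure of Theorem \ref{Kn-estimate} very closely. Since $\mathcal{E}^{-1}_n$ coincides with $\mathcal{E}^{-1}$ (both are multiplication by the same analytic, non-vanishing weight, cf.\ Theorem \ref{Ebounded}) and $\mathcal{P}_n\mathcal{E}^{-1}$ is uniformly bounded on $H^{p+2}[0,2\pi]^2$ for $p>1/2$, the first step is to reduce the statement to showing
\[
\|(\mathcal{J}_n-\mathcal{J})\varphi\|_{p+2}\lesssim \tfrac{1}{n}\|\varphi\|_{p}
\]
for every $\varphi\in X_n^2$. I would then split $\mathcal{J}_n-\mathcal{J}=\sum_{j=1}^4(\mathcal{J}_{j,n}-\mathcal{J}_j)$ and estimate each piece using the kernel structure from the proof of Theorem \ref{isomorphism} together with Theorem \ref{Bn-estimate}.

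The simplification I would exploit throughout is that $\mathcal{P}_n\varphi=\varphi$ for $\varphi\in X_n^2$, so $H_{2,n}\varphi=H_2\varphi$ and $E^\sigma_n\varphi=E^\sigma\varphi$. For $\mathcal{J}_{1,n}-\mathcal{J}_1$, the block involving $E^{\mathfrak s}J^{\mathfrak p}_{\cdot}H_{2}$ collapses to $E^{\mathfrak s}(J^{\mathfrak p}_{1,n}-J^{\mathfrak p}_1)(H_2\varphi)$. The kernel of $J_1^\sigma$ is analytic and contains the factor $|z'(\varsigma)|^2-|z'(t)|^2$ that vanishes on the diagonal, so the same quadrature estimates used to prove Theorem \ref{Bn-estimate} (namely \cite[Lemma 13.21 and Theorem 12.18]{Kress2014-book}) give $\|(J^\sigma_{1,n}-J^\sigma_1)\chi\|_{p+2}\lesssim n^{-1}\|\chi\|_{p+1}$; combined with the two-order smoothing of $H_2$ and the boundedness of $E^\sigma$ this produces the required $n^{-1}\|\varphi\|_p$ bound. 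The block involving $H_{2,n}H_{2,n}$ reduces to $E^{\mathfrak s}E^{\mathfrak p}H_2(P_n-I)H_2\varphi$, which is handled by applying the interpolation bound \eqref{interpolationerror} to $H_2\varphi\in H^{p+2}$ and then using the smoothing order of the outer $H_2$.

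The piece $\mathcal{J}_{2,n}-\mathcal{J}_2$ follows the same pattern because $\widetilde{J}^\sigma_2$ also has an analytic kernel with vanishing diagonal (again through $|z'(\varsigma)|^2-|z'(t)|^2$), so that Theorem \ref{Bn-estimate}-type estimates apply directly. For $\mathcal{J}_{3,n}-\mathcal{J}_3$ and $\mathcal{J}_{4,n}-\mathcal{J}_4$, the inner kernels $\cot\frac{t-\cdot}{2}\sin(t-\cdot)=1+\cos(t-\cdot)$ and $\cos(t-\cdot)-1$ are analytic and $2\pi$-periodic, which makes $\widetilde{J}_{3,n}-\widetilde{J}_3$ and $\widetilde{J}_{4,n}-\widetilde{J}_4$ spectrally small by \cite[Theorem 12.18]{Kress2014-book}. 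The outer logarithmic operator enters through $\widetilde{S}_{0,n}-\widetilde{S}_0=\widetilde{S}_0(P_n-I)$, and since $\widetilde{S}_0$ gains one derivative, two applications of \eqref{interpolationerror} keep the loss within $n^{-1}$ in the $\|\cdot\|_{p+2}$ norm.

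\textbf{Main obstacle.} The principal technical bookkeeping lies in the composite discretizations $H_{2,n}H_{2,n}$ (in $\mathcal{J}_1$) and $\widetilde{S}_{0,n}\widetilde{J}_{j,n}$ (in $\mathcal{J}_3,\mathcal{J}_4$), where the interpolant $P_n$ must be inserted between two operators and commuted around. The target norm $\|\cdot\|_{p+2}$ is only two orders above $\|\cdot\|_p$, so one can afford to lose at most $n^{-1}$ after using at most one unit of regularity gain from the outer smoothing operator. Executing the estimates requires carefully matching the interpolation error rate in \eqref{interpolationerror} with the smoothing order of $H_2$ or $\widetilde{S}_0$ on each of the four summands; everything else is a direct appeal to Theorems \ref{Ebounded}, \ref{Hbounded}, \ref{compact}, and \ref{Bn-estimate}.
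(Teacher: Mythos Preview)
Your proposal is correct and follows essentially the same route as the paper: reduce to $\|(\mathcal{J}_n-\mathcal{J})\varphi\|_{p+2}\lesssim n^{-1}\|\varphi\|_p$ via $\mathcal{E}_n^{-1}=\mathcal{E}^{-1}$ and the uniform boundedness of $\mathcal{P}_n$, then estimate each $\mathcal{J}_{j,n}-\mathcal{J}_j$ separately using the kernel structure from Theorem~\ref{isomorphism} and quadrature bounds in the style of Theorem~\ref{Bn-estimate}. Two small remarks: since $H_2$ is diagonal in the Fourier basis, $H_2\varphi_j\in X_n$ for $\varphi\in X_n^2$, so the block $H_{2,n}H_{2,n}\varphi-H_2H_2\varphi$ is in fact identically zero and needs no separate interpolation argument; and the kernel of $\widetilde{J}_2^\sigma$ is analytic but does \emph{not} vanish on the diagonal (the $\cot$ singularity cancels only one order of the factor $|z'(\varsigma)|^2-|z'(t)|^2$), which is harmless because smooth-kernel quadrature estimates (\cite[Theorem~12.18]{Kress2014-book}) suffice there without any diagonal vanishing.
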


\begin{proof}
For all trigonometric polynomials $\varphi\in X_n^2$, we claim that
$$
\|\mathcal{J}_n\varphi-\mathcal{J}\varphi\|_{p+2}
\lesssim 
\frac{1}{n}\|\varphi\|_{p}.
$$
In fact, analogous to the discussion in Theorem \ref{Bn-estimate}, we get
\begin{align*}
\|J_{1,n}^{\sigma}\psi-J_{1}^{\sigma}\psi\|_{q+2}
&\leq 
C\frac{1}{n^{p+1-q}}\|\psi\|_{p},\qquad 0\leq q\leq p, \quad \frac{1}{2}<p,\\
\|\widetilde{J}_{4,n}\psi-\widetilde{J}_{4}\psi\|_{q+2}
&\leq 
C\frac{1}{n^{p+1-q}}\|\psi\|_{p},\qquad 0\leq q\leq p, \quad \frac{1}{2}<p, 
\end{align*}
for all trigonometric polynomials $\psi\in X_n$ and some constant $C$ depending on $p$ and $q$. Since $E_n^{\sigma}\varphi_j=E^{\sigma}\varphi_j$, 
$H_{2,n}\varphi_j=H_2\varphi_j\in X_n$, $j=1,2$, we have
\begin{align*}
\|\mathcal{J}_{1,n}\varphi-\mathcal{J}_1\varphi\|_{p+2}
&=
\|(E_n^{\mathfrak s}J_{1,n}^{\mathfrak p}H_{2,n}-E^{\mathfrak s}J_{1}^{\mathfrak p}H_{2})\varphi_1\|_{p+2}+\|(E_n^{\mathfrak p}J_{1,n}^{\mathfrak s}H_{2,n}-E^{\mathfrak p}J_{1}^{\mathfrak s}H_{2})\varphi_2\|_{p+2}\\
&\lesssim
\|(J_{1,n}^{\mathfrak p}-J_{1}^{\mathfrak p})H_{2}\varphi_1\|_{p+2}+\|(J_{1,n}^{\mathfrak s}-J_{1}^{\mathfrak s})H_{2}\varphi_2\|_{p+2}\\
&\leq
\|(J_{1,n}^{\mathfrak p}-J_{1}^{\mathfrak p})H_{2}\varphi_1\|_{p+4}+\|(J_{1,n}^{\mathfrak s}-J_{1}^{\mathfrak s})H_{2}\varphi_2\|_{p+4}\\
&\lesssim
1/n\|H_{2}\varphi_1\|_{p+2}+1/n\|H_{2}\varphi_2\|_{p+2}
\lesssim
1/n\|\varphi\|_{p}.
\end{align*}
Since the operator $\widetilde{J}_2^{\sigma}$ has an analytic kernel, it is easy to see
$$
\|\mathcal{J}_{2,n}\varphi-\mathcal{J}_2\varphi\|_{p+2}
\lesssim
\frac{1}{n}\|\varphi\|_{p}.
$$

In addition, in terms of $\widetilde{S}_{0,n}\psi=\widetilde{S}_{0}\psi$ for $\psi\in X_n$ and the uniform boundedness of $\widetilde{S}_{0,n}$ and $\widetilde{S}_{0,n}-\widetilde{S}_{0}$ from $H^p[0,2\pi]^2$ to $H^{p+1}[0,2\pi]^2$ for $p\geq1/2$, we obtain
\begin{align*}
\|\mathcal{J}_{4,n}\varphi-\mathcal{J}_4\varphi\|_{p+2}
&=
\frac{1}{8\pi^2}\sum_{j=1}^{2}
\Big\|\Big((E_n^{\mathfrak p}+E_n^{\mathfrak s})\widetilde{S}_{0,n}\widetilde{J}_{4,n}-(E^{\mathfrak p}+E^{\mathfrak s})\widetilde{S}_{0}\widetilde{J}_{4}\Big)\varphi_j\Big\|_{p+2}\\
&\lesssim
\sum_{j=1}^{2}
\|(\widetilde{S}_{0,n}\widetilde{J}_{4,n}-\widetilde{S}_{0}\widetilde{J}_{4})\varphi_j\|_{p+2}\\
&\leq
\sum_{j=1}^{2}\left(
\|\widetilde{S}_{0,n}(\widetilde{J}_{4,n}-\widetilde{J}_{4})\varphi_j\|_{p+2}+\|(\widetilde{S}_{0,n}-\widetilde{S}_{0})(\widetilde{J}_{4}\varphi_j-P_n\widetilde{J}_{4}\varphi_j)\|_{p+2}\right)\\
&\leq
\sum_{j=1}^{2}\left(
\|\widetilde{S}_{0,n}(\widetilde{J}_{4,n}-\widetilde{J}_{4})\varphi_j\|_{p+3}+\|(\widetilde{S}_{0,n}-\widetilde{S}_{0})(\widetilde{J}_{4}\varphi_j-P_n\widetilde{J}_{4}\varphi_j)\|_{p+3}\right)\\
&\lesssim
\sum_{j=1}^{2}\left(
\|(\widetilde{J}_{4,n}-\widetilde{J}_{4})\varphi_j\|_{p+2}+\|(\widetilde{J}_{4}\varphi_j-P_n\widetilde{J}_{4}\varphi_j)\|_{p+2}\right)\\
&\lesssim
\sum_{j=1}^{2}\left(
\frac{1}{n}\|\varphi_j\|_{p+2}+\frac{1}{n}\|(\widetilde{J}_{4}\varphi_j\|_{p+3}\right)
\lesssim
\frac{1}{n}\|\varphi\|_{p}.
\end{align*}
Since the operator $\widetilde{J}_3$ has an analytic kernel, similarly we get
$$
\|\mathcal{J}_{3,n}\varphi-\mathcal{J}_3\varphi\|_{p+2}
\lesssim
\frac{1}{n}\|\varphi\|_{p}.
$$

Hence, the assertion of the theorem follows by using the uniform boundedness of the operator $\mathcal{E}^{-1},\mathcal{P}_n: H^{p+2}[0,2\pi]^2\rightarrow H^{p+2}[0,2\pi]^2$.
\end{proof}

\begin{theorem}
\label{fullconvergence}
For sufficiently large $n$, the approximate equation \eqref{fullcollocation} is
uniquely solvable and the solution satisfies the error estimate
\begin{align}\label{errorestimate2}
\begin{split}
\|\widetilde\varphi^n-\varphi\|_p&\leq L\Big\{\|\mathcal{P}_n\mathcal{S}_0\mathcal{S}_0\varphi-\mathcal{S}_0\mathcal{S}_0\varphi\|_{p+2}+\|\mathcal{P}_n[\mathcal{E}_n^{-1}(\mathcal{J}_n+\mathcal{K}_n)-\mathcal{E}^{-1}(\mathcal{J}+\mathcal{K})]\varphi\|_{p+2}\Big\}\\
&\quad
+L\|\mathcal{P}_n[\mathcal{E}_n^{-1}\mathcal{A}_n-\mathcal{E}^{-1}\mathcal {A}]
w\|_{p+2},
\end{split}
\end{align}
where $L$ is a positive constant.
\end{theorem}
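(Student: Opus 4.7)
The plan is to reduce this to a standard stability-plus-consistency argument, mirroring the semi-discrete proof but with the quadrature perturbations controlled by Theorems \ref{Bn-estimate}, \ref{Kn-estimate}, and \ref{Jn-estimate}. First, I would observe that since the trigonometric monomials $f_m = \mathrm{e}^{\mathrm{i} mt}$ are eigenfunctions of $S_0$, the operator $\mathcal{S}_0\mathcal{S}_0$ preserves $X_n^2$, so $\mathcal{S}_{0,n}\mathcal{S}_{0,n}\widetilde\varphi^n = \mathcal{S}_0\mathcal{S}_0\widetilde\varphi^n = \mathcal{P}_n\mathcal{S}_0\mathcal{S}_0\widetilde\varphi^n$. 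Consequently \eqref{fullcollocation} is equivalent to
\begin{equation*}
\mathcal{P}_n\bigl\{\mathcal{S}_0\mathcal{S}_0 + \mathcal{E}_n^{-1}(\mathcal{J}_n+\mathcal{K}_n)\bigr\}\widetilde\varphi^n = \mathcal{P}_n(\mathcal{E}_n^{-1}\mathcal{A}_n) w,
\end{equation*}
which is a genuine projection method applied to \eqref{Matrixequation2}.

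Second, I would establish the existence and uniform boundedness of the inverse of the discrete operator for large $n$. Since $\mathcal{S}_0\mathcal{S}_0 \colon H^p[0,2\pi]^2 \to H^{p+2}[0,2\pi]^2$ is an isomorphism and $\mathcal{E}^{-1}(\mathcal{J}+\mathcal{K})$ is compact into $H^{p+2}[0,2\pi]^2$ (by Theorems \ref{compact} and \ref{isomorphism}), the continuous operator on the left-hand side of \eqref{Matrixequation2} is a Fredholm operator of index zero and is invertible (injectivity follows from the equivalence of \eqref{Matrixequation2} with \eqref{Matrixequation} combined with \cite[Theorems 4.1 and 4.8]{LL2019}). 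Combining Theorems \ref{Kn-estimate} and \ref{Jn-estimate} yields
\begin{equation*}
\bigl\|\mathcal{P}_n[\mathcal{E}_n^{-1}(\mathcal{J}_n+\mathcal{K}_n)]\varphi - \mathcal{E}^{-1}(\mathcal{J}+\mathcal{K})\varphi\bigr\|_{p+2} \lesssim \tfrac{1}{n}\|\varphi\|_p
\end{equation*}
for all $\varphi \in X_n^2$, while \eqref{interpolationerror} gives $\|\mathcal{P}_n\mathcal{S}_0\mathcal{S}_0\varphi - \mathcal{S}_0\mathcal{S}_0\varphi\|_{p+2}\to 0$ for each fixed $\varphi$. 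An application of the Anselone--Kress perturbation theorem \cite[Theorem 13.12]{Kress2014-book} (or the collectively-compact convergence framework) then produces the unique solvability of \eqref{fullcollocation} and the uniform boundedness of the discrete inverse in $\mathcal{L}(H^{p+2},H^p)$.

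Third, to obtain the error bound I would subtract the projected continuous equation from the discrete equation, writing
\begin{equation*}
\mathcal{P}_n\bigl\{\mathcal{S}_0\mathcal{S}_0 + \mathcal{E}_n^{-1}(\mathcal{J}_n+\mathcal{K}_n)\bigr\}(\widetilde\varphi^n - \varphi) = R_1 + R_2 + R_3,
\end{equation*}
with the three residuals
$R_1 = \mathcal{P}_n\mathcal{S}_0\mathcal{S}_0\varphi - \mathcal{S}_0\mathcal{S}_0\varphi$,
$R_2 = \mathcal{E}^{-1}(\mathcal{J}+\mathcal{K})\varphi - \mathcal{P}_n[\mathcal{E}_n^{-1}(\mathcal{J}_n+\mathcal{K}_n)]\varphi$, and
$R_3 = \mathcal{P}_n[\mathcal{E}_n^{-1}\mathcal{A}_n - \mathcal{E}^{-1}\mathcal{A}]w$.
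Applying the uniformly bounded discrete inverse and estimating in the $H^{p+2}$-norm produces exactly \eqref{errorestimate2} after absorbing $R_2$ into $R_1$ and $R_3$ via the triangle inequality (or simply by noting that $\|R_2\|_{p+2}$ is already of the form appearing as the second summand).

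The main technical obstacle will be verifying the hypotheses of the perturbation theorem in the mixed norm setting: one must show not merely that the perturbation is small on the finite-dimensional space $X_n^2$ (which Theorems \ref{Kn-estimate} and \ref{Jn-estimate} provide), but that the discretized compact piece converges collectively (or strongly, together with uniform boundedness) on the whole space $H^p[0,2\pi]^2$ so that invertibility is inherited from the continuous operator. This requires combining the on-$X_n^2$ quadrature estimates with the interpolation estimate \eqref{interpolationerror} applied to $\mathcal{E}^{-1}(\mathcal{J}+\mathcal{K})\chi$ for arbitrary $\chi \in H^p[0,2\pi]^2$, which is handled because $\mathcal{E}^{-1}(\mathcal{J}+\mathcal{K})$ maps into $H^{p+3}$ by the proofs of Theorems \ref{compact} and \ref{isomorphism}, giving the extra regularity needed to close the argument.
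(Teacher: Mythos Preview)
Your proposal is correct and follows essentially the same route as the paper. The only notable discrepancies are cosmetic: the paper invokes \cite[Corollary 13.13]{Kress2014-book} rather than Theorem 13.12 (since the compact part is itself being perturbed by quadrature), and to pass from the on-$X_n^2$ estimates of Theorems \ref{Kn-estimate}--\ref{Jn-estimate} to pointwise convergence on all of $H^p[0,2\pi]^2$ the paper first argues uniform boundedness of $\mathcal{P}_n[\mathcal{E}_n^{-1}(\mathcal{J}_n+\mathcal{K}_n)-\mathcal{E}^{-1}(\mathcal{J}+\mathcal{K})]$ (by revisiting those proofs with the second estimate in \eqref{Bestimate}) and then applies the Banach--Steinhaus theorem, which is exactly the mechanism you sketch in your final paragraph.
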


\begin{proof}
For all trigonometric polynomials $\varphi\in X_n^2$, it follows from Theorems
\ref{Kn-estimate} and \ref{Jn-estimate} that 
$$
\|\mathcal{P}_n[\mathcal{E}_n^{-1}(\mathcal{J}_n+\mathcal{K}_n)-\mathcal{E}^{-1}(\mathcal{J}+\mathcal{K})]\varphi\|_{p+2}\lesssim\frac{1}{n}\|\varphi\|_p\rightarrow0, \qquad n\rightarrow\infty
$$
for all $p>1/2$. Moreover, it is easy to see the estimates for
$\|(J_{1,n}^{\sigma}-J_{1}^{\sigma})\psi\|_{p+2}$ and
$\|(\widetilde{J}_{4,n}-\widetilde{J}_{4})\psi\|_{p+2}$  are valid analogously
as \eqref{Bestimate}. Then, we can obtain the uniform boundedness of the
operator
$\mathcal{P}_n[\mathcal{E}_n^{-1}(\mathcal{J}_n+\mathcal{K}_n)-\mathcal{E}^{-1}
(\mathcal{J}+\mathcal{K})]: H^p[0,2\pi]^2\rightarrow H^{p+2}[0,2\pi]^2$ from the
proofs of Theorems \ref{Kn-estimate} and  \ref{Jn-estimate}.
By the Banach--Steinhaus theorem (cf. \cite[Problem 10.1]{Kress2014-book}), we
get the pointwise convergence
\begin{align*}
\mathcal{P}_n[\mathcal{E}_n^{-1}(\mathcal{J}_n+\mathcal{K}_n)]
\varphi\rightarrow\mathcal{P}_n[\mathcal{E}^{-1}(\mathcal{J}+\mathcal{K})]
\varphi \quad \text{as}\quad n\rightarrow\infty
\end{align*}
for all $\varphi\in H^p[0,2\pi]^2$. 

Hence \eqref{errorestimate2} follows by employing \cite[Corollary
13.13]{Kress2014-book}, $\mathcal{E}_n^{-1}=\mathcal{E}^{-1}$ and the uniform
boundedness of the operator $\mathcal{E}_n^{-1},\mathcal{P}_n:
H^{p+2}[0,2\pi]^2\rightarrow H^{p+2}[0,2\pi]^2$.
\end{proof}

The above theorem implies that the full-discrete collocation method
\eqref{fullcollocation} converges in $H^p[0,2\pi]^2$ for each $p>1/2$.


\section{Numerical experiments}

In practice, instead of \eqref{fullcollocation}, we only need to solve the
equivalent full-discrete equation
\begin{align}\label{transformfull}
\mathcal{P}_n\mathcal{H}_n\widetilde\varphi^{n}+\mathcal{P}_n\mathcal{B}
_n\widetilde\varphi^{n}=\mathcal{P}_nw 
~\Leftrightarrow~
A_n\widetilde\varphi^{n}=w^{n},
\end{align}
where $A_n$ is a coefficient matrix of the full-discrete equation. In fact, suppose that $\beta(t)=\frac{\kappa_{\mathfrak p}^2+\kappa_{\mathfrak s}^2}{8\pi^2}|z'(t)|^2$, the equation \eqref{fullcollocation} is equivalent to
\begin{align}
&\mathcal{P}_n(\mathcal{S}_{0,n}\mathcal{S}_{0,n})\widetilde\varphi^n+\mathcal{P}_n[\beta^{-1}(\mathcal{J}_n+\mathcal{K}_n)\widetilde\varphi^n]=\mathcal{P}_n[\beta^{-1}\mathcal{A}_nw] 
\nonumber\\
~\Leftrightarrow~&
\mathcal{P}_n[\beta(\mathcal{S}_{0,n}\mathcal{S}_{0,n}\widetilde\varphi^n)]+\mathcal{P}_n[(\mathcal{J}_n+\mathcal{K}_n)\widetilde\varphi^n]=\mathcal{P}_n(\mathcal{A}_nw)
\nonumber\\
\label{equivalent eqn}
~\Leftrightarrow~&
A_n^2\widetilde\varphi^n=A_nw^n.
\end{align}
Since \eqref{fullcollocation} is uniquely solvable by Theorem
\ref{fullconvergence}, it implies that the matrix $A_n^2$ is invertible, i.e.,
$\det{A}_n^2=(\det{A}_n)^2\not=0$, and consequently ${A}_n$ is invertible.
Hence, \eqref{fullcollocation} is equivalent to \eqref{transformfull} by
multiplying matrix ${A}_n^{-1}$ on both ends of the equation \eqref{equivalent
eqn}. It is worth mentioning that the equivalent full-discrete equation
\eqref{transformfull} is extremely efficient since it is established via simple
quadrature operators $\mathcal{H}_n$ and $\mathcal{B}_n$.

For the smooth integrals, we simply use the trapezoidal rule 
\begin{align*} 
\int_{0}^{2\pi}f(\varsigma)\mathrm{d}\varsigma\approx\frac{\pi}{n}\sum_{j=0}^{
	2n-1}f(\varsigma_j^{(n)}).
\end{align*}
For the singular integrals, we employ the following
quadrature rules via the trigonometric interpolation:
\begin{align}\label{quadrature}
\begin{split} 
\int_{0}^{2\pi}\ln\Big(4\sin^2\frac{t-\varsigma}{2}\Big)f(\varsigma)\,\mathrm{d}
\varsigma&\approx\sum_{j=0}^{2n-1}R_j^{(n)}(t)f(\varsigma_j^{(n)}),
\\
\frac{1}{2\pi}\int_0^{2\pi}\cot\frac{\varsigma-t}{2}f(\varsigma)\,\mathrm{d}
\varsigma&\approx\sum_{j=0}^{2n-1}U_j^{(n)}(t)f(\varsigma_j^{(n)}),
\\
\int_0^{2\pi}\ln\Big(4\sin^2\frac{t-\varsigma}{2}
\Big)\sin(t-\varsigma)f(\varsigma)\,\mathrm{d}\varsigma&\approx\sum_{j=0}^{2n-1}
V_j^{(n)}(t)f(\varsigma_j^{(n)}),
\end{split}
\end{align}
where the quadrature weights are given by
\begin{align*}
R_j^{(n)}(t)&=-\frac{2\pi}{n}\sum_{m=1}^{n-1}\frac{1}{m}\cos\Big[m(t-\varsigma_j^{(n)})\Big]
-\frac{\pi}{n^2}\cos\Big[n(t-\varsigma_j^{(n)})\Big], \\
U_j^{(n)}(t)&=\frac{1}{2n}\big[1-\cos n(\varsigma_j^{(n)}-t)\big]\cot\frac{\varsigma_j^{(n)}-t}{2}, \\
V_j^{(n)}(t)&=-\frac{\pi}{2n}\sin(\varsigma_j^{(n)}-t)+\frac{2\pi}{n}\sum_{m=2}^{n-1}\frac{\sin\Big[m(\varsigma_j^{(n)}-t)\Big]}{m^2-1}+\frac{2\pi\sin\Big[n(\varsigma_j^{(n)}-t)\Big]}{n(n^2-1)}.
\end{align*}
Here, the weight $V_j^{(n)}$ is calculated by using \cite[Lemma
8.23]{Kress2014-book} and we also refer to \cite{Kress2014-book} for the weights
$R_j^{(n)}$ and $U_j^{(n)}$. On the other hand, the last items of
$\mathcal{H}_{1,n}$ and $\mathcal{H}_{2,n}$ can be offset by the following item 
$$
\int_0^{2\pi}\mathcal{P}_n \Bigg\{\left[\begin{array}{cc}
0& \widetilde{h}_3^{\mathfrak s}(t,\cdot)-h_3^{\mathfrak s}(t,\cdot) \\ 
\widetilde{h}_3^{\mathfrak p}(t,\cdot)-h_3^{\mathfrak p}(t,\cdot)& 0
\end{array}
\right]\chi\Bigg\}(\varsigma)\,\mathrm{d}\varsigma
$$ 
in $\mathcal{B}_{2,n}$. Thus, the equation \eqref{transformfull} becomes
\begin{align}\label{numfull}
\begin{split}
w_{1,i}^{(n)} &= -\varphi^{(n)}_{1,i}+
\sum_{j=0}^{2n-1}X_{ij,{\mathfrak
p}}^{(n)}\varphi_{1,j}^{(n)}+\sum_{j=0}^{2n-1}Y_{ij,{\mathfrak
s}}^{(n)}\varphi_{2,j}^{(n)},\\
w_{2,i}^{(n)} &= \varphi^{(n)}_{2,i}+
\sum_{j=0}^{2n-1}Y_{ij,{\mathfrak
p}}^{(n)}\varphi_{1,j}^{(n)}-\sum_{j=0}^{2n-1}X_{ij,{\mathfrak
s}}^{(n)}\varphi_{2,j}^{(n)},
\end{split}
\end{align}
where $w_{l,i}^{(n)}=w_l(\varsigma_i^{(n)}),
\varphi_{l,i}^{(n)}=\varphi_l(\varsigma_i^{(n)})$ for $i,j=0,\cdots,2n-1$, $l=1,
2$, and
\begin{align*}
X_{ij,\sigma}^{(n)}&=R_j^{(n)}(\varsigma_i^{(n)})k_1^\sigma(\varsigma_i^{(n)},
\varsigma_j^{(n)})+\frac{\pi}{n}k_2^\sigma(\varsigma_i^{(n)},\varsigma_j^{(n)}),
\\
Y_{ij,\sigma}^{(n)}&=U_j^{(n)}(\varsigma_i^{(n)})
+\frac{\kappa_{\sigma}^2}{4\pi}|z'(\varsigma_i^{(n)})|^2V_j^{(n)}(\varsigma_i^{(n)})+
R_j^{(n)}(\varsigma_i^{(n)})\widetilde{h}_2^\sigma(\varsigma_i^{(n)},\varsigma_j^{(n)})\\
&\quad+\frac{\pi}{n}h_3^\sigma(\varsigma_i^{(n)},\varsigma_j^{(n)})+\frac{\pi}{
n } \widetilde{h}_1(\varsigma_i^{(n)},\varsigma_j^{(n)}).
\end{align*}

\begin{remark}\label{reduceY}
Moreover, a straightforward calculation yields
$$
V_j^{(n)}(t)-R_j^{(n)}(t)\sin(t-\varsigma_j^{(n)})=\frac{\pi\sin n(t-\varsigma_j^{(n)})}{n(n+1)}+\frac{\pi}{n^2}\sin n(t-\varsigma_j^{(n)})\cos(t-\varsigma_j^{(n)}),
$$
which implies $V_j^{(n)}(\varsigma_i^{(n)})-R_j^{(n)}(\varsigma_i^{(n)})\sin(\varsigma_i^{(n)}-\varsigma_j^{(n)})=0$. Therefore, $Y_{ij,\sigma}^{(n)}$ can be reduced to
$$
Y_{ij,\sigma}^{(n)}=U_j^{(n)}(\varsigma_i^{(n)})
+R_j^{(n)}(\varsigma_i^{(n)})h_2^\sigma(\varsigma_i^{(n)},\varsigma_j^{(n)})+\frac{\pi}{n}h_3^\sigma(\varsigma_i^{(n)},\varsigma_j^{(n)})+\frac{\pi}{n}\widetilde{h}_1(\varsigma_i^{(n)},\varsigma_j^{(n)}).
$$
From this, we find that $E^\sigma H_2\varphi$ in \eqref{decomposition} is only
used for the theoretical analysis. 
\end{remark}

\begin{remark}
From \cite[Section 4]{KirschRitter1999}, we know that the trapezoidal rule and
the quadrature formulas \eqref{quadrature} yield convergence of exponential
order for periodic analytic function $f$. In addition, from \cite[Theorem
11.7]{Kress2014-book}, we conclude exponential convergence of our method if the
boundary of obstacle and the exact solution are analytic.
\end{remark}

\begin{table} 
\caption{Parametrization of the exact boundary curves.}
\label{boundary}
\begin{tabular}{lll}
\toprule[1pt]
Type           &Parametrization\\
\midrule  
Apple-shaped   & $z(t)=\displaystyle\frac{0.55(1+0.9\cos{t}+0.1\sin{2t})}{1+0.75\cos{t}}(\cos{t}, \sin{t}), \quad t\in [0,2\pi]$  \vspace{1.5ex} \\
Peach-shaped  & 
$z(t)=0.22(\cos^2{t}\sqrt{1-\sin{t}}+2)(\cos{t}, \sin{t}), \quad t\in[0,2\pi]$
\vspace{1ex}\\ 
Drop-shaped  & 
$\displaystyle z(t)=(2\sin{\frac{t}{2}}-1, -\sin{t}), \quad t\in[0,2\pi]$ \vspace{1ex}\\
Heart-shaped  & 
$\displaystyle z(t)=(\frac{3}{2}\sin{\frac{3t}{2}}, \sin{t}), \quad t\in[0,2\pi]$\\
\bottomrule[1pt]
\end{tabular}
\end{table}

\begin{table}
\centering 
\caption{Numerical errors for the apple-shaped and peach-shaped obstacles with
$\omega=\pi$.} 
\label{numerror2} 
\begin{tabular}{c|c|c|c|c}  
\toprule[1pt]
& \multicolumn{2}{c|}{Apple-shaped} & \multicolumn{2}{c}{Peach-shaped}  \\ 
\cline{2-5}
$n$&$\|\phi_*-\phi^{(n)}\|_{L^2}$    &$\|\psi_*-\psi^{(n)}\|_{L^2}$
&$\|\phi_*-\phi^{(n)}\|_{L^2}$ &$\|\psi_*-\psi^{(n)}\|_{L^2}$ \\
\hline
8& 0.0677 & 0.0613 &0.0044&0.0050\\
16&2.1192e-04&1.6939e-04&3.9734e-04&4.5298e-04 \\
32&3.7880e-07&3.0432e-07&5.4337e-05&6.1341e-05 \\
64&6.6341e-12&5.2998e-12&6.9918e-06&7.8543e-06 \\
128&1.6200e-15&1.6162e-15&8.8584e-07&9.9062e-07 \\
256&1.9389e-15&2.2955e-15&1.1140e-07&1.2420e-07 \\
512&3.1540e-15&2.9617e-15&1.3962e-08&1.5539e-08 \\
1024&4.2380e-15&3.7504e-15&1.7475e-09&1.9429e-09 \\
\bottomrule[1pt] 
\end{tabular}
\end{table}

\begin{table}
\centering 
\caption{Numerical errors for the apple-shaped and peach-shaped obstacles with
$\omega=100\pi$.} 
\label{numerror3} 
\begin{tabular}{c|c|c|c|c} 
\toprule[1pt]	 
& \multicolumn{2}{c|}{Apple-shaped} & \multicolumn{2}{c}{Peach-shaped}  \\ 
\cline{2-5}
$n$&$\|\phi_*-\phi^{(n)}\|_{L^2}$    &$\|\psi_*-\psi^{(n)}\|_{L^2}$
&$\|\phi_*-\phi^{(n)}\|_{L^2}$ &$\|\psi_*-\psi^{(n)}\|_{L^2}$  \\  
\hline
64&2.2192&1.1012&5.9298&2.5847 \\
128&7.2908e-02&9.2983e-02&1.0250e-01&1.0459e-01 \\
256&5.5220e-07&1.0522e-06&4.1347e-07&9.4716e-07 \\
512&6.0630e-13&4.4848e-13&5.0089e-08&3.6659e-08 \\
1024&5.3276e-13&3.7980e-13&6.2029e-09&4.3889e-09 \\
2048&4.8503e-13&4.0631e-13&7.7281e-10&5.3866e-10 \\
4096&5.3277e-13&3.9049e-13&9.6473e-11&6.6741e-11 \\
\bottomrule[1pt]
\end{tabular}
\end{table}

\begin{figure}
\centering 
\subfigure[$\Re(\phi)$]
{\includegraphics[width=0.45\textwidth]{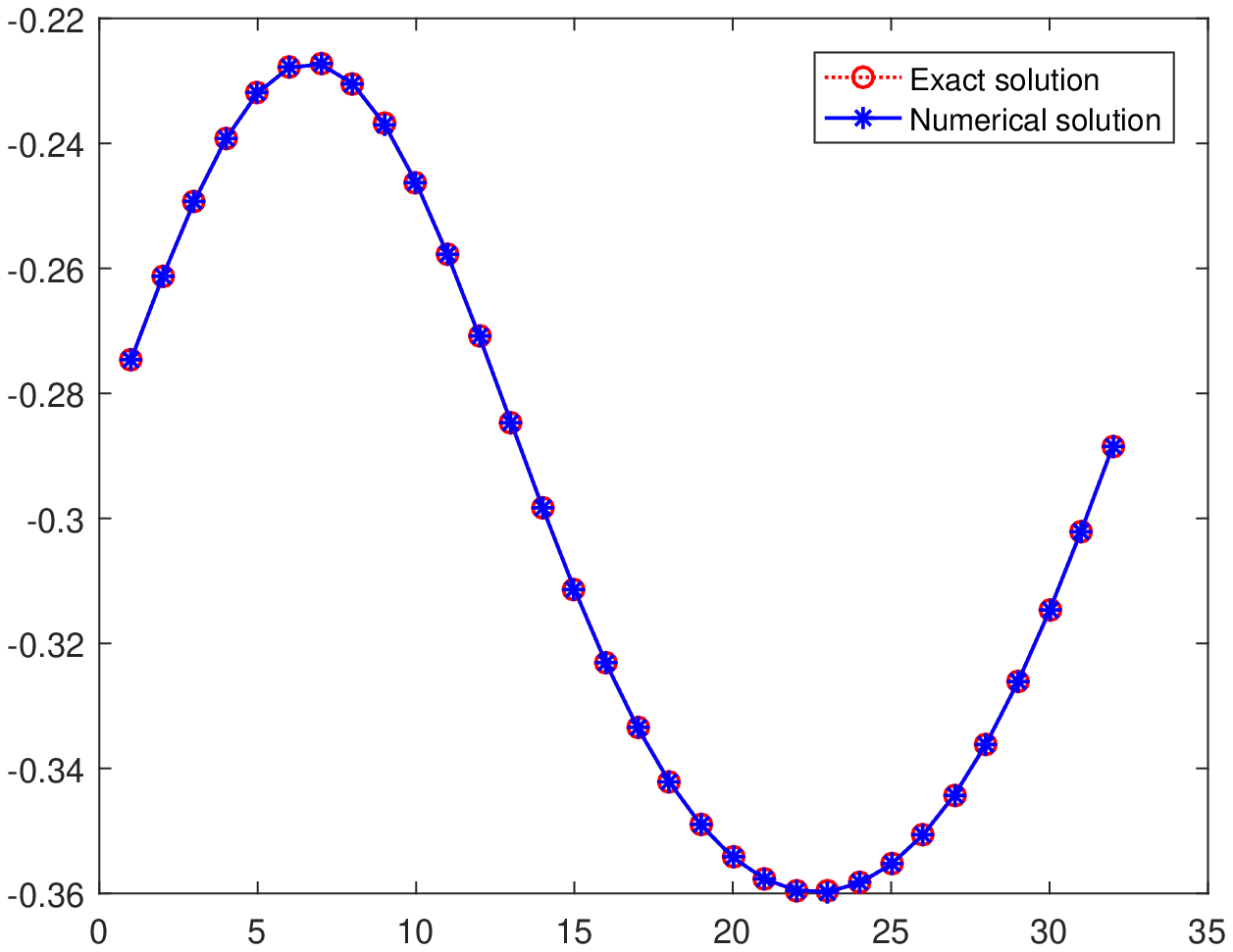}}
\subfigure[$\Im(\phi)$]
{\includegraphics[width=0.45\textwidth]{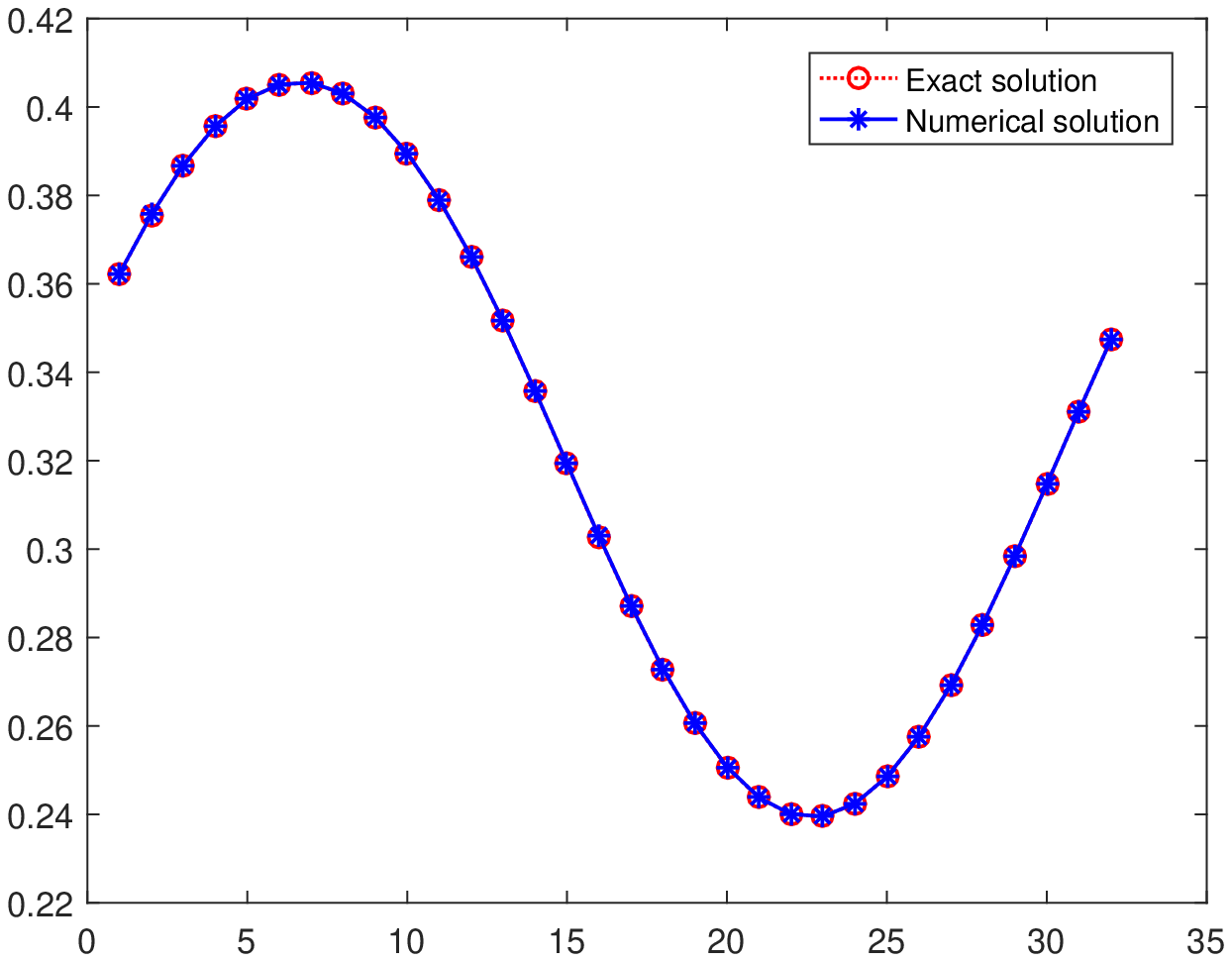}} 
\subfigure[$\Re(\psi)$]
{\includegraphics[width=0.45\textwidth]{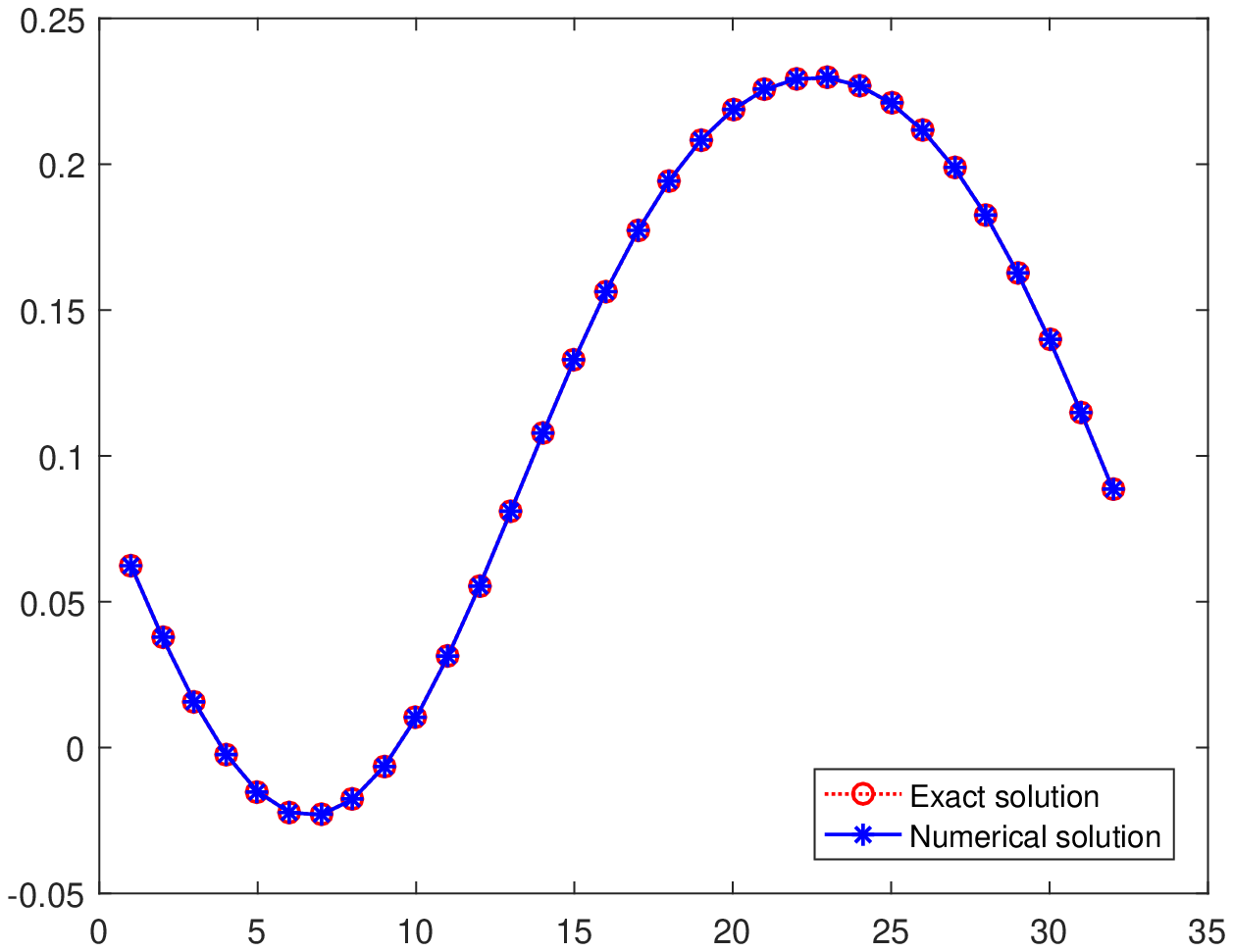}}
\subfigure[$\Im(\psi)$]
{\includegraphics[width=0.45\textwidth]{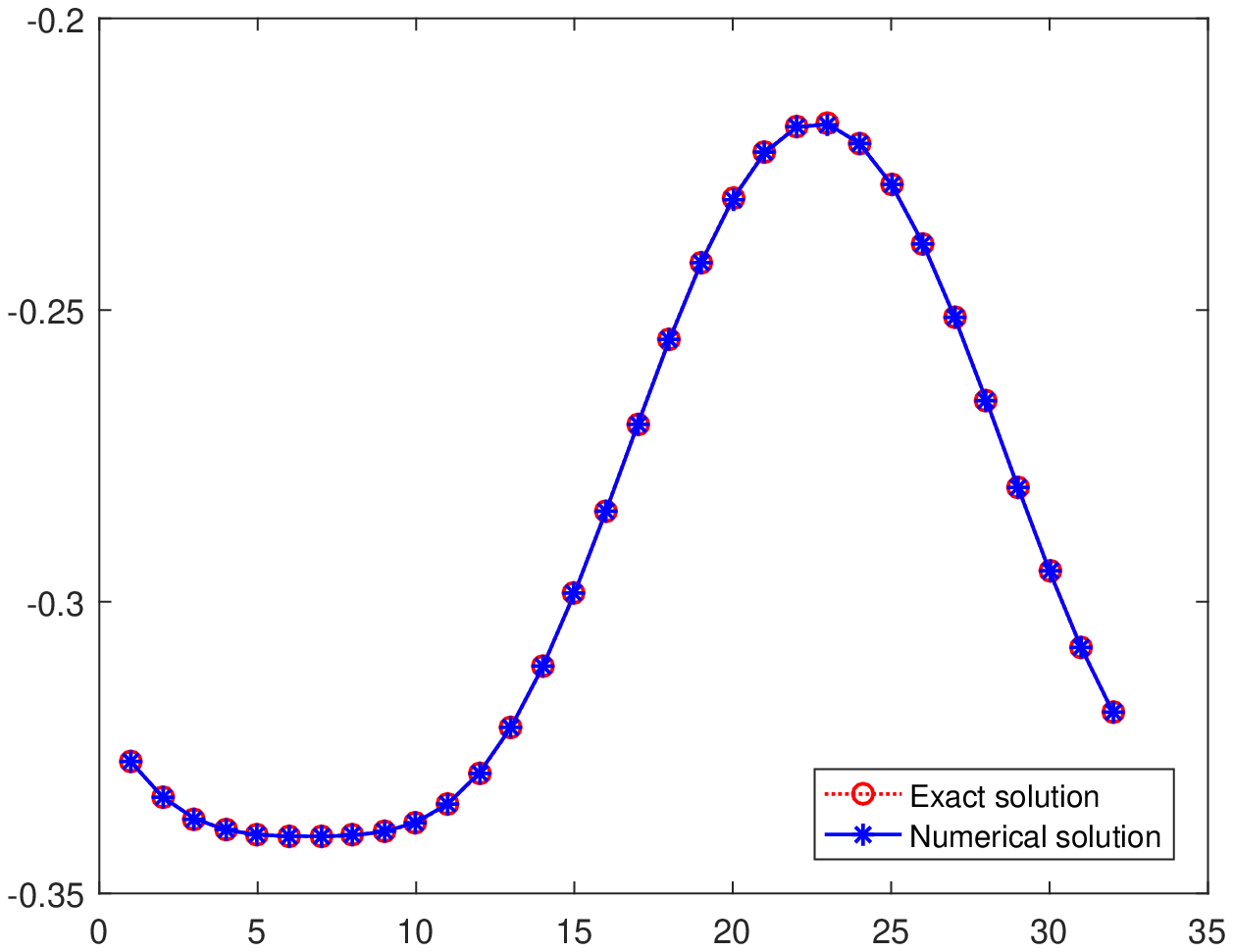}}
\caption{Numerical solutions and corresponding exact solutions for the
apple-shaped obstacle with $\omega=\pi, n=\tilde{n}=16$.}\label{solutions1pi}
\end{figure}

\begin{figure}
\centering 
\subfigure[$\Re(\phi)$]
{\includegraphics[width=0.45\textwidth]{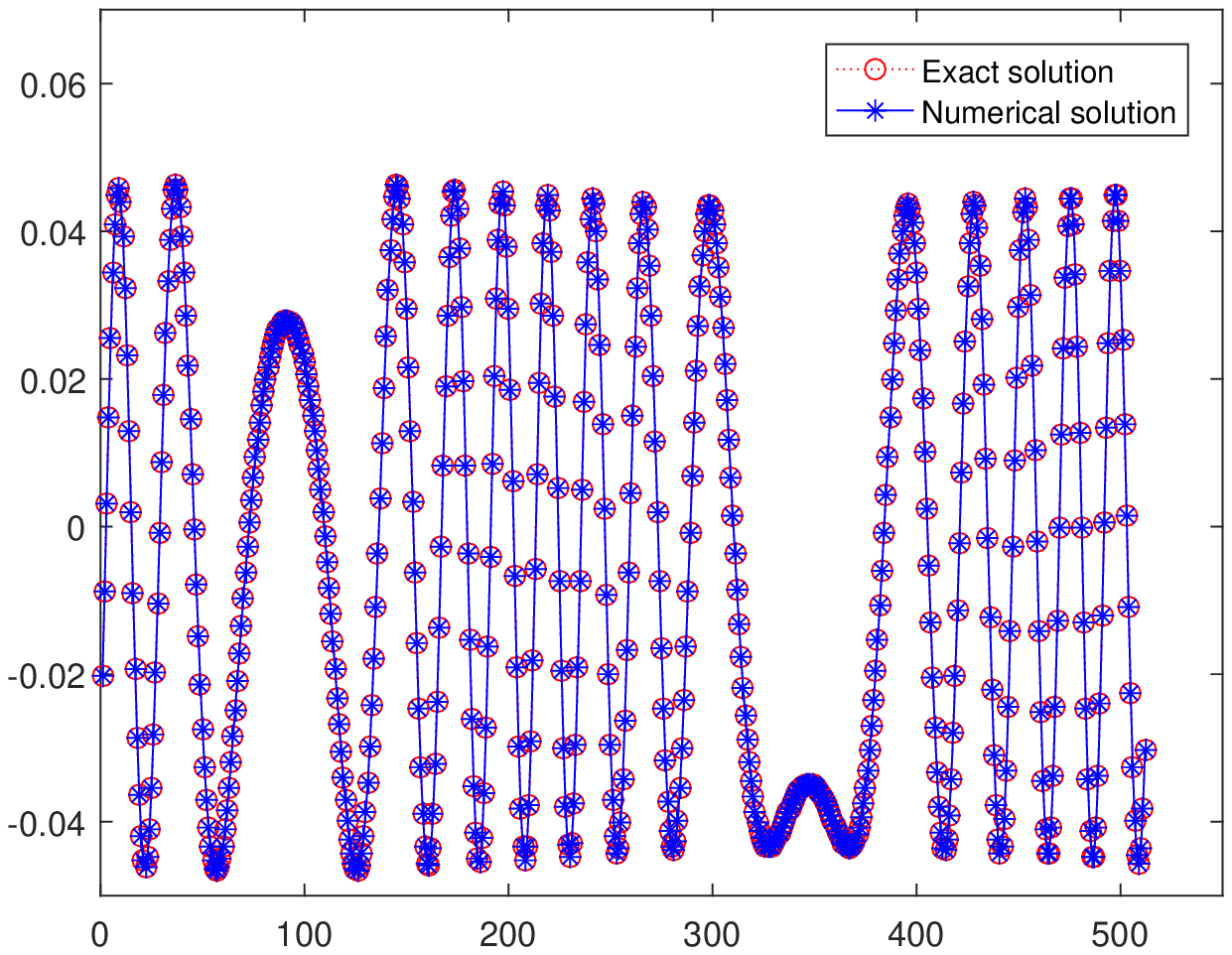}}
\subfigure[$\Im(\phi)$]
{\includegraphics[width=0.45\textwidth]{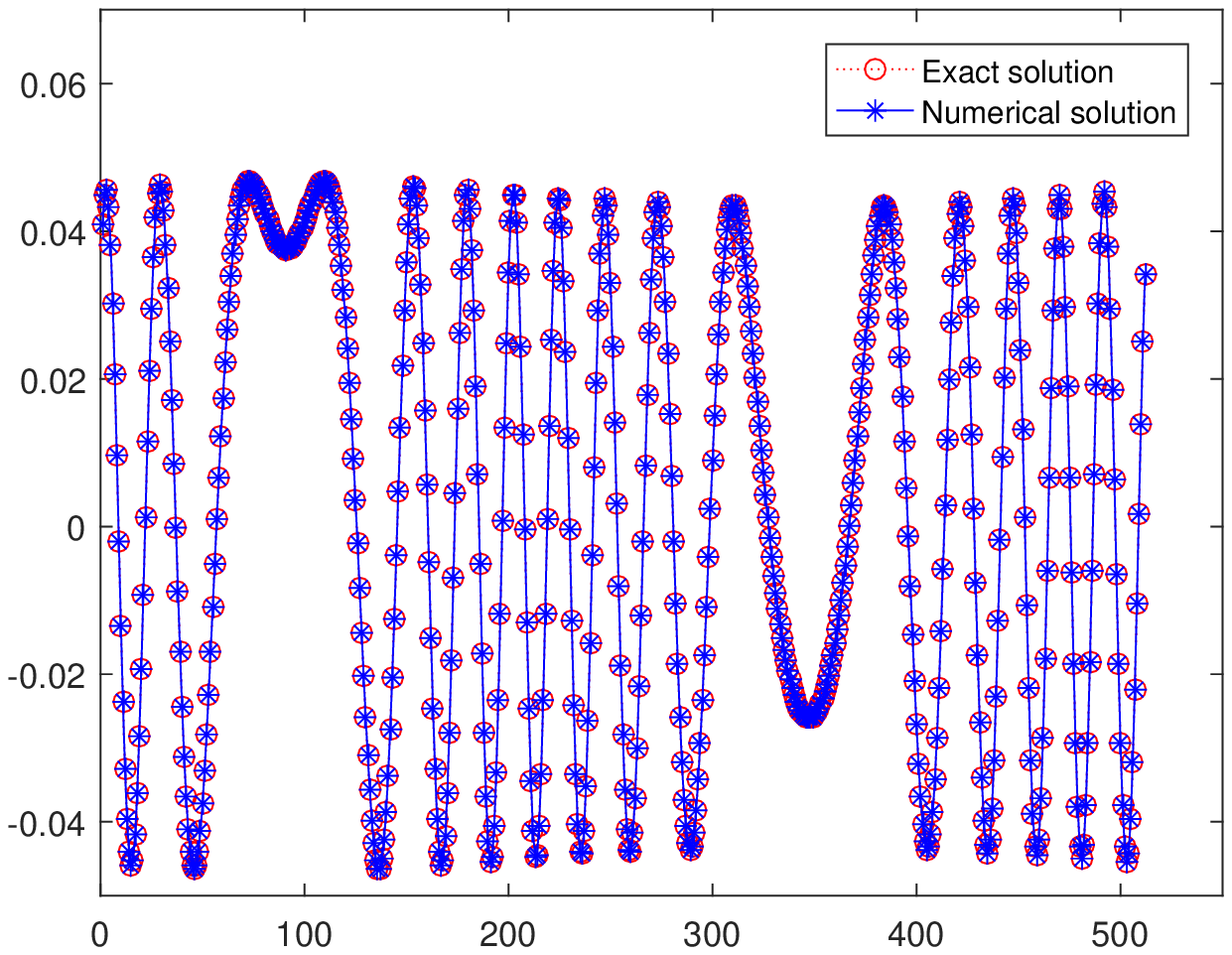}} 
\subfigure[$\Re(\psi)$]
{\includegraphics[width=0.45\textwidth]{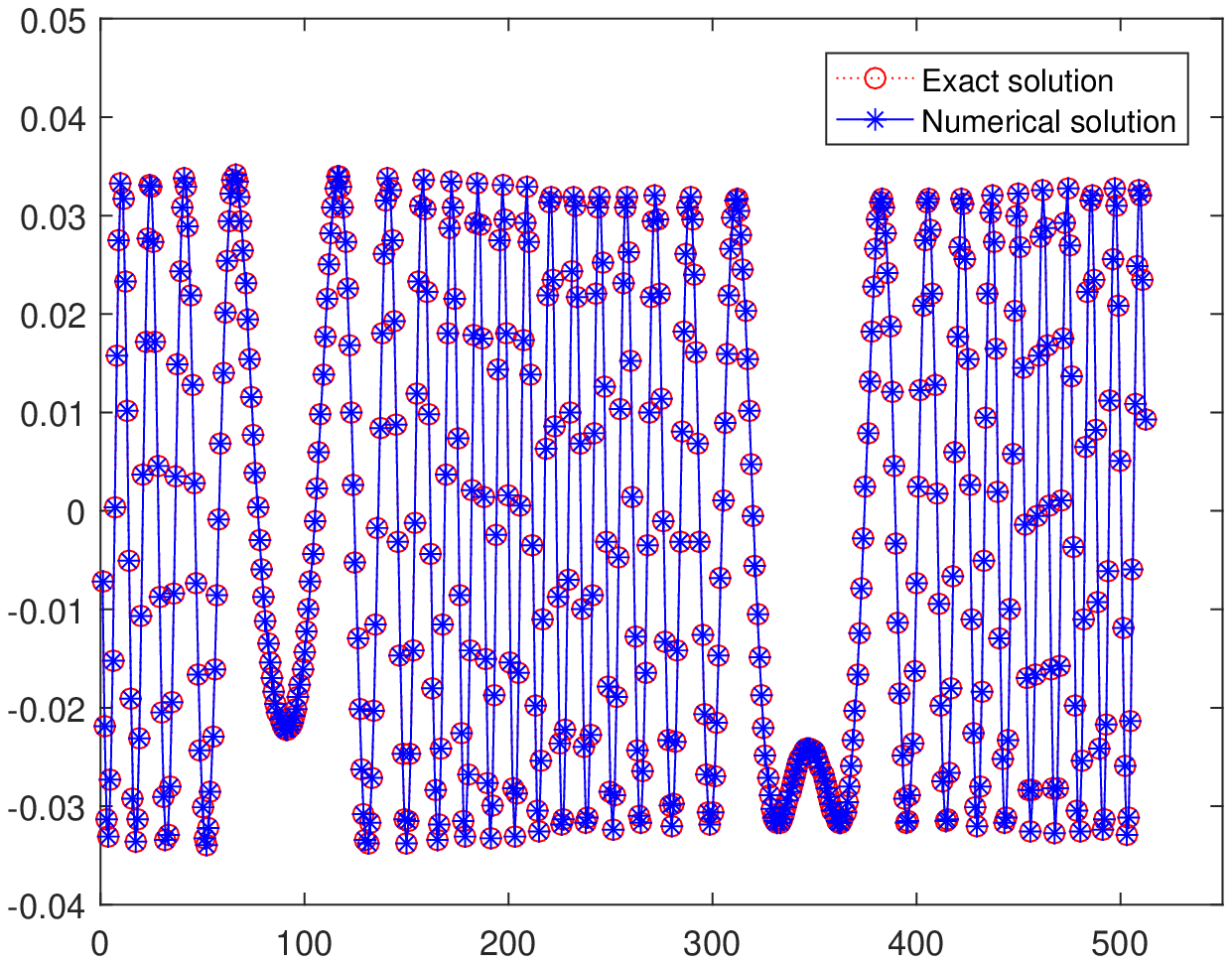}}
\subfigure[$\Im(\psi)$]
{\includegraphics[width=0.45\textwidth]{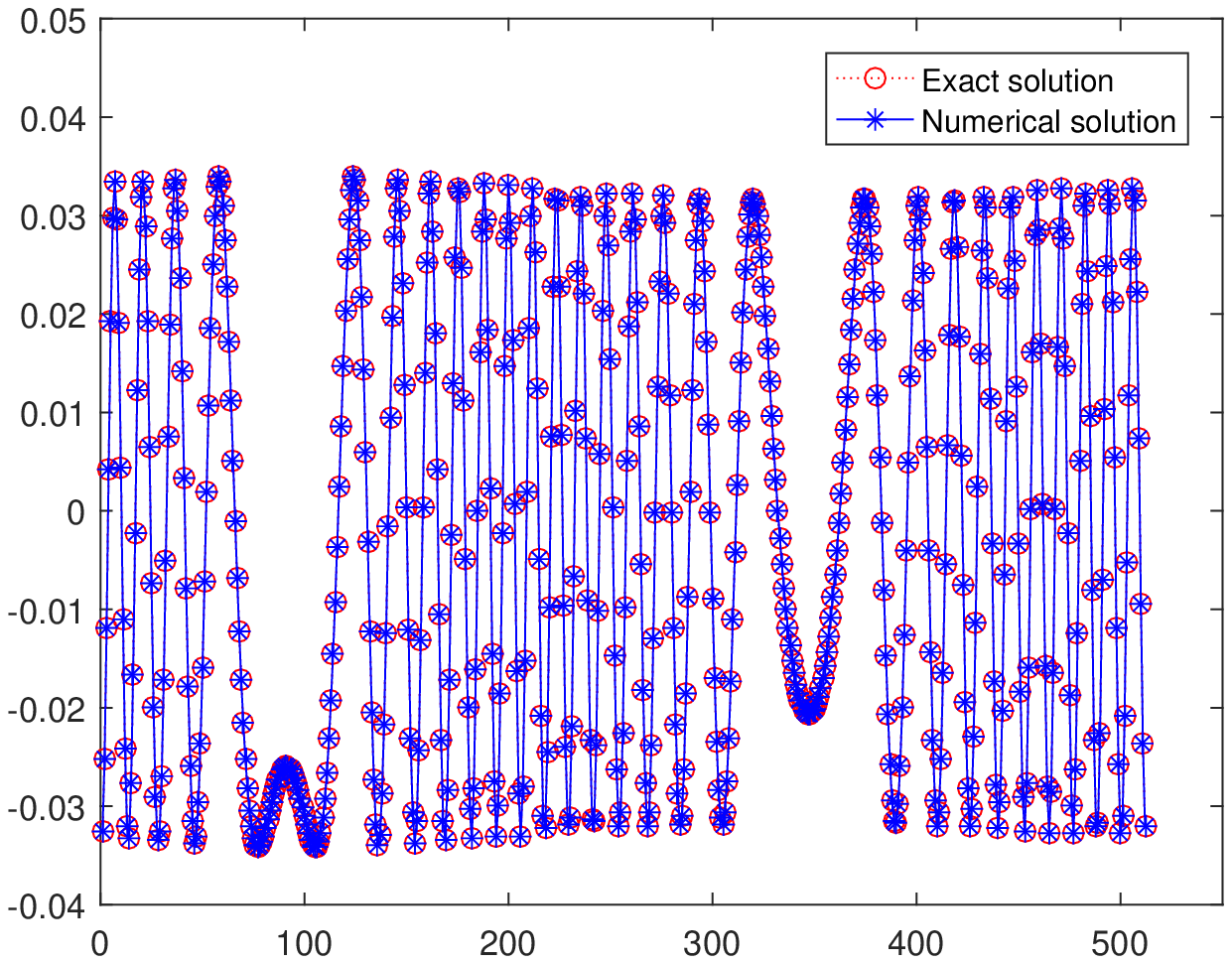}}
\caption{Numerical solutions and corresponding exact solutions for
the apple-shaped obstacle with $\omega=100\pi,
n=\tilde{n}=256$.}\label{solutions100pi}
\end{figure}

\subsection{Numerical examples: smooth obstacles}

In this subsection, we consider the elastic scattering by an apple-shaped and a
peach-shaped obstacle with analytic and $\mathcal{C}^2$ boundary, respectively,
The parametrizations of these two boundary curves are given in Table
\ref{boundary}. To test the accuracy of the trigonometric collocation method, we
construct an exact solution by letting the exterior field of the elastic
obstacle be generated by two point sources located at 
$\bar{x}=(0.1,0.2)^\top\in D$, i.e.,
\begin{align}\label{exact solution}
\phi_*(x)=H_0^{(1)}(\kappa_{\mathfrak p}|x-\bar{x}|),\quad \psi_*(x)=H_0^{(1)}(\kappa_{\mathfrak s}|x-\bar{x}|),\quad x\in\mathbb{R}^2\setminus\overline{D}.
\end{align}
Due to the uniqueness of the boundary value problem \eqref{HelmholtzDec}, the
solution can be constructed explicitly by enforcing the following boundary
conditions on $\Gamma_D$:
\begin{align*}
f_1=\partial_\nu\phi_*+\partial_\tau\psi_*,\qquad
f_2=\partial_\tau\phi_*-\partial_\nu\psi_*. 
\end{align*}
In numerical experiments, we take the Lam\'{e} parameters $\lambda=3.88,
\mu=2.56$ and let the observation points be generated by
$\{\varsigma_i^{(n)}\}_{i=0}^{2\tilde{n}-1}$, $\tilde{n}=16$ are distributed on
a circle $\partial B=\{x\in\mathbb{R}^2: |x|=3\}$. We list the numerical errors
between the numerical solution and the corresponding exact solution with
$L^2(\partial B)$ norm in Tables \ref{numerror2} and \ref{numerror3} for
the angular frequency $\omega=\pi$ and $\omega=100\pi$, respectively. It can be
easily seen from the results that the accuracy is improved dramatically as the
number of collocation points are increased. In fact, our method has an
exponential convergence which confirms the theoretical analysis. We also find
that the convergence rate of the apple-shaped obstacle with analytic boundary is
faster than that of the peach-shaped obstacle with $\mathcal{C}^2$ boundary. The
numerical solution and the corresponding exact solution are shown in Figure
\ref{solutions1pi} for the apple-shaped obstacle. Clearly they coincide
perfectly with $\omega=\pi, n=16$.

For the high-frequency case, we can get the same highly accurate results as
those of the low-frequency case by increasing the number of interpolation
points. The numerical solution and the corresponding exact solution are shown in
Figure \ref{solutions100pi} for the apple-shaped obstacle with $\omega=100\pi$.
As can be seen, the numerical solutions and the exact solutions also coincide
perfectly when $n=\tilde{n}=256$.

It is worth mentioning that for a given incident wave and elastic obstacle, in
view of \eqref{behaviour relation} and \eqref{farfield}, together with
\eqref{numfull}, we can get the compressional and shear far-field patterns
immediately by using the trapezoidal rule. With the aid of \eqref{HelmDeco},
\eqref{singlelayer} and \eqref{numfull}, noting $\boldsymbol v_{\mathfrak
p}=\nabla\phi$ and $\boldsymbol v_{\mathfrak s}=\boldsymbol{\rm curl}\psi$, we
can also easily obtain the compressional and shear elastic scattered fields by
using the trapezoidal rule, too, if the test points are not too close to the
boundary.

\begin{figure}
\centering 
\subfigure[$p=2, n=16$]
{\includegraphics[width=0.45\textwidth]{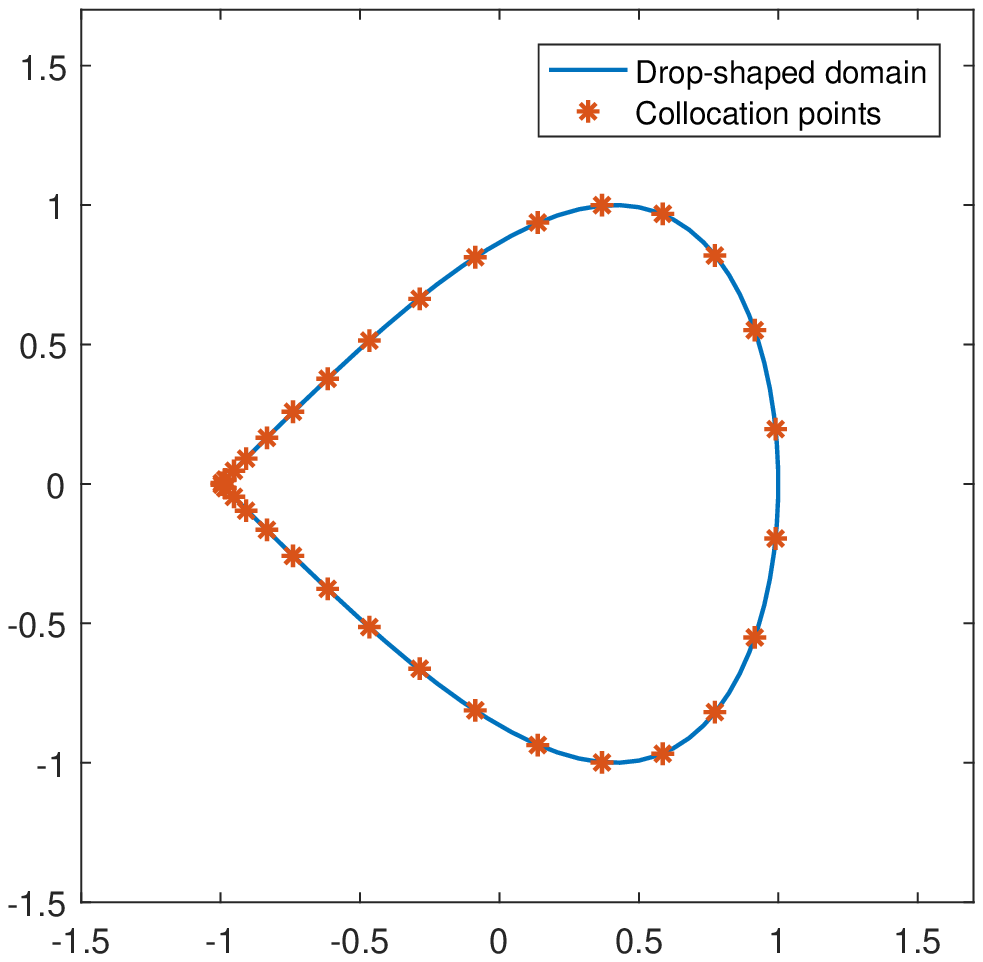}}
\subfigure[$p=2, n=16$]
{\includegraphics[width=0.45\textwidth]{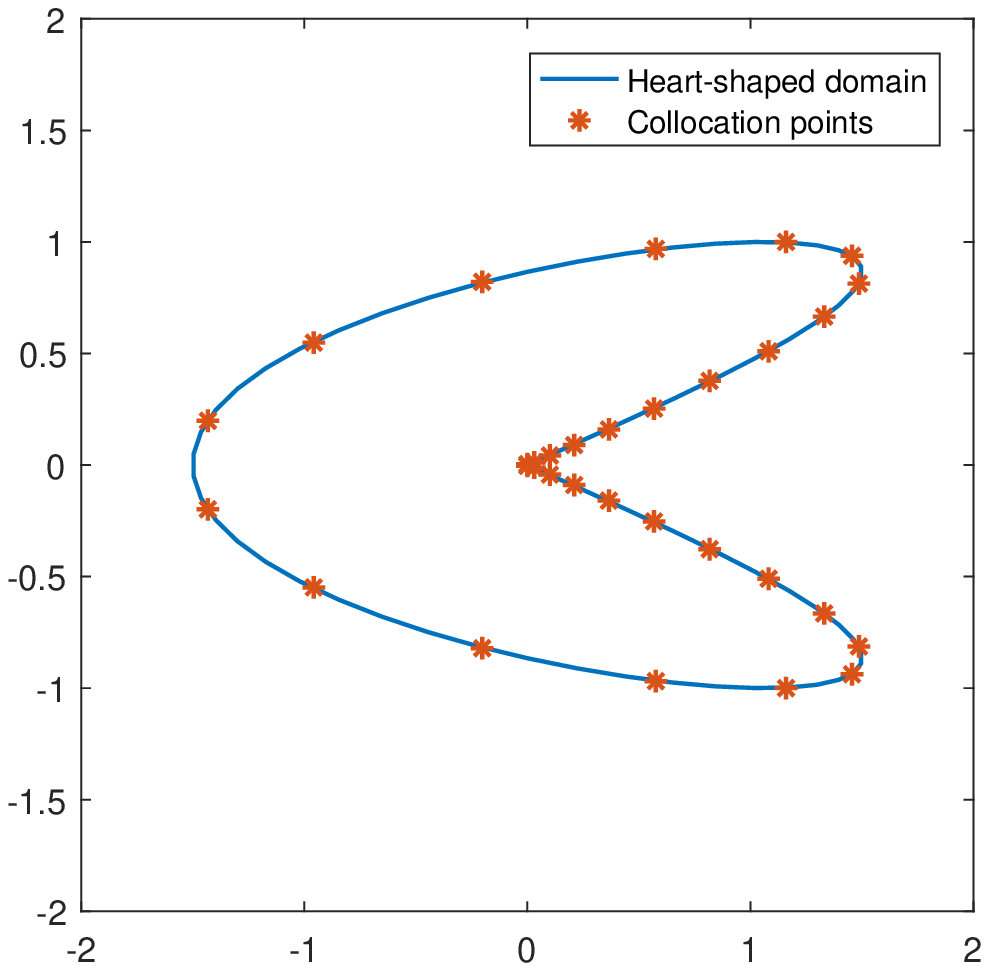}} 
\caption{Collocation points on the drop-shaped and heart-shaped
obstacles.}\label{points}
\end{figure}

\begin{table}
\centering 
\caption{Numerical errors for the drop-shaped domain with $\omega=\pi$.} 
\label{numerror6} 
\begin{tabular}{c|c|c|c|c}  
\toprule[1pt]
& \multicolumn{2}{c|}{Point source} & \multicolumn{2}{c}{Plane wave}  \\ 
\cline{2-5}
$n$&$\|\phi_*-\phi^{(n)}\|_{L^2}$    &$\|\psi_*-\psi^{(n)}\|_{L^2}$
&$\|\phi^{(n_*)}-\phi^{(n)}\|_{L^2}$ &$\|\psi^{(n_*)}-\psi^{(n)}\|_{L^2}$ \\
\hline
16&2.0999e-03&2.1650e-03&4.5396e-01&5.8469e-01 \\
32&2.4347e-08&3.1562e-08&3.9571e-03&4.9919e-03 \\
64&1.2669e-08&1.6911e-08&2.7795e-04&3.7244e-04 \\
128&3.8572e-10&5.1477e-10&2.4339e-05&2.8050e-05 \\
256&1.1736e-11&1.5791e-11&1.1917e-04&1.6019e-04 \\
512&1.3430e-14&1.7613e-14&9.2866e-06&1.2593e-05 \\
1024&6.2155e-15&4.9608e-15&5.5606e-06&7.5428e-06 \\
2048&6.1235e-15&5.9362e-15&1.1380e-07&1.5203e-07 \\
\bottomrule[1pt] 
\end{tabular}
\end{table}

\begin{table}
\centering 
\caption{Numerical errors for the heart-shaped domain with $\omega=\pi$.} 
\label{numerror7} 
\begin{tabular}{c|c|c|c|c}  
\toprule[1pt]
& \multicolumn{2}{c|}{Point source} & \multicolumn{2}{c}{Plane wave}  \\ 
\cline{2-5}
$n$&$\|\phi_*-\phi^{(n)}\|_{L^2}$    &$\|\psi_*-\psi^{(n)}\|_{L^2}$
&$\|\phi^{(n_*)}-\phi^{(n)}\|_{L^2}$ &$\|\psi^{(n_*)}-\psi^{(n)}\|_{L^2}$ \\
\hline
16&4.3673e-02&1.0523e-01&6.0929e-03&1.1896e-02 \\
32&6.0075e-04&1.5144e-03&2.5014e-05&3.7337e-05 \\
64&4.3721e-07&8.4011e-07&1.2432e-07&1.2820e-07 \\
128&1.8692e-09&1.2039e-09&2.2355e-09&1.4398e-09 \\
256&1.1752e-11&7.5236e-12&1.3571e-11&8.6880e-12 \\
512&1.6306e-13&1.0433e-13&1.8443e-13&1.1843e-13 \\
1024&4.5946e-15&4.0851e-15&8.0003e-15&5.9002e-15 \\
2048&1.1742e-14&1.3828e-14&9.6079e-15& 1.0804e-14 \\
\bottomrule[1pt] 
\end{tabular}
\end{table}

\subsection{Numerical examples: nonsmooth obstacles}

In this subsection, we assume that $D$ has a single corner at $x_0$ and assume
$\Gamma_D\setminus \{x_0\}$ to be analytic. The angle $\gamma$ at the corner is
supposed to satisfy $0<\gamma<2\pi$. Suppose that the corner point $x_0$
corresponds to the parameter $t=0$ in the parametric representation of
$\Gamma_D$. To test the accuracy of our method, we adopt the exact solutions in
form of \eqref{exact solution} with the point source located
at $\bar{x}=(0.1,0.2)^\top$ and $\bar{x}=(-0.5,0.2)^\top$ for the drop-shaped
and heart-shaped obstacles, respectively. The interior angles are
$\gamma=\pi/2$ and $\gamma=3\pi/2$ for the drop-shaped
and heart-shaped obstacles, respectively. The parameterizations of
these two boundary curves are also shown in Table \ref{boundary}. In addition,
we consider the case that the obstacle is illuminated by a compressional plane
wave $\boldsymbol{u}^{\rm inc}$ which is given by
\[
\boldsymbol{u}^{\rm inc}(x)=d \mathrm{e}^{{\rm i} \kappa_{\mathfrak p}d\cdot x},
\]
where $d=(\cos\theta, \sin\theta)^\top$ is the unit propagation direction
vector. 

To resolve the field near the corner, we adopt the graded mesh by taking 
the substitution \cite{DR-shu2,Kress1990} $t=w(s)$, which is given by
\begin{align*}
w(s)=2\pi\frac{[v(s)]^p}{[v(s)]^p+[v(2\pi-s)]^p}, \qquad 0\leq s\leq 2\pi,
\end{align*}
where
\begin{align*}
v(s)=\Big(\frac{1}{p}-\frac{1}{2}\Big)\Big(\frac{\pi-s}{\pi}\Big)^3+\frac{1}{p}
\frac{s-\pi}{\pi}+\frac{1}{2}, \qquad p\geq2,
\end{align*}
and is applied to the parametric curve of the drop-shaped and heart-shaped
obstacles. In experiments, we choose $s_j:=\pi j/n+\pi/(2n)$ as the
collocation points in \eqref{numfull}. The generated points $w(s_j)$,
$j=0,\cdots,2n-1$ of the graded mesh on the both boundaries are presented in
Figure \ref{points} for $p=2$.

The numerical errors between the numerical solution and the exact solution
\eqref{exact solution} with $L^2(\partial B)$ norm for the drop-shaped and
heart-shaped obstacles are listed in Tables \ref{numerror6} and \ref{numerror7}
with the angular frequency $\omega=\pi$ and $\tilde{n}=16$. 
Additionally, we calculate the values of compressional and shear scattered
fields $\phi^{(n_*)}, \psi^{(n_*)}$ on $\partial B$ with $\tilde{n}=16,
n_*=4096$ by the incident plane wave with $\theta=\pi/6$, and compare them with
the cases of other numbers of collocation points. For the point source case, the
solver quickly converges to machine precision for both domains. This is due to
the analyticity of the artificial solution. On the other hand, for the true
scattering problem, i.e., the scattering problem of the plane wave incidence, we
note that the numerical error of the heart-shaped domain is better than that of
the drop-shaped domain. The reason is apparently related to the concavity of the
domain. Detailed analysis will be investigated in 
the future work.

\section{Conclusion}

We have proposed a novel boundary integral formulation and developed a highly
accurate numerical method for solving the time-harmonic elastic scattering from
a rigid bounded obstacle immersed in a homogeneous and isotropic elastic medium.
Using the Helmholtz decomposition, we reduce the scattering problem to a coupled
boundary integral equation with singular integral operators. By introducing an
appropriate reqularizer to the coupled system, we split the operator equation in
the form of an isomorphic operator plus a compact one. The convergence is shown
for both the semi-discrete and full-discrete schemes via the trigonometric
collocation method. Numerical experiments for smooth and nonsmooth obstacles,
especially for the obstacles with corners, are presented to demonstrate the
superior performance of the proposed method. Along this line, we intend to
extend the current work to the coupled fluid-solid scattering problem and the
three-dimensional elastic obstacle scattering problem, where the more
complicated model equations need to be considered.

\end{document}